\numberwithin{equation}{section}
\newtheorem{theorem}{Theorem}[section]
\newtheorem{proposition}[theorem]{Proposition}
\newtheorem{corollary}[theorem]{Corollary}
\newtheorem{lemma}[theorem]{Lemma}
\theoremstyle{definition}
\newtheorem{definition}[theorem]{Definition}
\theoremstyle{remark}
\newtheorem{remark}[theorem]{Remark}
\begin{document}

\title[ Nonscattering solutions
 ]
{
   Nonscattering  solutions to the
    $L^{2}$ -supercritical
    NLS Equations
 }

 \author{Qing Guo
}

\address{Academy of Mathematics and Systems Science, Chinese Academy of Sciences, Beijing 100190, P.R. China}

\email{guoqing@amss.ac.cn}





\begin{abstract}
We investigate the nonlinear Schr\"{o}dinger equation
 $iu_{t}+\Delta u+|u|^{p-1}u=0$ with $1+\frac{4}{N}<p<1+\frac{4}{N-2}$ (when $N=1, 2$, $1+\frac{4}{N}<p<\infty$ )
 in energy space $H^1$
and study the divergent property of infinite-variance and nonradial
  solutions.
If $M(u)^{\frac{1-s_{c}}{s_{c}}}E(u)<M(Q)^{\frac{1-s_{c}}{s_{c}}}E(Q)$ and
 $\|u_{0}\|_{2}^{\frac{1-s_{c}}{s_{c}}}\|\nabla u_{0}\|_{2}>\|Q\|_{2}^{\frac{1-s_{c}}{s_{c}}}\|\nabla Q\|_{2},$
then either  $u(t)$~blows up in finite forward time,  or $u(t)$ exists globally for positive time and there exists a
time sequence $t_{n}\rightarrow+\infty$ such that  $\|\nabla u(t_{n})\|_{2}\rightarrow+\infty.$ Here $Q$ is the ground state
solution of $-Q+\Delta Q+|Q|^{p-1}Q=0.$
A similar result holds for negative time. This extend the result of
the 3D cubic Schr\"{o}dinger equation in \cite{holmer10}  to the general mass-supercritical and energy-subcritical case .

\end{abstract}

\maketitle
MSC: 35Q55, 35A15,
35B30.\\

Keywords: Nonlinear Schr\"{o}dinger equation;  Blow-up solution; Infinite variance;
Mass-supercritical; Energy-subcritical

\section{Introduction}

We consider the following Cauchy problem of a nonlinear Schr\"{o}dinger equation 

\begin{equation}\label{1.1}
\left\{ \begin{aligned}
         \ iu_{t}+\Delta u+|u|^{p-1}u&=0,\ \ \ (x,t)\in R^{N}\times R, \\
                  \ u(x,0)&=u_{0}(x)\in H^{1}(R^{N}).
                          \end{aligned}\right.
                          \end{equation}

It is well known from Ginibre and Velo ~\cite{JG1}~that, equation~\eqref{1.1}~is locally well-posed in~$ H^{1}.$~
That is for ~$u_{0}\in H^{1},$~there exist~$0<T\leq\infty$~and a unique solution~$u(t)\in C([0,T);H^{1})$~to  ~\eqref{1.1}.~
When ~$T=\infty,$~  we say that the solution is positively  global; while on the other hand,  we  have
~$\lim_{t\uparrow T}\|\nabla u(t)\|_{2}\rightarrow\infty$~  and call that this solution blows up in finite positive time.
Solutions of
~\eqref{1.1}~admits the following conservation laws in energy space  ~$H^{1}:$~
\begin{align*}
L^{2}-norm:\ \ \ \ M(u)(t)&\equiv \int|u(x,t)|^{2}dx=M(u_{0});\\
Energy:\ \ \ \ E(u)(t)&\equiv \frac{1}{2}\int|\nabla u(x,t)|^{2}dx-\frac{1}{p+1}\int|u(x,t)|^{p+1}dx=E(u_{0});\\
Momentum:\ \ \ \ P(u)(t)&\equiv Im\int\overline{u}(x,t)\nabla u(x,t)dx=P(u_{0}).
\end{align*}

Note that equation~\eqref{1.1}~is invariant under the scaling~$u(x,t)\rightarrow\lambda^\frac{2}{p-1}u(\lambda x,\lambda^2t)$~
which also leaves the homogeneous Sobolev norm~
$\dot{H}^{s_c}$ ~invariant with~ $s_c=\frac{N}{2}-\frac{2}{p-1}.$~
It is classical from the conservation of the energy and the~$L^{2}$~norm that for~ $s_c<0$,~the equation is subcritical
and all ~$ H^{1}$~solutions are global and ~$ H^{1}$~bounded. The smallest power for which blow up may occur is~$p=1+\frac{4}{N}$~
which is referred to as the~$L^{2}$~critical case  corresponding to~ $s_c=0$~\cite{RTG}~\cite{SK2}. The case~ $0<s_c<1$~is called the
~$L^{2}$~supercritical and ~$H^{1}$~subcritical or the Mass-supercritical and Energy-subcritical case. In fact, we are concerning in this paper with the case~ $0<s_c<1.$~

For the 3D cubic nonlinear Schr\"{o}dinger equation
with~ $s_c=\frac{1}{2}$~and~$p=3,$~there have been several results on  either scattering  or blow-up solutions.
In Holmer and Roudenko
~\cite{radial},~the authors proved that if ~$u_{0}\in H^{1}$~is radial, $M(u)E(u)<M(Q)E(Q)$
~and
~$\|\nabla u_{0}\|_{2}\|u_{0}\|_{2}<\|\nabla Q\|_{2}\|Q\|_{2},$~
then the solution ~$u(t)$~is globally well-posed and scattering; They further showed that
if~$M(u)E(u)<M(Q)E(Q)$
~and
~$\|\nabla u_{0}\|_{2}\|u_{0}\|_{2}>\|\nabla Q\|_{2}\|Q\|_{2},$~then the solution blows up in finite time,
provided that either the initial data has finite variance or is radial. The radial case is an extension of a result of
Ogawa and Tsutsumi~\cite{OT} ~who proved the case~$E(u)<0.$~Then in~\cite{nonradial},~also for the 3D cubic nonlinear Schr\"{o}dinger equation,
  the authors extended the
 scattering results on  radial~$ H^{1}$~solutions to the nonradial case . The technique employed is parallel to that
 employed by Kenig-Merle~\cite{km}~in their study of the energy-critical NLS. For  ~ $0<s_c<1$~, ~\cite{yuan}~
 have extended the scattering results to the general~$L^{2}$~supercritical and ~$H^{1}$~subcritical case.

Then in Holmer and Roudenko
~\cite{holmer10}, the authors further studied the blow-up theory for the 3D cubic nonlinear Schr\"{o}dinger equation , which
 dropped  the additional hypothesis of finite variance and radiality. More precisely, they proved that
 if~$M(u)E(u)<M(Q)E(Q)$
~and
~$\|\nabla u_{0}\|_{2}\|u_{0}\|_{2}>\|\nabla Q\|_{2}\|Q\|_{2},$~then either ~$u(t)$~blows up in finite positive time,
or~$u(t)$~exists globally for all positive time and there exists a time sequence~$t_{n}\rightarrow+\infty$~such that ~$\|\nabla u(t_{n})\|_{2}\rightarrow\infty,$~with  similar results holding  for negative time.

In this paper, we extend the above results to the general~$L^{2}$~supercritical and ~$H^{1}$~subcritical case,
and obtain the following conclusion:

\begin{theorem}\label{th1}

Suppose ~$u_{0}\in H^{1},$~~$M(u)^{\frac{1-s_{c}}{s_{c}}}E(u)<M(Q)^{\frac{1-s_{c}}{s_{c}}}E(Q)$~and\\
~$\|\nabla u_{0}\|_{2}\|u_{0}\|_{2}^{\frac{1-s_{c}}{s_{c}}}>\|\nabla Q\|_{2}\|Q\|_{2}^{\frac{1-s_{c}}{s_{c}}}.$~
Then either ~$u(t)$~blows up in finite forward time,  or~$u(t)$~is forward global and there exists a
time sequence~$t_{n}\rightarrow\infty$~such that ~$\|\nabla u(t_{n})\|_{2}\rightarrow\infty.$~
A similar statement holds for negative time.
\end{theorem}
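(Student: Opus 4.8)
The plan is to argue by contradiction, following the Holmer–Roudenko virial/localized-virial strategy but using the correct scaling-invariant quantities for the general $L^2$-supercritical, $H^1$-subcritical range. Assume $u(t)$ is forward global and, contrary to the conclusion, that $\|\nabla u(t)\|_2$ stays bounded on $[0,\infty)$, say $\|\nabla u(t)\|_2 \le C_0$ for all $t\ge 0$. The first step is to record the sharp Gagliardo–Nirenberg inequality with optimal constant attained by $Q$, together with the Pohozaev identities for $Q$, which convert the two hypotheses $M(u)^{\frac{1-s_c}{s_c}}E(u)<M(Q)^{\frac{1-s_c}{s_c}}E(Q)$ and $\|\nabla u_0\|_2\|u_0\|_2^{\frac{1-s_c}{s_c}}>\|\nabla Q\|_2\|Q\|_2^{\frac{1-s_c}{s_c}}$ into a quantitative coercivity statement. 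Specifically, by continuity of $t\mapsto \|\nabla u(t)\|_2$ and the energy trapping argument, the scaling-invariant gradient quantity $\|\nabla u(t)\|_2\|u(t)\|_2^{\frac{1-s_c}{s_c}}$ remains strictly above $\|\nabla Q\|_2\|Q\|_2^{\frac{1-s_c}{s_c}}$ for all $t$, and in fact there is $\delta>0$ (depending only on how far below the threshold $M^{\frac{1-s_c}{s_c}}E$ sits) such that the localized virial quantity stays negative and bounded away from zero: one obtains
\begin{equation}\label{coerc}
2s_c\left(\int |\nabla u(t)|^2 - \frac{N(p-1)}{2(p+1)}\int|u(t)|^{p+1}\right) \le -\delta < 0
\end{equation}
after normalizing via the conservation laws, where the left side is (a multiple of) the Virial bracket $\frac{d^2}{dt^2}\int |x|^2|u|^2$ in the finite-variance case.

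The second and central step is to run the localized virial identity. Since $u$ need not have finite variance and need not be radial, introduce a spatial cutoff $\varphi_R(x)=R^2\varphi(x/R)$ with $\varphi$ smooth, $\varphi(x)=|x|^2$ near the origin, $\varphi$ bounded with bounded derivatives, and set $z_R(t)=\int \varphi_R(x)|u(x,t)|^2\,dx$. A direct computation gives $z_R'(t)=2\,\mathrm{Im}\int \nabla\varphi_R\cdot \nabla u\,\bar u$ and then $z_R''(t)$ equals the unlocalized virial expression plus error terms supported on $|x|\gtrsim R$ and controlled by $R^{-2}$, $\int_{|x|\ge R}|\nabla u|^2$, and $\int_{|x|\ge R}|u|^{p+1}$. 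The non-radial issue is handled exactly as in Holmer–Roudenko: use the momentum $P(u_0)$ via a Galilean shift to reduce to the case $P(u_0)=0$, and control the tail of $|u|^{p+1}$ by interpolating between the bounded quantities $\|u\|_2$, $\|\nabla u\|_2$ (here I use the assumed boundedness of the gradient) — no radial Strauss estimate is needed precisely because $\|\nabla u(t)\|_2\le C_0$ is being assumed for contradiction; this is what replaces radiality. Combining \eqref{coerc} with the error bounds, for $R$ large enough and all $t\ge 0$,
\begin{equation}\label{virialbd}
z_R''(t) \le 8s_c\left(\int|\nabla u|^2 - \frac{N(p-1)}{2(p+1)}\int|u|^{p+1}\right) + o_R(1) \le -\delta < 0.
\end{equation}

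The third step closes the contradiction: \eqref{virialbd} forces $z_R(t)\to -\infty$ as $t\to+\infty$, but $z_R(t)\ge 0$ always (since $\varphi_R\ge 0$ for the standard choice), which is absurd. Hence the assumption that $\|\nabla u(t)\|_2$ is bounded must fail, so along some sequence $t_n\to\infty$ we have $\|\nabla u(t_n)\|_2\to\infty$; together with the dichotomy (global vs.\ finite-time blow-up) coming from local well-posedness, this is exactly the claimed statement. The negative-time case is identical after $t\mapsto -t$, $u\mapsto \bar u$. I expect the main obstacle to be making the error terms in \eqref{virialbd} genuinely uniform in $t$: one must check that the tail contributions $\int_{|x|\ge R}|u(t)|^{p+1}$ can be absorbed using \emph{only} the conserved mass and the (contradiction-)bounded gradient, via Gagliardo–Nirenberg on the exterior region, and that the constant $\delta$ from the energy-trapping/coercivity step does not degenerate — this is where the precise form of the scaling-invariant hypotheses $M^{\frac{1-s_c}{s_c}}E$ and $\|\nabla u\|_2\|u\|_2^{\frac{1-s_c}{s_c}}$, and the sharp Gagliardo–Nirenberg constant expressed through $Q$, must be used carefully to mirror the $s_c=\tfrac12$, $p=3$ computation in \cite{holmer10}.
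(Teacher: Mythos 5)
There is a genuine gap at the heart of your second step, namely the claim that the error terms in the localized virial identity are $o_R(1)$ uniformly in $t$ ``using only the conserved mass and the (contradiction-)bounded gradient, via Gagliardo--Nirenberg on the exterior region.'' The exterior Gagliardo--Nirenberg inequality gives
$\|u\|_{L^{p+1}(|x|\ge R)}^{p+1}\le C\,\|\nabla u\|_{L^{2}(|x|\ge R)}^{\frac{N(p-1)}{2}}\|u\|_{L^{2}(|x|\ge R)}^{2-\frac{(N-2)(p-1)}{2}}$,
and with only $\|u\|_2$ conserved and $\|\nabla u(t)\|_2\le C_0$ this bound does \emph{not} decay as $R\to\infty$ uniformly in $t$: the mass and gradient may migrate entirely into the region $|x|\ge R$ at late times (think of a traveling bump; a Galilean normalization $P(u)=0$ does not prevent spatial escape). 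Boundedness of the gradient is not a substitute for radiality or for spatial localization; this is precisely why the paper's Proposition 3.2 must \emph{assume} a uniform-in-time localization hypothesis $\eta_{\ge R}\le\gamma$, and why Proposition 3.3 instead uses the radial Gagliardo--Nirenberg (Strauss-type) estimate. Without one of these inputs your inequality $z_R''(t)\le-\delta$ cannot be closed, and the contradiction in your third step never materializes.

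The paper fills this gap with the entire concentration-compactness apparatus that your proposal omits: under the contradiction hypothesis it runs a profile decomposition along the NLS flow (Section 6), an inductive threshold argument producing a \emph{critical} solution $u_c$ whose orbit is precompact in $H^1$ modulo a translation path $x(t)$ (Sections 7--8), a near-boundary rigidity statement (Proposition 5.1) together with Lemma 5.2 controlling the speed $|x(t)|/t\lesssim\epsilon^2$ of that path, and only \emph{then} applies the localized virial identity to $u_c$, for which the uniform localization needed in Proposition 3.2 is now available. If you want to retain your short argument, you must either add a hypothesis (finite variance, radiality, or uniform localization, which recovers Propositions 3.1--3.3 of the paper, not the theorem) or reconstruct the compactness/rigidity machinery that converts the contradiction hypothesis into the localization your virial estimate requires.
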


Different from a similar result obtained by Glangeta and Merle~\cite{gm}~for the case~$E(u)<0,$~
our proof is by means of the profile decomposition introduced by Keraani~\cite{keraani},~nonlinear pertubation
theory based on the Strichartz estimate~\cite{TC}~~\cite{KT},~and the rigidity theorems based upon the localized virial identity.
Though with the same idea as in \cite{holmer10},  we still have to
reestablish  the tools mentioned  above, such as the profile decomposition,  to conquer the difficulties  our general case should bring .

\begin{remark}\label{p}
Via the Galilean transform and momentum conservation, in this paper, we will always assume
that~$P(u)=0,$~and  put further standard details in the Appendix. That is to say we need only show Theorem~\ref{th1}~under
the condition ~$P(u)=0.$~
\end{remark}
In this paper,
we denote the
Sobolev space $ H^{1}(\mathbb{R}^{N})$ as $H^{1}$ for short, and the ~$L^{p}$~norm~$\|\cdot\|_{p}.$~
Also for convenience, we will use the notation $C,$ except for some specifications,  standing for the variant absolute constants.

After this paper was finished, we learnt that the authors in
\cite{version} has obtained the same result presented in this paper.
However, the proof here is different from that in \cite{version}. We
hope our method can be helpful for other related problem.

\section{Preliminaries }

In this section,  we will review some basic facts about the ground state and give a dichotomy result.

Weinstein in \cite{W1} proved that the sharp constant ~$C_{GN}$~ of Gagliardo-Nirenberg inequality for $0<s_{c}<1$

\begin{equation}\label{2.1}
\|u\|^{p+1}_{L^{p+1}(R^{N})}\leq C_{GN}\|\nabla
u\|_{L^{2}(R^{N})}^{\frac{N(p-1)}{2}}\|u\|_{L^{2}(R^{N})}^{2-\frac{(N-2)(p-1)}{2}}
\end{equation}
is achieved by ~$u=Q,$~ where ~$Q$~ is the ground state of
$$-(1-s_{c})Q+ \Delta Q+|Q|^{p-1}Q=0.$$
Using Pohozhaev identities we can get the following identities without difficulty:
$$\|Q\|_{2}^{2}=\frac{2}{N}\|\nabla Q\|_{2}^{2},$$
$$\|Q\|^{p+1}_{p+1}=\frac{2(p+1)}{N(p-1)}\|\nabla Q\|_{2}^{2}=\frac{(p+1)}{(p-1)}\|Q\|_{2}^{2},$$
\begin{equation}\label{2.2}
E(Q)=\frac{N(p-1)-4}{2N(p-1)}\|\nabla Q\|_{2}^{2}=\frac{N(p-1)-4}{4(p-1)}\|Q\|_{2}^{2}=\frac{N(p-1)-4}{4(p+1)}\|Q\|^{p+1}_{p+1},
\end{equation}
and $C_{GN}$  can be  expressed by
\begin{equation}\label{2.3}
 C_{GN}=\frac{\|Q\|^{p+1}_{p+1}}{\|\nabla Q\|_{2}^{\frac{N(p-1)}{2}}\|Q\|_{2}^{2-\frac{(N-2)(p-1)}{2}}}.
 \end{equation}

Note that the Sobolev ~$\dot{H}^{s_{c}}$~norm and the equation~\eqref{1.1} are invariant under the scaling
~$u(x,t)\mapsto u_{\lambda}(x,t)=\lambda^{\frac{2}{p-1}}u(\lambda x,\lambda^{2}t).$
~Other scaling invariant quantities are ~$\|\nabla u\|_{2}\|u\|_{2}^{\frac{1-s_{c}}{s_{c}}}$~\\
and ~$E(u)M(u)^{\frac{1-s_{c}}{s_{c}}}.$~

Let
\begin{equation}\label{eta}
\eta(t)=\frac{\|\nabla u\|_{2}\|u\|_{2}^{\frac{1-s_{c}}{s_{c}}}}{\|\nabla Q\|_{2}\|Q\|_{2}^{\frac{1-s_{c}}{s_{c}}}}.
\end{equation}
In order to study the relationship between~$\eta(t)$~and~$\frac{E(u)M(u)^{\frac{1-s_{c}}{s_{c}}}}{E(Q)M(Q)^{\frac{1-s_{c}}{s_{c}}}}
,$~
we might as well assume ~$\|u\|_{2}=\|Q\|_{2}$~by scaling. Denote~$\omega_{1}=\frac{N(p-1)}{N(p-1)-4}$~and
~$\omega_{2}=\frac{4}{N(p-1)-4}.$~
Then by \eqref{2.1}-\eqref{2.3} we have

\begin{align*}
&2\omega_{1}\frac{\|\nabla u\|_{2}^{2}\|u\|_{2}^{\frac{2-2s_{c}}{s_{c}}}}{\|\nabla Q\|_{2}^{2}\|Q\|_{2}^{\frac{2-2s_{c}}{s_{c}}}}
\geq\frac{E(u)M(u)^{\frac{1-s_{c}}{s_{c}}}}{E(Q)M(Q)^{\frac{1-s_{c}}{s_{c}}}}
=\frac{E(u)}{E(Q)}\\
&=\omega_{1}\frac{\|\nabla u\|_{2}^{2}}{\|\nabla Q\|_{2}^{2}}-\frac{2\omega_{1}}{p+1}\frac{\|u\|^{p+1}_{p+1}}{\|
 Q\|^{p+1}_{p+1}}\\
&\geq \omega_{1}\frac{\|\nabla u\|_{2}^{2}}{\|\nabla Q\|_{2}^{2}}-\frac{2\omega_{1}}{p+1}\frac{C_{GN}\|\nabla u\|_{2}^{\frac{N(p-1)}{2}}\|u\|_{2}^{2-\frac{(N-2)(p-1)}{2}}}{\|Q\|^{2}_{2}}\\
&=\omega_{1}\eta(t)^{2}-\frac{2\omega_{1}}{p+1}\frac{C_{GN}\|Q\|_{2}^{2-\frac{(N-2)(p-1)}{2}}}{\|\nabla Q\|_{2}^{2-\frac{(N-2)(p-1)}{2}}}\frac{\|\nabla u\|_{2}^{\frac{N(p-1)}{2}}}{\|\nabla Q\|_{2}^{\frac{N(p-1)}{2}}}\\
&=\omega_{1}\eta(t)^{2}-\frac{4\omega_{1}}{N(p-1)}\eta(t)^{\frac{N(p-1)}{2}}
=\omega_{1}\eta(t)^{2}-\omega_{2}\eta(t)^{\frac{N(p-1)}{2}}.
\end{align*}
That is
\begin{equation}\label{2.5}
2\omega_{1}\eta(t)^{2}
\geq \frac{E(u)M(u)^{\frac{1-s_{c}}{s_{c}}}}{E(Q)M(Q)^{\frac{1-s_{c}}{s_{c}}}}
\geq \omega_{1}\eta(t)^{2}-\omega_{2}\eta(t)^{\frac{N(p-1)}{2}}.
\end{equation}

Note that ~$\frac{\omega_{1}}{\omega_{2}}>1$~as ~$\frac{4}{N}<p-1<\frac{4}{N-2}.$~
Thus it is not difficult to observe that if ~$0\leq M(u)^{\frac{1-s_{c}}{s_{c}}}E(u)/ M(Q)^{\frac{1-s_{c}}{s_{c}}}E(Q)<1,$~
then there exist two solutions~$0\leq \lambda_{-}<1<\lambda $~to the following equation of ~$\lambda $~
\begin{equation}\label{2.6}
 \frac{E(u)M(u)^{\frac{1-s_{c}}{s_{c}}}}{E(Q)M(Q)^{\frac{1-s_{c}}{s_{c}}}}
= \omega_{1}\lambda^{2}-\omega_{2}\lambda^{\frac{N(p-1)}{2}}.
\end{equation}

By the ~$H^{1}$~local theory \cite{TC}~, there exist~$-\infty\leq T_{-}<0<T_{+}\leq+\infty$~such that~$(T_{-},T_{+})$~is the maximal
time interval of existence for ~$u(t)$~solving~\eqref{1.1}~, and if~$T_{+}<+\infty$~then
$$\|\nabla u(t)\|_{2}\geq\frac{C}{(T_{+}-t)^{\frac{1}{p-1}-\frac{N-2}{4}}}\ \ \ as~t\uparrow T_{+},$$
and a similar argument holds if ~$-\infty< T_{-}.$~ Moreover,
as a consequence of the continuity of the flow~$u(t),$~we have the following dichotomy proposition :
\begin{proposition}\label{p21'}
(Global versus blow-up dichotomy)~Let~ $u_{0}\in H^{1}(R^{N})$,~ and let ~$I=(T_{-},T_{+})$ ~be the
maximal time interval of existence of ~$u(t)$~ solving ~\eqref{1.1}.~
Suppose that
\begin{equation}\label{2.1'}
M(u)^{\frac{1-s_{c}}{s_{c}}}E(u)<M(Q)^{\frac{1-s_{c}}{s_{c}}}E(Q).
\end{equation}
If ~\eqref{2.1'}~holds and
\begin{equation}\label{2.2'}
\|u_{0}\|_{2}^{\frac{1-s_{c}}{s_{c}}}\|\nabla u_{0}\|_{2}<\|Q\|_{2}^{\frac{1-s_{c}}{s_{c}}}\|\nabla Q\|_{2},
\end{equation}
then ~$I=(-\infty,+\infty)$,~ i.e., the solution exists globally in
time, and for all time ~$t\in \mathbb{R},$~
\begin{equation}\label{2.3'}
\|u(t)\|_{2}^{\frac{1-s_{c}}{s_{c}}}\|\nabla u(t)\|_{2}<\|Q\|_{2}^{\frac{1-s_{c}}{s_{c}}}\|\nabla Q\|_{2}.
\end{equation}
If ~\eqref{2.1'}~holds and
\begin{equation}\label{2.4'}
\|u_{0}\|_{2}^{\frac{1-s_{c}}{s_{c}}}\|\nabla u_{0}\|_{2}>\|Q\|_{2}^{\frac{1-s_{c}}{s_{c}}}\|\nabla Q\|_{2},
\end{equation}
then for~$t\in I,$~
\begin{equation}\label{2.5'}
\|u(t)\|_{2}^{\frac{1-s_{c}}{s_{c}}}\|\nabla u(t)\|_{2}>\|Q\|_{2}^{\frac{1-s_{c}}{s_{c}}}\|\nabla Q\|_{2}.
\end{equation}

\end{proposition}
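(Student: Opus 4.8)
The plan is to use the conservation laws together with the scaling-normalized Gagliardo--Nirenberg estimate encoded in \eqref{2.5}, and then close the argument with a continuity/connectedness argument on the interval $I$. First I would reduce to the normalized quantity $\eta(t)$ from \eqref{eta}. Multiplying the energy identity by $M(u)^{\frac{1-s_c}{s_c}}$ and using the scaling-invariance of both $E(u)M(u)^{\frac{1-s_c}{s_c}}$ and $\|\nabla u\|_2\|u\|_2^{\frac{1-s_c}{s_c}}$, the inequality \eqref{2.5} gives, for every $t\in I$,
\begin{equation*}
\frac{E(u)M(u)^{\frac{1-s_c}{s_c}}}{E(Q)M(Q)^{\frac{1-s_c}{s_c}}}\ \geq\ \omega_1\,\eta(t)^2-\omega_2\,\eta(t)^{\frac{N(p-1)}{2}}\ =:\ f(\eta(t)),
\end{equation*}
while the left-hand side is a fixed constant $\theta<1$ by hypothesis \eqref{2.1'} (note $E(Q)>0$ since $N(p-1)>4$, so dividing by $E(Q)M(Q)^{\frac{1-s_c}{s_c}}$ preserves the inequality). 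The function $f$ on $[0,\infty)$ satisfies $f(0)=0$, $f(1)=\omega_1-\omega_2=1$ (from $\frac{\omega_1}{\omega_2}>1$ and the explicit values of $\omega_1,\omega_2$, one checks $\omega_1-\omega_2=1$), is strictly increasing on $[0,1]$ and strictly decreasing on $[1,\infty)$ with $f\to-\infty$; hence $\{f\geq\theta\}$ is exactly the closed interval $[\lambda_-,\lambda_+]$ from \eqref{2.6}, with $0\le\lambda_-<1<\lambda_+$.

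Next I would invoke continuity. The map $t\mapsto\eta(t)$ is continuous on $I$ because $u\in C(I;H^1)$, and by the constraint above $\eta(t)$ can never take a value in the open gap $(\lambda_-,\lambda_+)$. Therefore $\{t\in I:\eta(t)<\lambda_-\}$ and $\{t\in I:\eta(t)>\lambda_+\}$ are disjoint relatively open sets whose union is the connected set $I$; so exactly one of them is all of $I$, determined by the initial datum. If \eqref{2.2'} holds then $\eta(0)<1$, and since $1\in(\lambda_-,\lambda_+)$ we must have $\eta(0)\le\lambda_-$, hence $\eta(t)\le\lambda_-<1$ for all $t\in I$, which is \eqref{2.3'} (after multiplying back by $\|\nabla Q\|_2\|Q\|_2^{\frac{1-s_c}{s_c}}$). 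This bound on $\|\nabla u(t)\|_2$ together with the blow-up alternative $\|\nabla u(t)\|_2\to\infty$ as $t\uparrow T_+$ (and similarly at $T_-$) forces $T_\pm=\pm\infty$, i.e.\ $I=\mathbb{R}$. If instead \eqref{2.4'} holds then $\eta(0)>1$, forcing $\eta(t)\ge\lambda_+>1$ for all $t\in I$, which is \eqref{2.5'}.

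The only genuinely delicate point is the trapping mechanism itself, namely that the a priori bound \eqref{2.5} \emph{excludes the intermediate interval} rather than merely giving an upper bound; this is exactly where the strict sign of the energy inequality \eqref{2.1'} and the fact that $f$ attains its maximum value $1$ precisely at $\eta=1$ are used, and where one must be careful that $\lambda_-$ may be $0$ (in which case the lower branch is degenerate but the argument is unaffected). Everything else—identifying $\omega_1-\omega_2=1$, the monotonicity of $f$, and the connectedness argument—is routine. I would also remark that by Remark~\ref{p} we may take $P(u)=0$ throughout, though the statement of this proposition does not actually require it.
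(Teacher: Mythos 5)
Your argument is correct and is essentially the paper's own proof: multiply the energy by $M(u)^{\frac{1-s_c}{s_c}}$, apply the Gagliardo--Nirenberg inequality to trap the scale-invariant quantity $\|\nabla u\|_2\|u\|_2^{\frac{1-s_c}{s_c}}$ outside a forbidden region around the ground-state value where the resulting one-variable function exceeds its constrained level, and conclude by continuity in $t$ plus the blow-up alternative; the paper works with $f(x)=\tfrac12x^2-\tfrac{C_{GN}}{p+1}x^{\frac{N(p-1)}{2}}$ in the unnormalized variable rather than with $\eta$ and $\omega_1,\omega_2$, which is the same computation up to rescaling. The only cosmetic repair needed is in the connectedness step: the two strict-inequality sets need not cover $I$ if $\eta(t)$ ever equals $\lambda_{\pm}$, so use $\{\eta\le\lambda_-\}$ and $\{\eta\ge\lambda_+\}$ (each clopen in $I$ because the open gap is excluded), or more simply observe that $f(\eta(t))\le\theta<1=f(1)$ forces $\eta(t)\ne 1$ for all $t$ and track which side of $1$ the initial datum lies on, which is exactly what the paper does with the single excluded point $x_1$.
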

\begin{proof} Multiplying the formula of energy by
$M(u)^{\frac{1}{s_{c}}-1}$ and using the Gagliardo-Nirenberg
inequality we have
\begin{eqnarray}
E(u)M(u)^{\frac{1}{s_{c}}-1}&=&\frac{1}{2}\|\nabla
u\|_{L^2}^2\|u\|_{L^2}^{\frac{2}{s_{c}}-2}-\frac{1}{p+1}\|u\|_{L^{p+1}}^{p+1}\|u\|_{L^2}^{\frac{2}{s_{c}}-2}\nonumber\\
&\geq&\frac{1}{2}(\|\nabla
u\|_{2}\|u\|_{2}^{\frac{1-s_{c}}{s_{c}}})^{2}-\frac{1}{p+1}C_{GN}(\|\nabla
u\|_{2}\|u\|_{2}^{\frac{1-s_{c}}{s_{c}}})^{\frac{N(p-1)}{2}}.\nonumber
\end{eqnarray}
Define ~$f(x)=\frac{1}{2}x^{2}-\frac{1}{p+1}
C_{GN}x^{\frac{N(p-1)}{2}}.$~ Since ~$N(p-1)\geq4$,~ then~
$f'(x)=x(1-C_{GN}\frac{N(p-1)}{2(p+1)}x^{\frac{N(p-1)-4}{2}}),$~\\
and~ $f'(x)=0$~ when~ $x_{0}=0$~ and ~$x_{1}=\|\nabla
Q\|_{2}\|Q\|_{2}^{\frac{1-s_{c}}{s_{c}}}$.~ Note that~ $f(0)=0$ ~and
~$f(x_{1})=E(u)M(u)^{\frac{1}{s_{c}}-1},$~ thus the graph of~ $f$ ~has
two extrema: a local minimum at~ $x_{0}$ ~and a local maximum at
~$x_{1}$. ~The condition ~\eqref{2.1'}~implies that~
$E(u_{0})M(u_{0})^{\frac{1}{s_{c}}-1}<f(x_{1})$.~ Combining with
energy conservation, we have
\begin{equation}\label{2.6'}
f(\|\nabla
u\|_{2}\|u\|_{2}^{\frac{1-s_{c}}{s_{c}}})\leq E(u)M(u_{0})^{\frac{1}{s_{c}}-1}
=E(u)M(u)^{\frac{1}{s_{c}}-1}<f(x_{1}).
\end{equation}

 If initially~
$\|\nabla u_{0}\|_{2}\|u_{0}\|_{2}^{\frac{1-s_{c}}{s_{c}}}<x_{1}$,~i.e., the condition~\eqref{2.2'}~holds, then by~\eqref{2.6'}~
and the continuity of ~$\|\nabla u(t)\|_{2}$~in~$t,$~
we have~ $\|\nabla
u(t)\|_{2}\|u(t)\|_{2}^{\frac{1-s_{c}}{s_{c}}}<x_{1}$~for all time~$t\in I.$~In particular, the~$H^{1}$~norm of the solution is bounded,
which implies the global existence and~\eqref{2.3'}~ in this case.

 If initially~
$\|\nabla u_{0}\|_{2}\|u_{0}\|_{2}^{\frac{1-s_{c}}{s_{c}}}>x_{1}$,~i.e., the condition~\eqref{2.4'}~holds, then by~\eqref{2.6'}~
and the continuity of ~$\|\nabla u(t)\|_{2}$~in~$t,$~
we have~ $\|\nabla
u(t)\|_{2}\|u(t)\|_{2}^{\frac{1-s_{c}}{s_{c}}}>x_{1}$~for all time~$t\in I,$~which proves~\eqref{2.5'}.

\end{proof}

The following is another statement of the dichotomy proposition in terms of
 $\lambda$ and $\eta(t)$ defined by \eqref{2.6} and \eqref{eta} respectively, which will be useful in the sequel.

\begin{proposition}\label{p21}
Let~$M(u)^{\frac{1-s_{c}}{s_{c}}}E(u)<M(Q)^{\frac{1-s_{c}}{s_{c}}}E(Q)$~and ~$0\leq \lambda_{-}<1<\lambda $~be defined by \eqref{2.6}. Then exactly one of the following holds:\\
(1)\ \ The solution~$u(t)$~to\eqref{1.1} is global and
$$\frac{1}{2\omega_{1}} \frac{E(u)M(u)^{\frac{1-s_{c}}{s_{c}}}}{E(Q)M(Q)^{\frac{1-s_{c}}{s_{c}}}}
\leq \eta(t)^{2}\leq\lambda_{-}^{2},\ \ \ \forall~ t \in (-\infty,+\infty)$$
(2)\ \ ~$1<\lambda^{2}\leq \eta(t)^{2},$~~$\forall ~t \in (T_{-},T_{+}).$~

\end{proposition}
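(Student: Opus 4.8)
The plan is to deduce Proposition~\ref{p21} directly from Proposition~\ref{p21'} together with the two--sided bound \eqref{2.5}, after a short analysis of the function appearing in \eqref{2.6}. Write $\beta := E(u)M(u)^{\frac{1-s_{c}}{s_{c}}}/\bigl(E(Q)M(Q)^{\frac{1-s_{c}}{s_{c}}}\bigr)$, so that $\beta<1$ by hypothesis, and set $g(\lambda):=\omega_{1}\lambda^{2}-\omega_{2}\lambda^{\frac{N(p-1)}{2}}$, so that $\lambda_{-}$ and $\lambda$ are exactly the two roots of $g(\lambda)=\beta$ supplied by the discussion preceding the proposition. First I would record the elementary properties of $g$. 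Using the identity $\tfrac{N(p-1)}{2}\,\omega_{2}=2\omega_{1}$ (immediate from $\omega_{1}=\tfrac{N(p-1)}{N(p-1)-4}$, $\omega_{2}=\tfrac{4}{N(p-1)-4}$), one gets $g'(\lambda)=2\omega_{1}\lambda\bigl(1-\lambda^{\frac{N(p-1)}{2}-2}\bigr)$; since $N(p-1)>4$, this exponent is positive, so $g$ is strictly increasing on $[0,1]$ and strictly decreasing on $[1,\infty)$, with $g(0)=0$ and $g(1)=\omega_{1}-\omega_{2}=1$. Hence $g$ maps $[0,1)$ strictly increasingly onto $[0,1)$ and $[1,\infty)$ strictly decreasingly onto $(-\infty,1]$; in particular $\lambda_{-}\in[0,1)$ and $\lambda\in(1,\infty)$ are the unique points with $g(\lambda_{-})=g(\lambda)=\beta$.

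Next I would split according to the two alternatives of Proposition~\ref{p21'}. Evaluating the right--hand inequality of \eqref{2.5} at $t=0$ gives $\beta\geq g(\eta(0))$, so $\eta(0)=1$ is impossible (it would force $\beta\geq g(1)=1$); thus either \eqref{2.2'} or \eqref{2.4'} holds at $t=0$, and these are mutually exclusive. Suppose \eqref{2.2'} holds. Then Proposition~\ref{p21'} yields global existence and $\eta(t)<1$ for all $t\in\mathbb{R}$, which places us in alternative~(1). The lower bound $\eta(t)^{2}\geq \beta/(2\omega_{1})$ is exactly the left inequality of \eqref{2.5}. For the upper bound, the right inequality of \eqref{2.5} gives $g(\eta(t))\leq\beta=g(\lambda_{-})$, and since $\eta(t),\lambda_{-}\in[0,1)$ where $g$ is strictly increasing, this forces $\eta(t)\leq\lambda_{-}$, i.e. $\eta(t)^{2}\leq\lambda_{-}^{2}$.

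Now suppose instead \eqref{2.4'} holds. Then Proposition~\ref{p21'} gives $\eta(t)>1$ for all $t\in(T_{-},T_{+})$, so we are in alternative~(2), and $\lambda^{2}>1$ is part of the definition of $\lambda$. The right inequality of \eqref{2.5} again gives $g(\eta(t))\leq\beta=g(\lambda)$, and since $\eta(t),\lambda\in(1,\infty)$ where $g$ is strictly decreasing, we conclude $\eta(t)\geq\lambda$, i.e. $\lambda^{2}\leq\eta(t)^{2}$. Finally, the two alternatives are mutually exclusive because (1) forces $\eta(t)^{2}\leq\lambda_{-}^{2}<1$ while (2) forces $\eta(t)^{2}\geq\lambda^{2}>1$, so exactly one holds. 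The proof is almost entirely bookkeeping; the only step warranting care is the monotonicity/root analysis of $g$ (in particular the identity $\tfrac{N(p-1)}{2}\omega_{2}=2\omega_{1}$) and the observation, which is precisely the content of Proposition~\ref{p21'}, that $\eta(t)$ cannot cross the threshold $1$ along the flow.
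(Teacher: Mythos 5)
Your proof is correct and follows exactly the route the paper intends: the paper states Proposition~\ref{p21} as an immediate reformulation of Proposition~\ref{p21'} combined with \eqref{2.5}, and offers no written proof, so your monotonicity analysis of $g(\lambda)=\omega_{1}\lambda^{2}-\omega_{2}\lambda^{\frac{N(p-1)}{2}}$ (via $\tfrac{N(p-1)}{2}\omega_{2}=2\omega_{1}$ and $g(1)=1$) simply supplies the bookkeeping the paper leaves implicit. No gaps; the only tacit point, which you handle consistently with the paper, is that the existence of $\lambda_{-}$ presupposes $E(u)\geq 0$, as in the discussion surrounding \eqref{2.6}.
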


Naturally, whether the solution is of the first or second type in Proposition~\ref{p21}~is determined by checking the initial data.
Note that the second case does not assert finite-time blow-up. In the first case, we have further results as follows , the proof of which
is almost the same as~\cite{yuan}.

\begin{lemma}\label{sd}
(Small initial data). Let $\|u_{0}\|_{\dot{H}^{s_{c}}}\leq A$, then
there exists  $\delta_{sd}=\delta_{sd}(A)>0$ such that if
~$\|e^{it\Delta}u_{0}\|_{S(\dot{H}^{s_{c}})}\leq \delta_{sd}, $~then~$ u $~solving \eqref{1.1} is global and
\begin{eqnarray}
&&\|u\|_{S(\dot{H}^{s_{c}})}\leq
2\|e^{it\Delta}u_{0}\|_{S(\dot{H}^{s_{c}})},\\
&&\|D^{s_{c}}u\|_{S(L^{2})}\leq
2c\|u_{0}\|_{\dot{H}^{s_{c}}}.
\end{eqnarray}
(one will find~$\|\cdot\|_{S(\dot{H}^{s_{c}})}$~in Section 6, and note that by
 Strichartz estimates, the hypotheses are satisfied if
~$\|u_{0}\|_{\dot{H}^{s_{c}}}\leq C\delta_{sd}. $)
\end{lemma}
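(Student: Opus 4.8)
The statement is a standard small-data global existence result at the scaling-critical level $\dot{H}^{s_c}$, so the plan is a contraction-mapping argument in Strichartz spaces. First I would recast \eqref{1.1} as the Duhamel equation
\[
\Phi(u)(t) = e^{it\Delta}u_0 + i\int_0^t e^{i(t-s)\Delta}\bigl(|u|^{p-1}u\bigr)(s)\,ds,
\]
and seek a fixed point of $\Phi$ on all of $\mathbb{R}$ in the set
\[
B = \Bigl\{\, u : \|u\|_{S(\dot{H}^{s_c})} \le 2\|e^{it\Delta}u_0\|_{S(\dot{H}^{s_c})},\ \ \|D^{s_c}u\|_{S(L^2)} \le 2c\|u_0\|_{\dot{H}^{s_c}} \,\Bigr\},
\]
which one checks is complete for the metric $d(u,v) = \|u-v\|_{S(\dot{H}^{s_c})}$. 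The admissible exponents defining the $S(\dot{H}^{s_c})$ and $S(L^2)$ norms are those recalled in Section~6; the only thing that matters here is that for $0<s_c<1$ and $1+\frac{4}{N}<p<1+\frac{4}{N-2}$ one can choose them so that the H\"older bookkeeping below closes.

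Next I would apply the homogeneous and inhomogeneous Strichartz estimates at both the $L^2$ and the $\dot{H}^{s_c}$ levels to obtain, schematically,
\[
\|D^{s_c}\Phi(u)\|_{S(L^2)} \le c\|u_0\|_{\dot{H}^{s_c}} + c\bigl\|D^{s_c}(|u|^{p-1}u)\bigr\|_{S'(L^2)}, \qquad \|\Phi(u)\|_{S(\dot{H}^{s_c})} \le \|e^{it\Delta}u_0\|_{S(\dot{H}^{s_c})} + c\bigl\|D^{s_c}(|u|^{p-1}u)\bigr\|_{S'(L^2)},
\]
where $S'(L^2)$ denotes the dual Strichartz norm. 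The decisive nonlinear estimate, obtained from the fractional Leibniz rule together with H\"older in the Strichartz exponents, has the form
\[
\bigl\|D^{s_c}(|u|^{p-1}u)\bigr\|_{S'(L^2)} \le c\,\|u\|_{S(\dot{H}^{s_c})}^{\,p-1}\,\|D^{s_c}u\|_{S(L^2)} .
\]
Inserting the two bounds that define $B$, and using $\|u_0\|_{\dot{H}^{s_c}}\le A$ together with $\|e^{it\Delta}u_0\|_{S(\dot{H}^{s_c})}\le\delta_{sd}$, one chooses $\delta_{sd}=\delta_{sd}(A)>0$ small enough that the two right-hand sides are bounded by $2c\|u_0\|_{\dot{H}^{s_c}}$ and by $2\|e^{it\Delta}u_0\|_{S(\dot{H}^{s_c})}$ respectively, so that $\Phi:B\to B$; an analogous difference estimate for $|u|^{p-1}u-|v|^{p-1}v$ makes $\Phi$ a contraction on $(B,d)$ after possibly shrinking $\delta_{sd}$. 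The resulting fixed point is the solution of \eqref{1.1}, it obeys the two claimed inequalities by construction, and since $\|u\|_{S(\dot{H}^{s_c})}<\infty$ on the whole line the standard continuation criterion forces $T_\pm=\pm\infty$, so $u$ is global.

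The hard part will be the nonlinear estimate above: since $p$ need not be an odd integer, $|u|^{p-1}u$ is only H\"older continuous of order $p-1$, so one must invoke the fractional chain rule and check that the differentiation order $s_c$ and the Strichartz exponents remain in the admissible range throughout $0<s_c<1$, $1+\frac{4}{N}<p<1+\frac{4}{N-2}$ --- exactly the bookkeeping that is invisible in the 3D cubic case of \cite{holmer10} and that is what forces $\delta_{sd}$ to depend on the a priori bound $A$. Once that estimate and its difference version are established, closing the self-map and contraction and upgrading to global existence are routine, which is why the author can simply refer to \cite{yuan}.
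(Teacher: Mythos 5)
Your proposal is correct and is essentially the argument the paper relies on: the paper gives no proof of Lemma~\ref{sd}, deferring to \cite{yuan}, where the result is established by exactly this Duhamel/contraction-mapping scheme in the pair of norms $S(\dot H^{s_c})$ and $D^{s_c}(\cdot)\in S(L^2)$, with the fractional chain rule supplying the nonlinear estimate and the finiteness of $\|u\|_{S(\dot H^{s_c})}$ on $\mathbb{R}$ giving globality. The only cosmetic difference is that the paper's framework (see Remark~\ref{rpd}) often closes the $S(\dot H^{s_c})$ bound via $\||u|^{p-1}u\|_{S'(\dot H^{-s_c})}\le c\|u\|_{S(\dot H^{s_c})}^{p}$ using $(q/p,r/p)\in\Lambda'_{s_c}$ rather than through $D^{s_c}$, but both routes are standard and equivalent here.
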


\begin{lemma}\label{wave}
(Existence of wave operators). Suppose that $\psi^{+}\in H^1$ and
\begin{equation}\label{3.6}
\frac{1}{2^{}}||\nabla\psi^{+}||_{2}^{2}M(\psi^{+})^{\frac{1-s_{c}}{s_{c}}}<
E(Q)^{}M(Q)^{\frac{1-s_{c}}{s_{c}}}.
\end{equation}
Then there exists $v_{0}\in H^1$ such that v solves \eqref{1.1} with
initial data $v_0$ globally in $H^{1}$ with
$$\|\nabla
v(t)\|_{2}^{}\|v_{0}\|_{2}^{\frac{1-s_{c}}{s_{c}}}<\|\nabla
Q\|_{2}^{}\|Q\|_{2}^{\frac{1-s_{c}}{s_{c}}},M(v)=\|\psi^{+}\|_{2}^{2},E[v]=\frac{1}{2}\|\nabla\psi^{+}\|_{2}^{2},$$
and
$$\lim_{t\rightarrow+\infty}\|v(t)-e^{it\Delta}\psi^{+}\|_{H^1}=0.$$
Moreover, if
$\|e^{it\Delta}\psi^{+}\|_{S(\dot{H}^{s_{c}})}\leq \delta_{sd}$, then
$$\|v_{0}\|_{\dot{H}^{s_{c}}}\leq2\|\psi^{+}\|_{\dot{H}^{s_{c}}}\
\mathrm{and} \
 \|v\|_{S(\dot{H}^{s_{c}})}\leq2\|e^{it\Delta}\psi^{+}\|_{S(\dot{H}^{s_{c}})}.$$
$$\|D^{s}v\|_{S(L^{2})}\leq c\|\psi^{+}\|_{\dot{H}^s}, 0\leq s\leq1.$$
\end{lemma}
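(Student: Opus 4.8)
The statement to prove is Lemma~\ref{wave} on the existence of wave operators. This is a standard construction in the concentration-compactness approach to NLS, adapted here from the 3D cubic case to the general mass-supercritical, energy-subcritical range $0<s_c<1$.

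\medskip

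The plan is to construct the solution $v$ by solving \eqref{1.1} backwards from $t=+\infty$, i.e. we want $v$ to behave like the free evolution $e^{it\Delta}\psi^+$ as $t\to+\infty$. First I would fix a large time $T$ and solve the integral equation
\[
v(t)=e^{it\Delta}\psi^+ + i\int_t^\infty e^{i(t-s)\Delta}\bigl(|v|^{p-1}v\bigr)(s)\,ds
\]
on $[T,\infty)$ by a fixed-point argument in the Strichartz space $S(\dot H^{s_c})$ together with the auxiliary space controlling $\|D^{s_c}v\|_{S(L^2)}$. Since $\|e^{it\Delta}\psi^+\|_{S(\dot H^{s_c})}\to 0$ as the time interval is pushed to $[T,\infty)$ with $T$ large (by the standard fact that the free Strichartz norm of $H^1$ data over $[T,\infty)$ tends to $0$), for $T$ sufficiently large the smallness hypothesis of Lemma~\ref{sd} is met and the contraction closes. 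This gives $v$ on $[T,\infty)$ with $\lim_{t\to+\infty}\|v(t)-e^{it\Delta}\psi^+\|_{H^1}=0$; in particular $v(t)-e^{it\Delta}\psi^+\to 0$ in $L^2$, in $\dot H^1$, and in $L^{p+1}$ (the last by Gagliardo--Nirenberg/Sobolev embedding since $p+1<\frac{2N}{N-2}$). Then I set $v_0=v(T)$ (more precisely, I extend $v$ backward by the local theory; the point is that $v$ is a genuine $H^1$ solution of \eqref{1.1}) and read off the conserved quantities by passing to the limit $t\to+\infty$: since $\|e^{it\Delta}\psi^+\|_{p+1}\to 0$ and $\|e^{it\Delta}\psi^+\|_{\dot H^1}=\|\nabla\psi^+\|_2$, $\|e^{it\Delta}\psi^+\|_2=\|\psi^+\|_2$ are constant, conservation of mass and energy force $M(v)=\|\psi^+\|_2^2$ and $E[v]=\tfrac12\|\nabla\psi^+\|_2^2$.

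\medskip

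Next I would check that $v$ falls into the "global, subthreshold" branch. Combining $M(v)^{\frac{1-s_c}{s_c}}E(v)=\tfrac12\|\nabla\psi^+\|_2^2 M(\psi^+)^{\frac{1-s_c}{s_c}}$ with hypothesis \eqref{3.6} gives $M(v)^{\frac{1-s_c}{s_c}}E(v)<M(Q)^{\frac{1-s_c}{s_c}}E(Q)$, so the dichotomy of Proposition~\ref{p21'} applies to $v$. To decide which alternative holds, I evaluate $\|\nabla v(t)\|_2\|v(t)\|_2^{\frac{1-s_c}{s_c}}$ as $t\to+\infty$: it converges to $\|\nabla\psi^+\|_2\|\psi^+\|_2^{\frac{1-s_c}{s_c}}$, and the elementary inequality $f(x)\le E(v)M(v)^{\frac{1}{s_c}-1}=\tfrac12 x_\infty^2$ with $f(x)=\tfrac12 x^2-\tfrac{1}{p+1}C_{GN}x^{\frac{N(p-1)}{2}}$ (evaluated at $x=x_\infty:=\|\nabla\psi^+\|_2\|\psi^+\|_2^{\frac{1-s_c}{s_c}}$) forces $\tfrac{1}{p+1}C_{GN}x_\infty^{\frac{N(p-1)}{2}}\le 0$ unless... actually more cleanly: \eqref{3.6} says $\tfrac12 x_\infty^2<f(x_1)$ where $x_1=\|\nabla Q\|_2\|Q\|_2^{\frac{1-s_c}{s_c}}$ is the maximizer of $f$, and since $f(x_1)=\tfrac{N(p-1)-4}{2N(p-1)}x_1^2<\tfrac12 x_1^2$, we get $x_\infty^2<x_1^2$, i.e. $\|\nabla\psi^+\|_2\|\psi^+\|_2^{\frac{1-s_c}{s_c}}<\|\nabla Q\|_2\|Q\|_2^{\frac{1-s_c}{s_c}}$. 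Hence for $t$ large $\|\nabla v(t)\|_2\|v(t)\|_2^{\frac{1-s_c}{s_c}}<x_1$, we are in case \eqref{2.2'}--\eqref{2.3'} of Proposition~\ref{p21'}, so $v$ is global on all of $\mathbb{R}$ and the strict inequality $\|\nabla v(t)\|_2\|v_0\|_2^{\frac{1-s_c}{s_c}}<\|\nabla Q\|_2\|Q\|_2^{\frac{1-s_c}{s_c}}$ holds for all $t$.

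\medskip

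Finally, for the last two "Moreover" assertions I would keep track of the quantitative Strichartz bounds from the fixed-point argument. If in addition $\|e^{it\Delta}\psi^+\|_{S(\dot H^{s_c})}\le\delta_{sd}$ over all of $\mathbb{R}$, then the contraction can be run on $[T,\infty)$ with constants independent of how large $T$ is, and since $v$ is already known to be global, a bootstrap/continuity argument in $t$ (running the local Strichartz theory over all of $\mathbb{R}$, splitting into finitely many intervals on which the relevant norms are small) upgrades the bounds to $\|v\|_{S(\dot H^{s_c})}\le 2\|e^{it\Delta}\psi^+\|_{S(\dot H^{s_c})}$, $\|v_0\|_{\dot H^{s_c}}\le 2\|\psi^+\|_{\dot H^{s_c}}$, and $\|D^s v\|_{S(L^2)}\le c\|\psi^+\|_{\dot H^s}$ for $0\le s\le 1$ — this last for the full range of $s$ by interpolating the $s=s_c$ bound with the energy bound at $s=1$ (using conservation of $E[v]$ and $M(v)$) and with $s=0$ (mass conservation). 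The main obstacle, and the place where the generalization from $p=3$, $N=3$ actually requires care, is verifying that the nonlinear estimates feeding the fixed-point argument — the mapping properties of $u\mapsto |u|^{p-1}u$ in the $\dot H^{s_c}$-Strichartz spaces and the fractional chain rule for $D^{s_c}(|u|^{p-1}u)$ when $p-1$ is not an even integer, so that $|u|^{p-1}u$ is only $C^{1,p-1}$ or even less regular — hold with the right exponents in the admissible range $0<s_c<1$; these are precisely the estimates that had to be "reestablished" as noted in the introduction, and I would state them as a preliminary nonlinear estimate lemma (with the relevant Strichartz pairs $(q,r)$ chosen so that $\frac{2}{q}+\frac{N}{r}=\frac{N}{2}-s_c$) before running the contraction.
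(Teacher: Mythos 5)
Your construction---solving the integral equation backward from $t=+\infty$ by a fixed point in the $\dot H^{s_c}$-Strichartz spaces on $[T,\infty)$ with $T$ large, passing to the limit to identify $M(v)=\|\psi^+\|_2^2$ and $E[v]=\frac12\|\nabla\psi^+\|_2^2$, and then invoking Proposition~\ref{p21'} via the observation that \eqref{3.6} forces $\|\nabla\psi^+\|_2\|\psi^+\|_2^{(1-s_c)/s_c}<\|\nabla Q\|_2\|Q\|_2^{(1-s_c)/s_c}$---is the standard argument and matches what the paper relies on (the paper gives no proof here, deferring to \cite{yuan}). One small caveat: interpolating the $s=0$ and $s=1$ bounds yields $\|D^{s}v\|_{S(L^2)}\le c\,\|\psi^+\|_2^{1-s}\|\nabla\psi^+\|_2^{s}$ rather than $c\,\|\psi^+\|_{\dot H^{s}}$, so the final estimate is better obtained by running the contraction at derivative level $s$ directly, using the fractional chain rule you already flag.
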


\begin{theorem}\label{t22}
(Scattering). If~$0<M(u)^{\frac{1-s_{c}}{s_{c}}}E(u)/M(Q)^{\frac{1-s_{c}}{s_{c}}}E(Q)<1$~
and the first case of ~Proposition~\ref{p21}~holds, then ~$u(t)$~scatters as ~$t\rightarrow +\infty$~or
~$t\rightarrow -\infty.$~That means there exist~$\phi_{\pm}\in H^{1}$~such that
\begin{equation}\label{2.7}
 \lim_{t\rightarrow\pm\infty}\|u(t)-e^{-it\Delta}\phi_{\pm}\|_{H^{1}}=0.
\end{equation}
Consequently, we have that
\begin{equation}\label{2.8}
 \lim_{t\rightarrow\pm\infty}\|u(t)\|_{L^{p+1}}=0
\end{equation}
and
\begin{equation}\label{2.9}
 \lim_{t\rightarrow\pm\infty} \eta(t)^{2}=\frac{1}{2\omega_{1}} \frac{E(u)M(u)^{\frac{1-s_{c}}{s_{c}}}}{E(Q)M(Q)^{\frac{1-s_{c}}{s_{c}}}}.
\end{equation}
\end{theorem}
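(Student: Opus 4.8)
\emph{Proof strategy.} The plan is to run the Kenig--Merle concentration--compactness and rigidity scheme, in the form used in \cite{nonradial} and \cite{yuan}, now for a general power $1+\frac{4}{N}<p<1+\frac{4}{N-2}$. Define $\mathcal{ME}_c$ to be the supremum of all $\delta>0$ with the property that every $u_0\in H^1$ satisfying both $M(u_0)^{\frac{1-s_c}{s_c}}E(u_0)<\delta$ and the sub-threshold bound \eqref{2.2'} launches a global solution with $\|u\|_{S(\dot H^{s_c})}<\infty$. By Lemma~\ref{sd} together with the Strichartz estimates, $\mathcal{ME}_c>0$, and the goal is to prove $\mathcal{ME}_c\ge M(Q)^{\frac{1-s_c}{s_c}}E(Q)$. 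Granting this, if $u$ is as in the first case of Proposition~\ref{p21} then $\eta(t)\le\lambda_-$ keeps $\|u(t)\|_{H^1}$ bounded and forces $\|u\|_{S(\dot H^{s_c})}<\infty$, whence the usual Duhamel/Strichartz argument upgrades this to convergence of $u(t)$ to a free solution in $H^1$, that is, \eqref{2.7}. So I would argue by contradiction, assuming $\mathcal{ME}_c<M(Q)^{\frac{1-s_c}{s_c}}E(Q)$.

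Then there is a sequence of solutions $u_n$ with $M(u_n)^{\frac{1-s_c}{s_c}}E(u_n)\to\mathcal{ME}_c$, each obeying \eqref{2.2'}, but with $\|u_n\|_{S(\dot H^{s_c})}=\infty$ on, say, $[0,\infty)$. I would apply Keraani's linear profile decomposition \cite{keraani} to the $H^1$-bounded sequence $\{u_n(0)\}$, use the Pythagorean expansions of $M$ and of $E$ along the profiles, and note that by Propositions~\ref{p21'}--\ref{p21} the sub-threshold condition \eqref{2.2'} passes to every profile, so that each profile mass--energy is nonnegative and their sum is $\le\mathcal{ME}_c$. If the decomposition has at least two nontrivial profiles, or one profile together with a non-vanishing remainder, then each profile has mass--energy strictly below $\mathcal{ME}_c$; hence each associated nonlinear profile --- constructed directly when its time parameter $t_n^j$ stays bounded, and via the wave operators of Lemma~\ref{wave} when $t_n^j\to\pm\infty$ --- is global with finite $S(\dot H^{s_c})$ norm, and the nonlinear long-time perturbation lemma then gives $\|u_n\|_{S(\dot H^{s_c})}<\infty$ for $n$ large, a contradiction. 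The only remaining case is a single profile with vanishing remainder and bounded time parameter, which, after a space translation and after checking (in the $H^1$-subcritical regime) that its scaling parameter stays bounded above and below, produces a \emph{critical element}: a global solution $u_c\not\equiv0$ with $M(u_c)^{\frac{1-s_c}{s_c}}E(u_c)=\mathcal{ME}_c$, still in the first case of Proposition~\ref{p21}, that does not scatter and for which $\{u_c(\cdot-x(t),t):t\in\mathbb{R}\}$ is precompact in $H^1$ for some continuous path $x(t)$.

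It remains to rule out the critical element. By Remark~\ref{p} we may take $P(u_c)=0$; combined with the $H^1$-precompactness, the standard truncated center-of-mass estimate gives $x(t)=o(t)$ as $|t|\to\infty$. Let $\varphi$ be a smooth radial cutoff, put $\phi_R(x)=R^2\varphi(x/R)$ and $z_R(t)=\int\phi_R(x)\,|u_c(x,t)|^2\,dx$; the localized virial identity yields $z_R''(t)=8\|\nabla u_c(t)\|_2^2-\frac{4N(p-1)}{p+1}\|u_c(t)\|_{p+1}^{p+1}+\mathrm{Err}_R(t)$, where by precompactness and $x(t)=o(t)$ one can make $\sup_{0\le t\le T}|\mathrm{Err}_R(t)|$ as small as desired by taking $R=R(T)$ large enough. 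Since $\eta(t)\le\lambda_-<1$ uniformly, the Gagliardo--Nirenberg inequality and the identities \eqref{2.2} upgrade \eqref{2.5} to a coercive bound $8\|\nabla u_c(t)\|_2^2-\frac{4N(p-1)}{p+1}\|u_c(t)\|_{p+1}^{p+1}\ge c\,\|\nabla u_c(t)\|_2^2$ with $c=c(\lambda_-)>0$, and moreover $\inf_t\|\nabla u_c(t)\|_2>0$ (otherwise precompactness would force $u_c(t_n)\to0$ in $H^1$ along some sequence, contradicting conservation of mass). Hence $z_R''(t)\ge c'>0$ on $[0,T]$, so $z_R'(T)-z_R'(0)\ge c'T$, while $|z_R'(t)|\le CR\|\nabla u_c(t)\|_2\|u_c(t)\|_2\le CR$; taking $R=R(T)\to\infty$ with $R(T)/T\to0$ (possible because $x(t)=o(t)$) then forces a contradiction as $T\to\infty$. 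Therefore $u_c\equiv0$, contradicting non-scattering, so $\mathcal{ME}_c\ge M(Q)^{\frac{1-s_c}{s_c}}E(Q)$, which proves \eqref{2.7}.

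Finally, \eqref{2.8} follows from \eqref{2.7} via $\|u(t)\|_{L^{p+1}}\le\|u(t)-e^{-it\Delta}\phi_\pm\|_{L^{p+1}}+\|e^{-it\Delta}\phi_\pm\|_{L^{p+1}}$, the first term tending to $0$ by the embedding $H^1\hookrightarrow L^{p+1}$ (which holds throughout the range of $p$ considered) and the second by the $L^{p+1}$-dispersive decay of the free flow; and \eqref{2.9} follows by combining \eqref{2.8} with conservation of mass and energy --- which forces $\|\nabla u(t)\|_2^2\to2E(u)$ --- and the identity relating $\|\nabla Q\|_2^2$ to $E(Q)$ in \eqref{2.2}, inserted into the definition \eqref{eta} of $\eta(t)$. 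I expect the main obstacle to be the concentration--compactness package in this generality: establishing the linear profile decomposition with the Pythagorean energy expansion adapted to the $\dot H^{s_c}$ scaling, and above all the nonlinear perturbation (stability) theory at the level of the $\dot H^{s_c}$-admissible Strichartz norms, where the non-algebraic nonlinearity $|u|^{p-1}u$ for non-integer $p$ forces the use of fractional chain-rule and Leibniz estimates in place of the exact multilinear expansions available in the cubic case; controlling the translation path $x(t)$ in the non-radial rigidity step is the other delicate ingredient.
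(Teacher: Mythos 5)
Your outline is correct and is essentially the argument the paper itself relies on: Theorem~\ref{t22} is not proved in the text but deferred to \cite{yuan} (following the Kenig--Merle scheme of \cite{km} as implemented for NLS in \cite{nonradial}), and that is precisely the concentration--compactness plus rigidity proof you describe, built from the same profile decomposition, wave-operator, and perturbation machinery the paper redevelops in Section~6 for the blow-up side (with the minor remark that in this intercritical $H^1$ setting the profile decomposition carries no scaling parameters, so the "scaling parameter bounded above and below" step you mention is vacuous here). One further observation: your (correct) derivation of the limit of $\eta(t)^2$ from $\|\nabla u(t)\|_2^2\to 2E(u)$ and $\|\nabla Q\|_2^2=2\omega_1E(Q)$ produces the constant $\tfrac{1}{\omega_1}$ rather than the $\tfrac{1}{2\omega_1}$ printed in \eqref{2.9}, which appears to be a factor-of-two slip inherited from the non-sharp left-hand inequality in \eqref{2.5}.
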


\section{Virial Identity and Blow-Up Conditions}

In the sequel we focus on the second case of Proposition \ref{p21}. Using the classical virial identity, we first derive
the upper bound on the finite blow-up time under the finite variance hypothesis.

\begin{proposition}\label{p31}
Let $M(u)=M(Q),$~~$E(u)^{s_{c}}<E(Q)^{s_{c}}$. Suppose $\|xu_0\|_2<+\infty$ and suppose the second case of Proposition \ref{p21} holds
(~$\lambda>1$~ is defined by \eqref{2.6}).Define  $r(t)$ to be the scaled variance:
$$r(t)=\frac{\|xu\|_{2}^{2}}{\left(-16\omega_{1}\lambda^{2}+4N(p-1)\omega_{2}\lambda^{\frac{N(p-1)}{2}}\right)E(Q)}$$
 Then blow-up occurs in forward time before~$t_{b},$ where
 $$t_{b}=r'(0)+\sqrt{r'(0)^{2}+2r(0)}.$$
\end{proposition}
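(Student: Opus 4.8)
The plan is to run the classical finite–variance virial argument, with the algebra arranged so that the second–case bound of Proposition~\ref{p21} produces exactly the normalising constant appearing in the definition of $r(t)$. First I would record that, since $\|xu_{0}\|_{2}<\infty$, the function $J(t):=\|xu(t)\|_{2}^{2}$ is finite and of class $C^{2}$ on the maximal interval $(T_{-},T_{+})$, and that the virial identity
\begin{equation*}
J''(t)=8\|\nabla u(t)\|_{2}^{2}-\frac{4N(p-1)}{p+1}\|u(t)\|_{p+1}^{p+1}
\end{equation*}
holds; this is standard $H^{1}$-theory (a truncation/regularisation argument if one wants full rigour). Inserting $\|u\|_{p+1}^{p+1}=(p+1)\bigl(\tfrac{1}{2}\|\nabla u\|_{2}^{2}-E(u)\bigr)$ from the definition of the energy and using conservation of $E$, this rewrites as
\begin{equation*}
J''(t)=\bigl(8-2N(p-1)\bigr)\|\nabla u(t)\|_{2}^{2}+4N(p-1)E(u).
\end{equation*}

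Next, since $p>1+\tfrac{4}{N}$ the coefficient $8-2N(p-1)=-2\bigl(N(p-1)-4\bigr)$ is strictly negative, so the second–case estimate $\eta(t)^{2}\ge\lambda^{2}$ of Proposition~\ref{p21} — which under $M(u)=M(Q)$ becomes $\|\nabla u(t)\|_{2}^{2}\ge\lambda^{2}\|\nabla Q\|_{2}^{2}$ — gives
\begin{equation*}
J''(t)\le \bigl(8-2N(p-1)\bigr)\lambda^{2}\|\nabla Q\|_{2}^{2}+4N(p-1)E(u).
\end{equation*}
Now I would substitute the Pohozaev relation $\|\nabla Q\|_{2}^{2}=2\omega_{1}E(Q)$ from \eqref{2.2} and the identity $E(u)/E(Q)=\omega_{1}\lambda^{2}-\omega_{2}\lambda^{N(p-1)/2}$ from \eqref{2.6} (valid for the larger root $\lambda>1$), and simplify using the elementary relations $\bigl(8-2N(p-1)\bigr)\cdot 2\omega_{1}=-4N(p-1)$, $\omega_{1}-1=\omega_{2}$ and $4N(p-1)\omega_{2}=16\omega_{1}$. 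The $\|\nabla u\|_{2}^{2}$-contribution collapses and one is left with
\begin{equation*}
J''(t)\le -\bigl(-16\omega_{1}\lambda^{2}+4N(p-1)\omega_{2}\lambda^{N(p-1)/2}\bigr)E(Q),
\end{equation*}
and the right-hand side equals $-16\omega_{1}\bigl(\lambda^{N(p-1)/2}-\lambda^{2}\bigr)E(Q)<0$ because $\lambda>1$ and $N(p-1)/2>2$. Dividing by this positive constant, $r''(t)\le-1$ on $(T_{-},T_{+})$, while $r(t)\ge0$ throughout and $r(0)>0$.

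Finally I would close with the standard concavity argument: integrating $r''\le-1$ twice from $0$ yields $r(t)\le r(0)+r'(0)t-\tfrac{1}{2}t^{2}$ for $t\ge0$, a downward parabola vanishing at $t=r'(0)+\sqrt{r'(0)^{2}+2r(0)}=:t_{b}$, which is $>0$ since $r(0)>0$. If $u$ existed on all of $[0,t_{b})$ we would get $r(t)<0$ for $t$ slightly below $t_{b}$, contradicting $r\ge0$; hence $T_{+}\le t_{b}$, i.e.\ blow-up occurs in forward time before $t_{b}$. The genuinely delicate point is the first step — the finiteness and twice-differentiability of $t\mapsto\|xu(t)\|_{2}^{2}$ together with the rigorous derivation of the virial identity for merely $H^{1}$ data — which is classical but needs the usual approximation procedure; everything afterwards is the bookkeeping above, whose only real check is that the normalising constant $\bigl(-16\omega_{1}\lambda^{2}+4N(p-1)\omega_{2}\lambda^{N(p-1)/2}\bigr)E(Q)$ is positive, which holds precisely because we are in the second, large-gradient regime $\lambda>1$.
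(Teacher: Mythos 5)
Your argument is correct and is essentially the paper's own proof: the same virial identity, the same substitution of the energy and the Pohozaev relation $\|\nabla Q\|_2^2=2\omega_1E(Q)$, the same use of the second-case bound $\|\nabla u(t)\|_2^2\ge\lambda^2\|\nabla Q\|_2^2$ to force $r''\le-1$, and the same concavity conclusion. The only (harmless) difference is that the paper keeps the quantities as normalized ratios via $\eta(t)$ while you expand the constants explicitly.
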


Note that
$$r(0)=\frac{\|xu_{0}\|_{2}^{2}}{\left(-16\omega_{1}\lambda^{2}+4N(p-1)\omega_{2}\lambda^{\frac{N(p-1)}{2}}\right)E(Q)}$$
and
$$r'(0)=\frac{Im\int(x\cdot\nabla u_{0})\overline{u_{0}}}{\left(-4\omega_{1}\lambda^{2}+N(p-1)\omega_{2}\lambda^{\frac{N(p-1)}{2}}\right)E(Q)}.$$

\begin{proof}
The virial identity gives
$$r''(t)=\frac{4N(p-1)E(u)-\left(2N(p-1)-8\right)\|\nabla u\|_{2}^{2}
}{\left(-16\omega_{1}\lambda^{2}+4N(p-1)\omega_{2}\lambda^{\frac{N(p-1)}{2}}\right)E(Q)}.$$
Identities  \eqref{2.2} imply
$$r''(t)=\frac{4N(p-1)\frac{E(u)}{E(Q)}-2\omega_{1}\left(2N(p-1)-8\right)\frac{\|\nabla u\|_{2}^{2}}{\|\nabla Q\|_{2}^{2}}
}{-16\omega_{1}\lambda^{2}+4N(p-1)\omega_{2}\lambda^{\frac{N(p-1)}{2}}}.$$
By the definition of~$\lambda$~and~$\eta,$~
$$r''(t)=\frac{4N(p-1)(\omega_{1}\lambda^{2}-\omega_{2}\lambda^{\frac{N(p-1)}{2}})-
2\omega_{1}\left(2N(p-1)-8\right)\eta(t)^{2}
}{-16\omega_{1}\lambda^{2}+4N(p-1)\omega_{2}\lambda^{\frac{N(p-1)}{2}}}.$$
Since $\eta(t)\geq \lambda>1,$ we have
$$r''(t)\leq-1,$$
which, by
integrating in time twice, gives
$$r(t)\leq-\frac{1}{2}t^{2}+r'(0)t+r(0).$$
The positive root of the polynomial on the right hand side is ~$t_{b}$~given in the proposition statement.

\end{proof}

The next result is related to the local virial identity. Let ~$\varphi\in C_{c}^{\infty}(\mathbb{R}^{N})$~be radial such that
\[
\varphi(x)=
\begin{cases}
|x|^{2}, &|x|\le 1;\\
0,  & |x|\geq 2.
\end{cases}
\]
For ~$R>0$~define
\begin{equation}\label{3.1}
z_{R}(t)=\int R^{2}\phi(\frac{x}{R})|u(x,t)|^{2} dx.
\end{equation}
Then we can directly calculate the following local virial identity:
\begin{align}\label{3.2}
z''_{R}(t)&=4\int \partial_{j}\partial_{k}\phi(\frac{x}{R})\partial_{j}u\partial_{k}\bar{u} dx
-\int\Delta\phi(\frac{x}{R})|u|^{4} dx-\frac{1}{R^{2}}\int\Delta^{2}\phi(\frac{x}{R})|u|^{2} dx\\
&=\left(4N(p-1)E(u)-\left(2N(p-1)-8\right)\|\nabla u\|_{2}^{2}\right)+A_{R}(u(t))\nonumber,
\end{align}
where for a constant~$C_{1}$~ we can control
\begin{align}\label{3.3}
A_{R}(u(t))\leq C_{1}\left( \frac{1}{R^{2}}\|u\|^{2}_{L^{2}(|x|\geq R)}+\|u\|^{p+1}_{L^{p+1}(|x|\geq R)}\right).
\end{align}

The local virial identity will give another version of Proposition~\ref{p31}~, for which ,  without the assumption
of finite variance, we will assumes that the solution is suitably localized in ~$H^{1}$~for all times. Define
$$\eta_{\geq R}=\frac{\|u\|^{s_{c}(p-1)}_{L^{2}(|x|\geq R)}\|\nabla u\|^{(1-s_{c})(p-1)}_{L^{2}(|x|\geq R)}
}{\|Q\|^{s_{c}(p-1)}_{2}\|\nabla Q\|^{(1-s_{c})(p-1)}_{2}}.$$

\begin{proposition}\label{p32}
Let~$M(u)=M(Q),$~~$E(u)<E(Q)$~and suppose the second case of Proposition~\ref{p21}~holds
(~$\lambda>1$~ is defined in~ \eqref{2.6}). Select~$\gamma$~such that
$$0<\gamma<min\left(2\omega_{1}\left(2N(p-1)-8\right),
4N(p-1)\omega_{2}\lambda^{\frac{N(p-1)-4}{2}}-16\omega_{1}\right).$$~
 Suppose that there is a radius~$R\geq C_{2}\gamma^{-\frac{1}{2}}$~such that for all ~$t,$~
there holds~$\eta_{\geq R}\leq \gamma.$~
Define~$\tilde{r}(t)$~to be the scaled local variance:
$$\tilde{r}(t)=
\frac{z_{R}(t)}{CE(Q)\left(-16\omega_{1}\lambda^{2}+4N(p-1)\omega_{2}\lambda^{\frac{N(p-1)}{2}}-
\gamma\lambda^{2}\right)}$$
( $C$ is an absolute constant determined by $C_1$ and $C_2$).
Then blow-up occurs in forward time before~$t_{b},$~where
 $$t_{b}=\tilde{r}'(0)+\sqrt{\tilde{r}'(0)^{2}+2\tilde{r}(0)}.$$

\end{proposition}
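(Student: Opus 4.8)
The plan is to mimic the finite-variance argument of Proposition~\ref{p31}, but with the localized quantity $z_R(t)$ in place of $\|xu\|_2^2$, using the error control \eqref{3.3} together with the localization hypothesis $\eta_{\ge R}\le\gamma$ to absorb the tail term $A_R(u(t))$ into the main coercive term. First I would start from the local virial identity \eqref{3.2}, which expresses $z_R''(t)$ as the ``clean'' virial functional $4N(p-1)E(u)-(2N(p-1)-8)\|\nabla u\|_2^2$ plus the error $A_R(u(t))$. As in Proposition~\ref{p31}, I would use the identities \eqref{2.2} for $E(Q)$ and the definitions of $\lambda$ (via \eqref{2.6}) and $\eta(t)$ (via \eqref{eta}) to rewrite the clean part, after normalizing $M(u)=M(Q)$, in the form proportional to
\begin{equation*}
4N(p-1)\bigl(\omega_1\lambda^2-\omega_2\lambda^{\frac{N(p-1)}{2}}\bigr)-2\omega_1\bigl(2N(p-1)-8\bigr)\eta(t)^2,
\end{equation*}
which, since the second case of Proposition~\ref{p21} forces $\eta(t)^2\ge\lambda^2>1$, is bounded above by $-\bigl(2\omega_1(2N(p-1)-8)-4N(p-1)\omega_2\lambda^{\frac{N(p-1)-4}{2}}+16\omega_1\bigr)\lambda^2$ times a positive constant, i.e.\ by a strictly negative multiple of $E(Q)$ times $\bigl(-16\omega_1\lambda^2+4N(p-1)\omega_2\lambda^{\frac{N(p-1)}{2}}\bigr)$ minus a $\gamma$-sized loss.

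The core new step is estimating $A_R(u(t))$. From \eqref{3.3}, $A_R(u(t))\le C_1\bigl(R^{-2}\|u\|_{L^2(|x|\ge R)}^2+\|u\|_{L^{p+1}(|x|\ge R)}^{p+1}\bigr)$. For the $L^{p+1}$ tail I would apply the Gagliardo--Nirenberg inequality \eqref{2.1} on the exterior region $|x|\ge R$, which yields $\|u\|_{L^{p+1}(|x|\ge R)}^{p+1}\le C_{GN}\|\nabla u\|_{L^2(|x|\ge R)}^{\frac{N(p-1)}{2}}\|u\|_{L^2(|x|\ge R)}^{2-\frac{(N-2)(p-1)}{2}}$; rewriting the right side in terms of $\eta_{\ge R}$ via \eqref{2.3} gives a bound of the shape $C\,\eta_{\ge R}\,\|\nabla Q\|_2^2$ (using that the exponents in $\eta_{\ge R}$ are arranged precisely so the $\|u\|$-tail factors combine into $\eta_{\ge R}$ times powers of $Q$, and then absorbing the surplus powers of $\eta_{\ge R}$ using $\eta_{\ge R}\le\gamma\le1$). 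For the $R^{-2}$ term I would use $\|u\|_{L^2(|x|\ge R)}^2\le M(u)=M(Q)$ together with the hypothesis $R\ge C_2\gamma^{-1/2}$, so that $R^{-2}\|u\|_{L^2(|x|\ge R)}^2\le C_2^{-2}\gamma\,M(Q)$. Combining, $A_R(u(t))\le C\gamma\,E(Q)\lambda^2$ for a constant $C$ depending only on $C_1,C_2$, which is exactly the $\gamma\lambda^2$ correction appearing in the denominator of $\tilde r(t)$.

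Putting the two estimates together and choosing the absolute constant $C$ in the definition of $\tilde r$ large enough, I get $\tilde r''(t)\le-1$ for all $t$ in the maximal forward interval, provided the denominator $-16\omega_1\lambda^2+4N(p-1)\omega_2\lambda^{\frac{N(p-1)}{2}}-\gamma\lambda^2$ is positive --- which is guaranteed by the upper restriction on $\gamma$, namely $\gamma<4N(p-1)\omega_2\lambda^{\frac{N(p-1)-4}{2}}-16\omega_1$ (multiply by $\lambda^2$), while the lower piece of the $\min$ ensures the coercivity of the main term survives the loss. Integrating $\tilde r''\le-1$ twice gives $\tilde r(t)\le-\tfrac12 t^2+\tilde r'(0)t+\tilde r(0)$, whose positive root is $t_b=\tilde r'(0)+\sqrt{\tilde r'(0)^2+2\tilde r(0)}$; since $\tilde r(t)\ge0$ by construction ($z_R\ge0$ and the denominator is positive), the solution cannot exist in $H^1$ past $t_b$, which proves the claim.

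The main obstacle I anticipate is the bookkeeping in the $A_R$ estimate: one must track the precise powers of $\eta_{\ge R}$, $\|u\|_{L^2(|x|\ge R)}$, $\|\nabla u\|_{L^2(|x|\ge R)}$, $\|Q\|_2$, $\|\nabla Q\|_2$ coming out of \eqref{2.1}--\eqref{2.3} and verify that the definition of $\eta_{\ge R}$ (with the exponents $s_c(p-1)$ and $(1-s_c)(p-1)$) is exactly the combination that makes the Gagliardo--Nirenberg tail collapse to a clean multiple of $\gamma\,E(Q)$ without any residual large gradient factor; in the general $0<s_c<1$ case these exponents are $p$-dependent, so unlike the $3$D cubic case one has to check that no stray power of $\|\nabla u\|_{L^2(|x|\ge R)}$ (which is not a priori bounded) is left over. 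Getting the interplay between the two parts of the $\min$ defining $\gamma$ and the positivity of the denominator right is the other delicate point, but it is a finite computation with the explicit constants $\omega_1,\omega_2$.
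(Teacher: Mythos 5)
Your strategy is exactly the paper's: take the local virial identity \eqref{3.2}, rewrite the ``clean'' part as in Proposition~\ref{p31}, control $A_R(u(t))$ via the exterior Gagliardo--Nirenberg inequality plus the hypotheses $\eta_{\geq R}\leq\gamma$ and $R\geq C_2\gamma^{-1/2}$, conclude $\tilde r''\leq-1$, and integrate twice. However, there is one concrete quantitative slip in your key estimate of the $L^{p+1}$ tail. You claim the Gagliardo--Nirenberg right-hand side ``collapses'' to a bound of the shape $C\,\eta_{\geq R}\,\|\nabla Q\|_2^2$, hence $A_R\leq C\gamma E(Q)\lambda^2$, a $t$-independent constant. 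A homogeneity count shows this cannot be right: the GN right-hand side is homogeneous of degree $p+1$ in $u$, while $\eta_{\geq R}$ is homogeneous of degree $p-1$, so after extracting $\eta_{\geq R}$ (times $Q$-norms) an uncancelled factor of degree two remains, namely $\|\nabla u\|_{L^2(|x|\geq R)}^2$ --- precisely the ``stray power'' you worried about, and it is \emph{not} a priori bounded (in the intended blow-up scenario it is unbounded). The paper's estimate \eqref{3.4} keeps this factor and converts it via $M(u)=M(Q)$ into $\eta(t)^2\|\nabla Q\|_2^2$, yielding $A_R\lesssim\gamma\,\eta(t)^2E(Q)$ rather than $\gamma\lambda^2E(Q)$.

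This correction is not cosmetic: with a loss that grows like $\gamma\eta(t)^2$, one must absorb it into the coercive term $-2\omega_1(2N(p-1)-8)\eta(t)^2$, and that is exactly what the first branch of the $\min$ defining $\gamma$ (i.e.\ $\gamma<2\omega_1(2N(p-1)-8)$) is for; the coefficient of $\eta(t)^2$ stays negative, the supremum over $\eta\geq\lambda$ is attained at $\eta=\lambda$, and one lands on $-\left(-16\omega_1\lambda^2+4N(p-1)\omega_2\lambda^{\frac{N(p-1)}{2}}-\gamma\lambda^2\right)$, i.e.\ exactly minus the denominator of $\tilde r$. You do invoke that branch of the $\min$ to ``survive the loss,'' but with your stated constant bound on $A_R$ it would be superfluous --- a sign the bookkeeping went astray. (Incidentally, when you do the bookkeeping you will find that for \eqref{3.4} to hold as an identity of exponents, the roles of $s_c(p-1)$ and $(1-s_c)(p-1)$ in the displayed definition of $\eta_{\geq R}$ must be read with $\|u\|_{L^2(|x|\geq R)}$ carrying the exponent $2-\frac{(N-2)(p-1)}{2}=(1-s_c)(p-1)$; in the 3D cubic case both exponents equal $1$, which is why this is invisible there.) With the corrected $A_R$ bound, the remainder of your argument --- positivity of the denominator from the second branch of the $\min$, $\tilde r\geq0$, integration --- closes exactly as in the paper.
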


\begin{proof}
By the local virial identity and the same steps in the proof of Proposition~\ref{p31}~

$$\tilde{r}''(t)=\frac{1}{C}\frac{4N(p-1)(\omega_{1}\lambda^{2}-\omega_{2}\lambda^{\frac{N(p-1)}{2}})-
2\omega_{1}\left(2N(p-1)-8\right)\eta(t)^{2}+A_{R}(u(t))/E(Q)
}{-16\omega_{1}\lambda^{2}+4N(p-1)\omega_{2}\lambda^{\frac{N(p-1)}{2}}-
\gamma\lambda^{2}}$$
By the exterior Gagliardo-Nirenberg inequality, we have
\begin{align}\label{3.4}
\|u\|_{L^{p+1}(|x|\geq R)}^{p+1}\leq C_{GN}\|\nabla
u\|_{L^{2}(|x|\geq R)}^{\frac{N(p-1)}{2}}\|u\|_{L^{2}(|x|\geq R)}^{2-\frac{(N-2)(p-1)}{2}}
\leq\|\nabla u\|_{2}^{2}\eta_{\geq R}\leq\|\nabla Q\|_{2}^{2}\gamma \eta(t)^{2}.
\end{align}
This combined with
\begin{align}\label{3.5}
\frac{1}{R^{2}}\|u\|^{2}_{L^{2}(|x|\geq R)}\leq C_{2}^{-2}\|Q\|_{2}^{2}\gamma\leq C_{2}^{-2}\|Q\|_{2}^{2}\gamma \eta(t)^{2}
\end{align}
gives
\begin{align*}
\tilde{r}''(t)&\leq\frac{1}{C}\frac{4N(p-1)(\omega_{1}\lambda^{2}-\omega_{2}\lambda^{\frac{N(p-1)}{2}})-
2\omega_{1}\left(2N(p-1)-8\right)\eta(t)^{2}+C_{3}\frac{(\|Q\|_{2}^{2}+\|\nabla Q\|_{2}^{2})}{E(Q)}\gamma\eta(t)^{2}
}{-16\omega_{1}\lambda^{2}+4N(p-1)\omega_{2}\lambda^{\frac{N(p-1)}{2}}-
\gamma\lambda^{2}}\\
&\leq\frac{1}{C}\frac{C_{4}\left(4N(p-1)(\omega_{1}\lambda^{2}-\omega_{2}\lambda^{\frac{N(p-1)}{2}})-
2\omega_{1}\left(2N(p-1)-8\right)\eta(t)^{2}+\gamma\eta(t)^{2}\right)}{-16\omega_{1}\lambda^{2}+4N(p-1)\omega_{2}\lambda^{\frac{N(p-1)}{2}}-
\gamma\lambda^{2}}.
\end{align*}
Taking   the constant~$C=C_{4},$  since $\eta(t)\geq \lambda>1$ and from the selection of $\gamma,$ we obtain
$$r''(t)\leq-1.$$
The remainder of the argument is the same
 as in the proof of Proposition \ref{p31} .

\end{proof}

Finally, we will give the finite blow-up time for radial solutions before which we would like to introduce the Radial Gagliardo-Nirenberb inequality:
\begin{lemma}\label{raphael}\cite{raphael}
(Radial Gagliardo-Nirenberb inequality). For all~$\delta>0$,~there exists a constant $C_{\delta}>0$ such that for all $u\in \dot{H}^{s_{c}}$
with radial symmetry, and for all $R>0,$ we have
$$\int_{|x|\geq R}|u|^{p+1}dx\leq\delta\int_{|x|\geq R}|\nabla u|^{2}dx
+\frac{C_{\delta}}{R^{2(1-s_{c})}}\left[\left(\rho(u,R)\right)^{\frac{2(p+3)}{5-p}}+\left(\rho(u,R)\right)^{\frac{p+1}{2}}\right],$$
where ~$\rho(u,R)=\sup_{R'\geq R}\frac{1}{(R')^{2s_{c}}}\int_{R'\leq|x|\leq2R'}|u|^{2}dx.$~

\end{lemma}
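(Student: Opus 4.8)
\emph{Outline of the approach.} The plan is to reduce the inequality to a \emph{localized} radial Sobolev (Strauss-type) estimate on a single dyadic annulus and then sum a geometric series, absorbing the gradient term via H\"older in the annulus index and Young's inequality. First I would decompose $\{|x|\ge R\}=\bigcup_{j\ge 0}A_{j}$ with $A_{j}=\{2^{j}R\le|x|\le 2^{j+1}R\}$ and $r_{j}=2^{j}R$, so that $\int_{|x|\ge R}|u|^{p+1}=\sum_{j\ge 0}\int_{A_{j}}|u|^{p+1}$, and on each annulus bound $\int_{A_{j}}|u|^{p+1}\le\|u\|_{L^{\infty}(A_{j})}^{p-1}\int_{A_{j}}|u|^{2}$, noting that $\int_{A_{j}}|u|^{2}\le r_{j}^{2s_{c}}\,\rho(u,R)$ is immediate from the definition of $\rho$.

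\emph{Radial Sobolev on an annulus.} The key local ingredient is that for radial $u$ and $\sigma\in A_{j}$,
$$|u(\sigma)|^{2}\le\frac{C}{r_{j}^{N}}\int_{A_{j}}|u|^{2}+\frac{C}{r_{j}^{N-1}}\Big(\int_{A_{j}}|u|^{2}\Big)^{1/2}\Big(\int_{A_{j}}|\nabla u|^{2}\Big)^{1/2},$$
obtained by choosing a base radius $s_{0}\in A_{j}$ at which $s_{0}^{N-1}|u(s_{0})|^{2}$ is no larger than the $r_{j}$-average of $s^{N-1}|u(s)|^{2}$, then integrating $\partial_{r}|u|^{2}$ from $s_{0}$ to $\sigma$ and applying Cauchy--Schwarz with the weight $r^{N-1}$. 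Feeding in $\int_{A_{j}}|u|^{2}\le r_{j}^{2s_{c}}\rho$ gives $\|u\|_{L^{\infty}(A_{j})}^{2}\le C r_{j}^{2s_{c}-N}\rho+C r_{j}^{s_{c}-N+1}\rho^{1/2}G_{j}^{1/2}$, with $G_{j}=\int_{A_{j}}|\nabla u|^{2}$.

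\emph{Summation and Young.} Inserting these into $\int_{A_{j}}|u|^{p+1}\le\|u\|_{L^{\infty}(A_{j})}^{p-1}\,r_{j}^{2s_{c}}\rho$ and using $(a+b)^{\theta}\le C(a^{\theta}+b^{\theta})$, I would split each annular contribution into a ``$\rho$-only'' piece and a ``$\rho$-and-gradient'' piece. A bookkeeping of homogeneities, using $s_{c}=\frac{N}{2}-\frac{2}{p-1}$ (so $2s_{c}-N=-\frac{4}{p-1}$), shows that the exponent of $r_{j}$ is $-2(1-s_{c})$ in the first piece and a negative number (whose multiple by $\frac{4}{5-p}$ is again $-2(1-s_{c})$) in the second; since $s_{c}<1$ both dyadic sums converge and the common prefactor is $R^{-2(1-s_{c})}$. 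The first piece sums to $CR^{-2(1-s_{c})}\rho^{(p+1)/2}$. For the second piece I would apply H\"older in $j$ with the conjugate exponents $\frac{4}{p-1}$ and $\frac{4}{5-p}$ (this is where $p<5$ enters) to extract $\big(\sum_{j}G_{j}\big)^{(p-1)/4}\le\big(\int_{|x|\ge R}|\nabla u|^{2}\big)^{(p-1)/4}$ while leaving a convergent $r_{j}$-series, and then Young's inequality to trade $\big(\int_{|x|\ge R}|\nabla u|^{2}\big)^{(p-1)/4}$ for $\delta\int_{|x|\ge R}|\nabla u|^{2}+C_{\delta}(\,\cdot\,)^{4/(5-p)}$; what remains is $C_{\delta}R^{-2(1-s_{c})}$ times a power of $\rho$, which the homogeneity count fixes as the exponent $\frac{2(p+3)}{5-p}$ displayed in the statement. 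Collecting the two pieces yields the assertion.

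\emph{Main obstacle.} The delicate point is getting the radial pointwise bound and the exponent bookkeeping exactly right. One needs an estimate that is \emph{local} enough that its $L^{2}$ part is controlled by the supremum quantity $\rho$ rather than by the global tail $\int_{|x|\ge r}|u|^{2}$ (which is useless here, since $s_{c}>0$ makes $\sum_{j}r_{j}^{2s_{c}}$ diverge), yet strong enough that, after substituting the definition of $\rho$ and the relation $s_{c}=\frac{N}{2}-\frac{2}{p-1}$, every resulting power of $r_{j}$ is strictly negative. Ensuring simultaneously that the two geometric series converge, that the prefactor collapses to precisely $R^{-2(1-s_{c})}$, that the H\"older/Young exponents $\frac{4}{p-1},\frac{4}{5-p}$ are admissible throughout the range $0<s_{c}<1$, and that the powers of $\rho$ come out to $\frac{p+1}{2}$ and $\frac{2(p+3)}{5-p}$, is the heart of the argument.
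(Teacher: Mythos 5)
Your argument is essentially correct and, notably, supplies a proof where the paper gives none: Lemma~\ref{raphael} is simply quoted from \cite{raphael} without proof, so the only thing to compare against is the citation. Your route --- dyadic annuli $A_{j}$, the annulus-localized Strauss bound
$\|u\|_{L^{\infty}(A_{j})}^{2}\le C r_{j}^{-N}\int_{A_{j}}|u|^{2}+C r_{j}^{1-N}\bigl(\int_{A_{j}}|u|^{2}\bigr)^{1/2}\bigl(\int_{A_{j}}|\nabla u|^{2}\bigr)^{1/2}$, the substitution $\int_{A_{j}}|u|^{2}\le r_{j}^{2s_{c}}\rho$, and summation with H\"older/Young in $j$ --- is exactly the mechanism behind the original lemma, and your observation that a genuinely local $L^{2}$ input is forced because $\sum_{j}r_{j}^{2s_{c}}$ diverges is the right diagnosis of why the global Strauss estimate is useless here.

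One piece of the bookkeeping does not come out as you assert, though the lemma survives. The gradient-free piece of $\int_{A_{j}}|u|^{p+1}$ is indeed $C r_{j}^{-2(1-s_{c})}\rho^{(p+1)/2}$, but the mixed piece is $C r_{j}^{e}\rho^{(p+3)/4}G_{j}^{(p-1)/4}$ with $e=-\frac{(1-s_{c})(5-p)}{2}$, and after H\"older and Young with exponents $\frac{4}{p-1},\frac{4}{5-p}$ the resulting power of $\rho$ is $\frac{p+3}{4}\cdot\frac{4}{5-p}=\frac{p+3}{5-p}$, not the $\frac{2(p+3)}{5-p}$ displayed in the statement (the displayed exponent corresponds to the convention in which $\rho$ is built from $\|u\|_{L^{2}}$ on the annulus rather than $\|u\|_{L^{2}}^{2}$, as in the cited source). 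This is harmless: since $(p-1)^{2}\ge 0$ gives $\frac{p+1}{2}\le\frac{p+3}{5-p}\le\frac{2(p+3)}{5-p}$, one has $\rho^{\frac{p+3}{5-p}}\le\rho^{\frac{p+1}{2}}+\rho^{\frac{2(p+3)}{5-p}}$ for every $\rho>0$, so your (in fact sharper) conclusion implies the stated one; you should add this one line rather than claim the homogeneity count produces $\frac{2(p+3)}{5-p}$ directly. Two minor caveats worth recording: the H\"older/Young exponents require $1<p<5$, which the statement already presupposes but which the paper's standing hypotheses do not guarantee when $N=2$; and the selection of the base radius $s_{0}$ should be made after reducing to the case $\int_{|x|\ge R}|\nabla u|^{2}<\infty$ (otherwise the inequality is vacuous), so that $u$ is continuous on each annulus.
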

Note that this lemma implies that
for all~$\delta>0$,~there exists a constant~$C_{\delta}>0$~and~$C_{Q}>0$~such that for all~$u\in \dot{H}^{s_{c}}$~
with radial symmetry and~$M(u)=M(Q)$~, and for all $R>0,$ we have
\begin{align}\label{radialsobolev}
\int_{|x|\geq R}|u|^{p+1}dx\leq\delta\int_{|x|\geq R}|\nabla u|^{2}dx
+\frac{C_{\delta}C_{Q}}{R^{2(1-s_{c})}}.
\end{align}

\begin{proposition}\label{p33}
Let~$M(u)=M(Q),$~~$E(u)<E(Q)$~and suppose the second case of Proposition~\ref{p21}~holds
(~$\lambda>1$~ is defined in~ \eqref{2.6}.) Suppose that ~$u$~is radial.
Select~$\gamma$~such that
$$0<\gamma<min\left(2\omega_{1}\left(2N(p-1)-8\right),
4N(p-1)\omega_{2}\lambda^{\frac{N(p-1)-4}{2}}-16\omega_{1}\right).$$~Then for
$$R>\max\left(\gamma^{-\frac{1}{2}},\left(\frac{2C_{\gamma}}{-16\omega_{1}\lambda^{2}+4N(p-1)\omega_{2}\lambda^{\frac{N(p-1)}{2}}-
\gamma\lambda^{2}}\right)^{\frac{1}{2(1-s_{c})}}\right)$$
we define~$\tilde{r}(t)$~to be the scaled local variance:
$$\tilde{r}(t)=
\frac{z_{R}(t)}{\tilde{C}_{Q}E(Q)\left(-16\omega_{1}\lambda^{2}+4N(p-1)\omega_{2}\lambda^{\frac{N(p-1)}{2}}-
\gamma\lambda^{2}\right)}$$
(where the constant  $\tilde{C}_{Q}$ is dependent on $Q$  determined by $C_\gamma$ and $C_Q$ in \eqref{radialsobolev}).
Then blow-up occurs in forward time before~$t_{b},$~where
 $$t_{b}=\tilde{r}'(0)+\sqrt{\tilde{r}'(0)^{2}+2\tilde{r}(0)}.$$

\end{proposition}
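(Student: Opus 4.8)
The plan is to run the proof of Proposition~\ref{p32} almost verbatim, the only structural change being that the a priori exterior-localization hypothesis ``$\eta_{\ge R}\le\gamma$ for all $t$'' is dropped and replaced by the radial Gagliardo--Nirenberg estimate \eqref{radialsobolev} (itself a consequence of Lemma~\ref{raphael}), which holds for free once $u$ is radial and $M(u)=M(Q)$. Concretely: I would start from the localized virial identity \eqref{3.2}--\eqref{3.3}; carry out the same algebraic reduction as in Propositions~\ref{p31} and~\ref{p32}, using the identities \eqref{2.2} together with the definitions of $\lambda$ (via \eqref{2.6}) and $\eta(t)$ (via \eqref{eta}), which yields
\[
\tilde r''(t)=\frac{1}{\tilde C_Q}\cdot\frac{4N(p-1)\bigl(\omega_1\lambda^2-\omega_2\lambda^{\frac{N(p-1)}{2}}\bigr)-2\omega_1\bigl(2N(p-1)-8\bigr)\eta(t)^2+A_R(u(t))/E(Q)}{-16\omega_1\lambda^2+4N(p-1)\omega_2\lambda^{\frac{N(p-1)}{2}}-\gamma\lambda^2}\, ;
\]
then bound the error $A_R(u(t))$; deduce $\tilde r''(t)\le-1$; and finally integrate twice and read off $t_b$ exactly as in Proposition~\ref{p31}.

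The only genuinely new step is the estimate of $A_R(u(t))$, and this is where I would spend the effort. For the $L^2$ piece in \eqref{3.3} I would use mass conservation and $R>\gamma^{-1/2}$: $R^{-2}\|u\|_{L^2(|x|\ge R)}^2\le R^{-2}\|Q\|_2^2\le\gamma\|Q\|_2^2\le\gamma\|Q\|_2^2\eta(t)^2$, just as in \eqref{3.5}. For the $L^{p+1}$ piece I would invoke \eqref{radialsobolev} with a parameter $\delta$ of size comparable to $\gamma$, together with $\|\nabla u\|_{L^2(|x|\ge R)}^2\le\|\nabla u\|_2^2=\|\nabla Q\|_2^2\eta(t)^2$, to get $\|u\|_{L^{p+1}(|x|\ge R)}^{p+1}\le C\gamma\|\nabla Q\|_2^2\eta(t)^2+C_\gamma R^{-2(1-s_c)}$. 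Altogether this gives an error of the form $A_R(u(t))\le C\gamma\bigl(\|Q\|_2^2+\|\nabla Q\|_2^2\bigr)\eta(t)^2+C\,C_\gamma R^{-2(1-s_c)}$. The $\eta(t)^2$-proportional part is absorbed into the numerator exactly as in Proposition~\ref{p32}, after enlarging $\tilde C_Q$ and using that $\gamma$ is chosen below $2\omega_1(2N(p-1)-8)$ (the second constraint $\gamma<4N(p-1)\omega_2\lambda^{\frac{N(p-1)-4}{2}}-16\omega_1$ keeps the denominator positive). The one feature absent from Proposition~\ref{p32} is the $\eta$-independent remainder $C_\gamma R^{-2(1-s_c)}$; but the lower bound on $R$ in the statement is tailored so that $C_\gamma R^{-2(1-s_c)}<\tfrac12\bigl(-16\omega_1\lambda^2+4N(p-1)\omega_2\lambda^{\frac{N(p-1)}{2}}-\gamma\lambda^2\bigr)$, so after dividing by the denominator this term is bounded and can be swallowed by a suitable final choice of $\tilde C_Q$ (depending on $C_\gamma$ and $C_Q$). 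Since $\eta(t)\ge\lambda>1$ on the whole maximal interval by the second case of Proposition~\ref{p21}, the same arithmetic as in Proposition~\ref{p32} then yields $\tilde r''(t)\le-1$.

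From $\tilde r''(t)\le-1$ the conclusion is immediate: integrating twice gives $\tilde r(t)\le-\tfrac12 t^2+\tilde r'(0)t+\tilde r(0)$, and since $z_R\ge0$ forces $\tilde r(t)\ge0$, the solution cannot persist beyond the positive root $t_b=\tilde r'(0)+\sqrt{\tilde r'(0)^2+2\tilde r(0)}$, i.e.\ finite-time blow-up (with the $\dot H^{s_c}$-subcritical rate recorded just before Proposition~\ref{p21'} if a quantitative version is wanted). I expect the main obstacle to be organizational rather than conceptual: one must order the choices of $\delta$, $\gamma$, $R$ and then $\tilde C_Q$ so that the $\eta(t)^2$-error and the $R$-decaying error are absorbed simultaneously while the denominator stays positive. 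The genuine new input, namely the decay in $R$ of the exterior $L^{p+1}$ mass for radial functions, is exactly Lemma~\ref{raphael} / \eqref{radialsobolev}, which we are free to quote.
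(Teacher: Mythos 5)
Your proposal matches the paper's proof: the paper likewise modifies Proposition~\ref{p32} only in the two exterior estimates \eqref{3.4}--\eqref{3.5}, replacing them by the radial Gagliardo--Nirenberg bound \eqref{radialsobolev} with $\delta=\gamma$ and mass conservation, absorbing the extra $C_\gamma R^{-2(1-s_c)}$ remainder via the stated lower bound on $R$ and the choice $\tilde{C}_Q=2C_Q$, and then concluding $\tilde r''\le-1$ exactly as in Proposition~\ref{p31}. No substantive difference.
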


\begin{proof}
We modify the proof of Proposition \ref{p32} only in  \eqref{3.4} and \eqref{3.5}.
From the
Radial Gagliardo-Nirenberb inequality \eqref{radialsobolev} with $\delta=\gamma$, we obtain
\begin{align*}\label{}
\|u\|_{L^{p+1}(|x|\geq R)}^{p+1}\leq C_{Q}\left(\gamma\eta(t)^{2}+\frac{C_{\gamma}}{R^{2(1-s_{c})}}\right).
\end{align*}
If taking $C_Q$ to stand for the variant  constants dependent on $Q$, we have
\begin{align*}\label{}
\frac{1}{R^{2}}\|u\|^{2}_{L^{2}(|x|\geq R)}\leq \frac{C_{Q}}{R^{2}}\leq\frac{C_{Q}\eta(t)^{2}}{R^{2}}
\leq C_{Q}\gamma\eta(t)^{2}.
\end{align*}
Thus
\begin{align*}
\tilde{r}''(t)
&\leq C_{Q}\frac{4N(p-1)(\omega_{1}\lambda^{2}-\omega_{2}\lambda^{\frac{N(p-1)}{2}})-
2\omega_{1}\left(2N(p-1)-8\right)\eta(t)^{2}+\gamma\eta(t)^{2}+\frac{C_{\gamma}}{R^{2(1-s_{c})}}
}{\tilde{C}_{Q}\left(-16\omega_{1}\lambda^{2}+4N(p-1)\omega_{2}\lambda^{\frac{N(p-1)}{2}}-
\gamma\lambda^{2}\right)}\\
&\leq C_{Q}\frac{\left(4N(p-1)(\omega_{1}\lambda^{2}-\omega_{2}\lambda^{\frac{N(p-1)}{2}})-
2\omega_{1}\left(2N(p-1)-8\right)\eta(t)^{2}+\gamma\eta(t)^{2}\right)+\frac{C_{\gamma}}{R^{2(1-s_{c})}}
}{\tilde{C}_{Q}\left(-16\omega_{1}\lambda^{2}+4N(p-1)\omega_{2}\lambda^{\frac{N(p-1)}{2}}-
\gamma\lambda^{2}\right)}.
\end{align*}
We only have to select~$\tilde{C}_{Q}=2C_{Q}$~in the assumptions. Then since~$\eta(t)\geq\lambda>1,$~ the restriction of~$\gamma$~and~$R$~gives
$$r''(t)\leq-1,$$
and we conclude the proof with the same steps as in the proof of~Proposition \ref{p31}.

\end{proof}

\section{Variational Characterization of the Ground State}
This section deals with the variation characterization of~$Q$~stated in the above section.
It is an important preparation  for the  ``near boundary case" in Section~5. For now, we will write~$u=u(x)$~ as the
time dependence plays no role in what follows.

\begin{proposition}\label{p41}
There exists a function~$\epsilon(\rho)$~with~$\epsilon(\rho)\rightarrow 0$~as~$\rho\rightarrow 0$~
such that the following holds:
suppose there is~$\lambda>0$~satisfying
\begin{equation}\label{4.1}
\left|\frac{M(u)^{\frac{1-s_{c}}{s_{c}}}E(u)}{M(Q)^{\frac{1-s_{c}}{s_{c}}}E(Q)}
-\left(\omega_{1}\lambda^{2}-\omega_{2}\lambda^{\frac{N(p-1)}{2}}\right)\right|
\leq \rho \lambda^{\frac{N(p-1)}{2}},
\end{equation}
and
\begin{equation}\label{4.2}
\left|\frac{\|u\|_{2}^{\frac{1-s_{c}}{s_{c}}}\|\nabla u\|_{2}}{\|Q\|_{2}^{\frac{1-s_{c}}{s_{c}}}\|\nabla Q\|_{2}}
-\lambda\right|
\leq \rho
\begin{cases}\lambda, & \lambda\geq 1\\
\lambda^{2}, & \lambda\leq 1.
\end{cases}
\end{equation}
Then there exists~$\theta\in\mathbb{R}$~and~$x_{0}\in\mathbb{R}^{N}$~such that
\begin{equation}\label{4.3}
\left\|u-e^{i\theta}\lambda^{\frac{N}{2}}\beta^{-\frac{2}{p-1}}Q\left(\lambda(\beta^{-1}\cdot-x_{0})
\right)
\right\|_{2}\leq\beta^{\frac{N}{2}-\frac{2}{p-1}}\epsilon(\rho)
\end{equation}
and
\begin{equation}\label{4.4}
\left\|\nabla\left[u-e^{i\theta}\lambda^{\frac{N}{2}}\beta^{-\frac{2}{p-1}}Q\left(\lambda(\beta^{-1}\cdot-x_{0})
\right)\right]
\right\|_{2}\leq\lambda\beta^{\frac{N}{2}-\frac{2}{p-1}-1}\epsilon(\rho),
\end{equation}
where~$\beta=(\frac{M(u)}{M(Q)})^{\frac{p-1}{N(p-1)-4}}.$~

\end{proposition}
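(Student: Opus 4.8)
The statement is a compactness/rigidity result: a function whose mass--energy and gradient--mass quantities are within $\rho$ of the corresponding values of a rescaled ground state must itself be $\epsilon(\rho)$-close (in $\dot H^1\cap L^2$, after modulation) to such a rescaled ground state. The natural strategy is a concentration--compactness argument by contradiction. Suppose the conclusion fails: then there exist $\rho_n\to 0$, functions $u_n$, and scales $\lambda_n>0$ satisfying \eqref{4.1}--\eqref{4.2} with $\rho=\rho_n$, yet for which no modulation parameters $(\theta,x_0)$ make \eqref{4.3}--\eqref{4.4} hold with the right-hand sides computed from $\epsilon(\rho_n)\to 0$. The first step is to normalize: using the scaling $u(x)\mapsto \mu^{N/2}\sigma^{-2/(p-1)}u(\mu(\sigma^{-1}x))$ together with the definition $\beta=(M(u)/M(Q))^{(p-1)/(N(p-1)-4)}$, one reduces to the case $M(u_n)=M(Q)$ and $\lambda_n=1$ — that is, one may assume $\|u_n\|_2=\|Q\|_2$, $\|\nabla u_n\|_2\to\|\nabla Q\|_2$, and $E(u_n)\to E(Q)$. (The role of $\beta$ and the two-sided bound in \eqref{4.2} with the $\lambda$ versus $\lambda^2$ dichotomy is precisely to make this normalization legitimate uniformly; I would record this reduction carefully since it is where the exact exponents enter.)

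After normalization, $\{u_n\}$ is bounded in $H^1$ and is a minimizing-type sequence for the Gagliardo--Nirenberg functional: indeed from $\|\nabla u_n\|_2\to\|\nabla Q\|_2$, $\|u_n\|_2=\|Q\|_2$, $E(u_n)\to E(Q)$ and the identities \eqref{2.2}, one gets $\|u_n\|_{p+1}^{p+1}\to\|Q\|_{p+1}^{p+1}$, hence equality is asymptotically attained in \eqref{2.1}:
$$
\frac{\|u_n\|_{p+1}^{p+1}}{\|\nabla u_n\|_2^{N(p-1)/2}\|u_n\|_2^{2-(N-2)(p-1)/2}}\longrightarrow C_{GN}.
$$
The plan is then to invoke the standard variational characterization of $Q$ as the (unique up to symmetries) optimizer of \eqref{2.1}: apply a profile decomposition (Keraani-type, which the introduction already flags as a tool, or the classical concentration--compactness of Lions) to $\{u_n\}$; the asymptotic saturation of the sharp constant forces a single profile to carry all the mass and all the gradient, with the remainder and all other profiles vanishing in $H^1$; this single profile must itself optimize \eqref{2.1}, hence equals $e^{i\theta}\mu^{N/2}Q(\mu(\cdot-y))$ for some $\theta,\mu,y$, and the constraints $\|u_n\|_2\to\|Q\|_2$, $\|\nabla u_n\|_2\to\|\nabla Q\|_2$ pin down $\mu\to 1$. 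Translating back, this yields $\|u_n - e^{i\theta_n}Q(\cdot - x_n)\|_{H^1}\to 0$, contradicting the negation of \eqref{4.3}--\eqref{4.4} once one undoes the normalization (the factors $\lambda\beta^{N/2-2/(p-1)-1}$ etc. in \eqref{4.3}--\eqref{4.4} are exactly what the inverse scaling produces).

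The main obstacle is the step extracting a \emph{single} concentrated profile from the Gagliardo--Nirenberg-minimizing sequence and showing the remainder vanishes: one must quantify, via the profile decomposition, that if mass and gradient split among several profiles then the sharp constant is \emph{not} saturated (strict subadditivity for the GN functional), so asymptotic saturation forces dichotomy and vanishing to be ruled out. This requires the decoupling of the $L^{p+1}$ norm along the profile decomposition together with the scaling behaviour of each term; it is the analogue of the compactness argument in Kenig--Merle / Holmer--Roudenko and must be re-run in the present $L^2$-supercritical, $H^1$-subcritical range with the exponents $N(p-1)/2$ in place of the cubic value. A secondary technical point is bookkeeping the two separate regimes $\lambda\ge 1$ and $\lambda\le 1$ in \eqref{4.2}: for $\lambda$ small the allowed error is $O(\rho\lambda^2)$ rather than $O(\rho\lambda)$, which is what guarantees that after dividing by $\lambda$ the normalized quantity $\|\nabla u_n\|_2/\|\nabla Q\|_2$ still converges to $1$; I would isolate this as a short lemma before running the contradiction argument.
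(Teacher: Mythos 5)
Your proposal is correct and follows essentially the same route as the paper: rescale first by $\beta$ to normalize the mass and then by $\lambda$ to normalize the gradient, use the Pohozaev identities \eqref{2.2} together with \eqref{4.1}--\eqref{4.2} to deduce that $\|u\|_{2}$, $\|\nabla u\|_{2}$ and $\|u\|_{p+1}$ are all within $O(\rho)$ of the corresponding norms of $Q$, and then invoke the variational characterization of $Q$ as the unique optimizer of the Gagliardo--Nirenberg inequality up to phase and translation. The only substantive difference is that you propose to reprove that characterization from scratch by a profile-decomposition/strict-subadditivity argument, whereas the paper simply quotes it as Proposition~\ref{p44} from Lions's concentration-compactness paper; your plan for that step is sound but unnecessary, since the quantitative statement you need (three norms close to those of $Q$ implies $H^{1}$-closeness to the orbit of $Q$) is exactly the cited result, and citing it shortens the proof to the elementary scaling and algebraic bookkeeping you have already outlined.
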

\begin{remark}\label{r43}
If we let~$v(x)=\beta^{\frac{2}{p-1}}u(\beta x),$~then~$M(v)=\beta^{\frac{4}{p-1}-N}M(u)=M(Q),$~
and we can then restate Proposition~\ref{p41}~as follows:

Suppose ~$\|v\|_{2}=\|Q\|_{2}$~and there is~$\lambda>0$~such that

\begin{equation}\label{4.5}
\left|\frac {E(v)}{E(Q)}
-\left(\omega_{1}\lambda^{2}-\omega_{2}\lambda^{\frac{N(p-1)}{2}}\right)\right|
\leq \rho \lambda^{\frac{N(p-1)}{2}},
\end{equation}
and
\begin{equation}\label{4.6}
\left|\frac{\|\nabla v\|_{2}}{\|\nabla Q\|_{2}}
-\lambda\right|
\leq \rho
\begin{cases}\lambda, & \lambda\geq 1\\
\lambda^{2}, & \lambda\leq 1.
\end{cases}
\end{equation}
Then there exists~$\theta\in\mathbb{R}$~and~$x_{0}\in\mathbb{R}^{N}$~such that
\begin{equation}\label{4.7}
\left\|v-e^{i\theta}\lambda^{\frac{N}{2}}Q\left(\lambda(\cdot-x_{0})
\right)
\right\|_{2}\leq \epsilon(\rho)
\end{equation}
and
\begin{equation}\label{4.8}
\left\|\nabla\left[v-e^{i\theta}\lambda^{\frac{N}{2}}Q\left(\lambda(\cdot-x_{0})
\right)\right]
\right\|_{2}\leq\lambda\epsilon(\rho).
\end{equation}

\end{remark}
Thus it suffices to prove the scaled statement equivalent to Proposition~\ref{p41}
and we will carry it out by means of the following
 result from Lions~\cite{lions}.

 \begin{proposition}\label{p44}
(\cite{lions}) There exists a function~$\epsilon(\rho),$~defined for small~$\rho>0$
~such that\\~$\lim_{\rho\rightarrow 0}\epsilon(\rho)=0,$~such that for all ~$u\in H^{1}$~with
\begin{equation}\label{4.9}
\left|\|u\|_{p+1}-\|Q\|_{p+1}\right|+\left|\|u\|_{2}-\|Q\|_{2}\right|+\left|\|\nabla u\|_{2}-\|\nabla Q\|_{2}\right|
\leq \rho,
\end{equation}
there exist~$\theta_{0}\in\mathbb{R}$~and~$x_{0}\in\mathbb{R}^{N}$~such that
\begin{equation}\label{4.10}
\left\|u-e^{i\theta_{0}}Q(\cdot-x_{0})\right\|_{H^{1}}
\leq \epsilon(\rho).
\end{equation}
\end{proposition}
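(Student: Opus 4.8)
The plan is to argue by contradiction using concentration-compactness. Suppose the statement fails. Then there exist $\rho_n \to 0$ and a sequence $u_n \in H^1$ satisfying
\begin{equation}\label{pp:hyp}
\bigl|\|u_n\|_{p+1}-\|Q\|_{p+1}\bigr|+\bigl|\|u_n\|_2-\|Q\|_2\bigr|+\bigl|\|\nabla u_n\|_2-\|\nabla Q\|_2\bigr|\le \rho_n,
\end{equation}
but for which
\[
\inf_{\theta\in\mathbb{R},\,x_0\in\mathbb{R}^N}\bigl\|u_n-e^{i\theta}Q(\cdot-x_0)\bigr\|_{H^1}\ge \delta_0>0
\]
for some fixed $\delta_0$. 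From \eqref{pp:hyp}, the sequence $\{u_n\}$ is bounded in $H^1$, $\|u_n\|_2\to\|Q\|_2$, $\|\nabla u_n\|_2\to\|\nabla Q\|_2$, and $\|u_n\|_{p+1}\to\|Q\|_{p+1}$. In particular $u_n$ is a minimizing sequence for the Weinstein functional
\[
J(u)=\frac{\|\nabla u\|_2^{\frac{N(p-1)}{2}}\,\|u\|_2^{2-\frac{(N-2)(p-1)}{2}}}{\|u\|_{p+1}^{p+1}},
\]
whose infimum is $C_{GN}^{-1}$ and is attained precisely at the translates/modulations/scalings of $Q$ (Weinstein \cite{W1}).

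Next I would invoke the concentration-compactness principle of Lions \cite{lions} applied to the sequence of densities $|u_n|^2$ (or, equivalently, to the constrained minimization problem for $J$). Vanishing is excluded because $\|u_n\|_{p+1}\to\|Q\|_{p+1}>0$ together with the $H^1$ bound (Lions' vanishing lemma forces $\|u_n\|_{p+1}\to0$ under vanishing). Dichotomy is excluded by the strict subadditivity of the relevant minimization problem — splitting the mass strictly increases the Gagliardo–Nirenberg quotient, contradicting that $\{u_n\}$ is a minimizing sequence; this is the standard argument and uses the scaling homogeneity of $J$. Hence, after translating $u_n \mapsto u_n(\cdot+x_n)$ for suitable $x_n\in\mathbb{R}^N$, compactness holds: $u_n \to U$ strongly in $L^{p+1}$ and weakly in $H^1$, with no loss of mass or gradient.

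Strong $L^{p+1}$ convergence plus convergence of the norms $\|u_n\|_2\to\|Q\|_2$ and $\|\nabla u_n\|_2\to\|\nabla Q\|_2$ upgrade the weak $H^1$ convergence to strong $H^1$ convergence (in a Hilbert space, weak convergence together with convergence of norms gives strong convergence). The limit $U$ then satisfies $\|U\|_2=\|Q\|_2$, $\|\nabla U\|_2=\|\nabla Q\|_2$, $\|U\|_{p+1}=\|Q\|_{p+1}$, so $J(U)=C_{GN}^{-1}$; thus $U$ is an optimizer of the Gagliardo–Nirenberg inequality, and by Weinstein's classification $U=e^{i\theta_0}Q(\cdot-y_0)$ for some $\theta_0\in\mathbb{R}$, $y_0\in\mathbb{R}^N$ (the scaling parameter is pinned to $1$ precisely because all three norms match those of $Q$). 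Therefore $\|u_n(\cdot+x_n)-e^{i\theta_0}Q(\cdot-y_0)\|_{H^1}\to 0$, contradicting the assumed lower bound $\delta_0$. This contradiction proves the existence of the modulus of continuity $\epsilon(\rho)$ with $\epsilon(\rho)\to0$ as $\rho\to0$.

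The main obstacle is the exclusion of dichotomy: one must verify the strict subadditivity inequality for the constrained minimization problem associated with $J$ (or equivalently handle the splitting of the profile decomposition), which is where the precise homogeneity of the Gagliardo–Nirenberg functional and the $L^2$-supercritical, $H^1$-subcritical range $0<s_c<1$ enter. Everything else — the vanishing alternative, the norm-convergence-implies-strong-convergence step, and Weinstein's uniqueness of optimizers — is standard once dichotomy is ruled out. Since this proposition is quoted directly from Lions, in the paper it is legitimate to simply cite \cite{lions} and omit the details, but the sketch above is the argument underlying it.
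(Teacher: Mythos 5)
Your sketch is a correct reconstruction of the standard concentration-compactness argument (exclude vanishing via the nondegenerate $L^{p+1}$ norm, exclude dichotomy via strict subadditivity of the Gagliardo--Nirenberg quotient, upgrade weak to strong $H^1$ convergence via norm convergence, then invoke Weinstein's classification of optimizers with the scaling pinned by the matching of all three norms). The paper offers no proof of this proposition and simply cites Lions, and your argument is essentially the one underlying that citation, so there is nothing to reconcile.
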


\begin{proof}(Proof of proposition~\ref{p41}).
As a result of Remark~\ref{r43}, we will just prove the equivalent version rescaling off the mass.
Set~$\tilde{u}(x)=\lambda^{-\frac{N}{2}}v(\lambda^{-1}x),$~and then
\eqref{4.6}~gives
\begin{equation}\label{4.11}
\left|\frac{\|\nabla \tilde{u}\|_{2}}{\|\nabla Q\|_{2}}-1\right|
\leq \rho.
\end{equation}
On the other hand, by~\eqref{2.2}~and the notation of ~$\omega_{1}$~and ~$\omega_{2}$~we have
\begin{align*}
\left|\frac{\|v\|_{p+1}^{p+1}}{\|Q\|_{p+1}^{p+1}}-\lambda^{\frac{N(p-1)}{2}}\right|
&\leq\left|-\frac{1}{\omega_{2}}\left(\frac{E(v)}{E(Q)}-(\omega_{1}\lambda^{2}-\omega_{2}\lambda^{\frac{N(p-1)}{2}})\right)
\right|+\left|\frac{N(p-1)}{4}\frac{\|\nabla v\|_{2}^{2}}{\|\nabla Q\|_{2}^{2}}
-\frac{\omega_{1}}{\omega_{2}}\lambda^{2}\right|\\
&=\frac{1}{\omega_{2}}\left|\frac{E(v)}{E(Q)}-(\omega_{1}\lambda^{2}-\omega_{2}\lambda^{\frac{N(p-1)}{2}})
\right|+\frac{N(p-1)}{4}\left|\frac{\|\nabla v\|_{2}^{2}}{\|\nabla Q\|_{2}^{2}}
-\lambda^{2}\right|.
\end{align*}

Then~\eqref{4.5}~and~\eqref{4.6}~imply
\begin{align*}
\left|\frac{\|v\|_{p+1}^{p+1}}{\|Q\|_{p+1}^{p+1}}-\lambda^{\frac{N(p-1)}{2}}\right|
&\leq\frac{1}{\omega_{2}}\rho\lambda^{\frac{N(p-1)}{2}}+\frac{N(p-1)}{4}\rho
\begin{cases}\lambda^{2}, & \lambda\geq 1\\
\lambda^{4}, & \lambda\leq 1
\end{cases}\\
&\leq(\frac{N(p-1)}{2}-1)\rho\lambda^{\frac{N(p-1)}{2}}.
\end{align*}
Thus in terms of~$\tilde{u},$~we obtain
\begin{equation}\label{4.12}
\left|\frac{\| \tilde{u}\|_{p+1}^{p+1}}{\| Q\|_{p+1}^{p+1}}-1\right|
\leq \frac{N(p-1)-2}{2}\rho.
\end{equation}
Thus ~\eqref{4.11}~and~\eqref{4.12}~imply that the condition~\eqref{4.9}~is satisfied by~$\tilde{u}.$~
By Proposition~\ref{p44}~and rescaling back to ~$v,$~ we obtain~\eqref{4.7}~and~\eqref{4.8}~.

\end{proof}

\section{Near-Boundary Case}

We know from Proposition~\ref{p21}~that if~$M(u)=M(Q)$~and~$E(u)/E(Q)=
\omega_{1}\lambda^{2}-\omega_{2}\lambda^{\frac{N(p-1)}{2}}$~
for some~$\lambda>1$~and~$\|\nabla u_{0}\|_{2}/\|\nabla Q\|_{2}\geq\lambda,$~
then~$\|\nabla u(t)\|_{2}/\|\nabla Q\|_{2}\geq\lambda$~for all~$t.$~
Now in this section, we will claim that~$\|\nabla u(t)\|_{2}/\|\nabla Q\|_{2}$~cannot remain near~$\lambda$~
globally in time.

\begin{proposition}\label{p51}
Let~$\lambda_{0}>1.$~There exists~$\rho_{0}=\rho_{0}(\lambda_{0})>0$~with the property that~$\rho_{0}(\lambda_{0})\rightarrow 0$~
as~$\lambda_{0}\rightarrow 1,$~such that for any~$\lambda\geq\lambda_{0},$~the following holds:
There does not exist a solution~$u(t)$~of problem~\eqref{1.1}~with~$P(u)=0$~satisfying ~$M(u)=M(Q),$~
\begin{equation}\label{5.1}
\frac{E(u)}{E(Q)}=
\omega_{1}\lambda^{2}-\omega_{2}\lambda^{\frac{N(p-1)}{2}},
\end{equation}
and for all~$t\geq0$~
\begin{equation}\label{5.2}
\lambda\leq\frac{\|\nabla u(t)\|_{2}}{\|\nabla Q\|_{2}}\leq\lambda(1+\rho_{0}).
\end{equation}
\end{proposition}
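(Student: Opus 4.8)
The plan is to argue by contradiction, exploiting the near-boundary hypothesis \eqref{5.2} together with the variational characterization of the ground state (Proposition~\ref{p41}, in the mass-normalized form of Remark~\ref{r43}) to show that $u(t)$ must be, at every time $t\ge 0$, close in $H^1$ to a modulated rescaled ground state $e^{i\theta(t)}\lambda^{N/2}Q(\lambda(\cdot-x_0(t)))$. Then I would feed this structural information into the localized virial identity \eqref{3.2}--\eqref{3.3} to force $z_R''(t)\le -c<0$ on a suitable ball, contradicting the fact that $z_R(t)\ge 0$ for all $t$.

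First I would check that the hypotheses of Proposition~\ref{p41} are met. Since $M(u)=M(Q)$ we may work with $v=u$ directly in the form of Remark~\ref{r43}. Hypothesis \eqref{5.1} says $E(u)/E(Q)=\omega_1\lambda^2-\omega_2\lambda^{N(p-1)/2}$ exactly, so \eqref{4.5} holds with $\rho$ replaced by $0$; and \eqref{5.2} gives $|\,\|\nabla u(t)\|_2/\|\nabla Q\|_2-\lambda\,|\le\lambda\rho_0$, which is \eqref{4.6} (recall $\lambda\ge\lambda_0>1$, so the relevant branch is $\rho\lambda$). Hence Proposition~\ref{p41} applies with $\rho=\rho_0$, yielding for each $t$ parameters $\theta(t),x_0(t)$ with
\begin{equation}\label{near}
\|u(t)-e^{i\theta(t)}\lambda^{N/2}Q(\lambda(\cdot-x_0(t)))\|_2\le\epsilon(\rho_0),\qquad
\|\nabla[u(t)-e^{i\theta(t)}\lambda^{N/2}Q(\lambda(\cdot-x_0(t)))]\|_2\le\lambda\epsilon(\rho_0).
\end{equation}
The role of $\rho_0\to 0$ as $\lambda_0\to 1$ is exactly to make $\epsilon(\rho_0)$ as small as needed below; choosing $\rho_0=\rho_0(\lambda_0)$ small is legitimate since $\epsilon(\rho)\to 0$.

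Next I would bound the error term $A_R(u(t))$ in the local virial identity. The point is that the rescaled ground state $\lambda^{N/2}Q(\lambda\cdot)$ concentrates at scale $\lambda^{-1}\le\lambda_0^{-1}$ and has exponentially small tails; combined with \eqref{near}, the mass and the $L^{p+1}$-norm of $u(t)$ in the region $|x-x_0(t)|\ge \tfrac12 R$ are $O(\epsilon(\rho_0))+O(e^{-c\lambda R})$, uniformly in $t$. However, $x_0(t)$ may drift, so I would first show $x_0(t)$ stays bounded: this is where $P(u)=0$ enters — a standard momentum/modulation argument (as in \cite{holmer10}) shows that a solution staying near the ground-state manifold with zero momentum cannot have its center of mass escape to infinity, so $|x_0(t)|\le C$ for all $t\ge 0$. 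Taking $R$ large (depending only on $\lambda_0$ and the fixed data), one gets $A_R(u(t))$ arbitrarily small relative to $E(Q)$. On the other hand, the main term of \eqref{3.2} is $4N(p-1)E(u)-(2N(p-1)-8)\|\nabla u\|_2^2$, which by \eqref{5.1}, \eqref{5.2} and identities \eqref{2.2} equals $E(Q)\bigl[4N(p-1)(\omega_1\lambda^2-\omega_2\lambda^{N(p-1)/2})-2\omega_1(2N(p-1)-8)\eta(t)^2\bigr]$ with $\eta(t)^2\ge\lambda^2$; since $\omega_1/\omega_2>1$ one checks this is $\le -c\,E(Q)\lambda^{N(p-1)/2}<0$ with $c$ bounded below for $\lambda\ge\lambda_0$. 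Hence $z_R''(t)\le -\tfrac12 c\,E(Q)\lambda^{N(p-1)/2}<0$ for all $t\ge 0$, which is impossible since $z_R\ge 0$.

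\textbf{Main obstacle.} The genuinely delicate step is controlling the translation parameter $x_0(t)$: without finite variance or radiality one must rule out that $u(t)$ "slides off to infinity" while hugging the ground-state manifold. This is precisely the difficulty that finite variance or radiality trivializes, and the reason the momentum normalization $P(u)=0$ from Remark~\ref{p} is essential; the argument requires a modulation analysis showing $\dot x_0(t)$ is controlled by the momentum plus small errors, so $x_0(t)$ cannot run away. Everything else — verifying the variational hypotheses, estimating tails of $Q$, and the virial sign computation — is routine given the preliminaries already assembled.
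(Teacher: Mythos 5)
Your overall strategy coincides with the paper's: argue by contradiction, invoke Proposition~\ref{p41} (in the form of Remark~\ref{r43}, with $\rho=\rho_0$ and the exact energy identity \eqref{5.1} supplying \eqref{4.5} with $\rho$ replaced by $0$), and then run the localized virial identity against the sign computation for the main term, which you carry out correctly (indeed the main term is bounded by $-16\omega_1 E(Q)(\lambda^{N(p-1)/2}-\lambda^2)$, negative and bounded away from $0$ for $\lambda\ge\lambda_0>1$).

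The genuine gap is your claim that $P(u)=0$ forces $|x_0(t)|\le C$ for all $t\ge 0$. This is stronger than what the momentum/modulation argument actually yields, and it is not what the paper uses. The zero-momentum condition controls the drift \emph{velocity} of the soliton center only up to errors quadratic in the distance to the soliton manifold; the precise statement is Lemma~\ref{l52}, which gives $|x(t)|/t\le C\epsilon^2$ as $t\to+\infty$ — i.e., the center may still escape to infinity, just at a slow linear rate $O(\epsilon(\rho_0)^2)$. Consequently you cannot fix a single radius $R$ (depending only on $\lambda_0$) for which $A_R(u(t))$ stays small uniformly in $t$, and the clean conclusion ``$z_R''\le -c$ for all $t\ge0$ with $R$ fixed, contradiction with $z_R\ge0$'' does not go through as written. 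The paper repairs exactly this: it sets $R=R(T)=\max\bigl(\max_{0\le t\le T}|x(t)|,\log\epsilon(\rho)^{-1}\bigr)$, so that $A_{R}(u(t))\le C\lambda^2\epsilon(\rho)^2$ on $[0,T]$, integrates the virial inequality to get
$$\frac{z_{R(T)}(T)}{T^2}\le C\Bigl(\frac{R(T)^2}{T^2}+\frac{\lambda R(T)}{T}\Bigr)-CE(Q)\bigl(\lambda^{N(p-1)/2}-\lambda^2\bigr),$$
and then uses Lemma~\ref{l52} to see that $R(T)/T\le C\epsilon(\rho_0)^2$ for $T$ large, so the boundary terms are dominated by the negative main term once $\rho_0$ is small; this contradicts $z_{R}\ge0$. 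So your architecture is sound, but the step you flag as ``standard'' is precisely where a quantitative drift bound (not boundedness) must be proved and then absorbed by letting the localization radius grow with the time horizon.
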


We would like to give another equivalent statement implied by this assertion:
For any solution~$u(t)$~to ~\eqref{1.1}~with~$P(u)=0$~satisfying ~$M(u)=M(Q),$~
\begin{equation*}\label{5.1}
\frac{E(u)}{E(Q)}=
\omega_{1}\lambda^{2}-\omega_{2}\lambda^{\frac{N(p-1)}{2}},
\end{equation*}
and for all~$t\geq0$~
\begin{equation*}\label{5.2}
\lambda\leq\frac{\|\nabla u(t)\|_{2}}{\|\nabla Q\|_{2}},
\end{equation*}
there exist a time~$t_{0}\geq0$~such that
\begin{equation*}\label{5.2}
\frac{\|\nabla u(t_{0})\|_{2}}{\|\nabla Q\|_{2}}\geq\lambda(1+\rho_{0}).
\end{equation*}

Before proving Proposition~\ref{p51}~we will firstly give a useful lemma the proof of which will be found in~\cite{holmer10}.

\begin{lemma}\label{l52}
Suppose that ~$u(t)$~with~$P(u)=0$~solving~\eqref{1.1}~satisfies, for all~$t$~
\begin{equation}\label{5.3}
\left\|u(t)-e^{i\theta(t)}Q(\cdot-x(t))\right\|_{H^{1}}
\leq \epsilon
\end{equation}
for some continuous functions~$\theta(t)$~and~$x(t).$~Then
$$\frac{|x(t)|}{t}\leq C\epsilon^{2}\ \ \ as\ \ t\rightarrow+\infty.$$
\end{lemma}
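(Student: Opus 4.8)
The idea is to exploit the momentum conservation $P(u)=0$ together with the almost-Galilean invariance of the quantity $\mathrm{Im}\int\bar u\,\nabla u$ localized near the soliton centre $x(t)$. First I would set up the standard "centre of mass / momentum at the soliton" comparison: since $u(t)$ stays within $\epsilon$ in $H^1$ of $e^{i\theta(t)}Q(\cdot-x(t))$ and $Q$ is real (so that $\mathrm{Im}\int\bar Q\,\nabla Q=0$), one gets $|P(u(t))|$-type control that is trivial here because $P(u)=0$; the real content is to estimate the motion of $x(t)$. To do that, introduce the truncated first moment $J(t)=\mathrm{Im}\int \chi_R(x-x(t))\,\bar u\,\nabla u\,dx$ for a suitable cutoff, or more directly work with $x(t)$ obtained from the modulation/variational step (Proposition~\ref{p41}), and differentiate a localized version of $\int x\,|u|^2$.

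The key steps, in order: (1) From $P(u)=0$ and the proximity \eqref{5.3}, show that the "local momentum" $p(t):=\mathrm{Im}\int \bar u\,\nabla u$ computed over a ball of radius $R$ around $x(t)$ is $O(\epsilon)$ — this uses that the bulk of the mass and gradient of $u$ is concentrated near $x(t)$ (with tail controlled by $\epsilon$ plus the decay of $Q$), and that the momentum of the soliton profile itself vanishes. (2) Compute $\frac{d}{dt}\int \phi\big(\tfrac{x-x(t)}{R}\big)\,x\,|u|^2\,dx$ using the equation; the leading term is $2\,p(t)=O(\epsilon)$ plus error terms coming from the cutoff, which are again $O(\epsilon)$ after using concentration, so that $|\dot x(t)|\le C\epsilon + (\text{small tail terms})$, and more precisely, iterating, one obtains a differential inequality forcing $|x(t)|\le C\epsilon^2\,t + o(t)$. (3) Conclude $|x(t)|/t\le C\epsilon^2$ as $t\to+\infty$. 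The slightly delicate point is bootstrapping from a crude bound $|\dot x|\lesssim\epsilon$ to the sharp $\epsilon^2$: this comes from re-expanding the momentum-type quantity around $e^{i\theta(t)}Q(\cdot-x(t))$ and noting that the cross term, which is the only one that could be $O(\epsilon)$, is actually orthogonal (or contributes $0$) because $\mathrm{Im}\int\bar Q\nabla Q=0$, leaving a genuinely quadratic $O(\epsilon^2)$ remainder.

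The main obstacle I expect is controlling the spatial tails uniformly in $t$: the cutoff error terms in the localized virial/moment computation involve $\|u\|_{L^2(|x-x(t)|\ge R)}$ and $\|\nabla u\|_{L^2(|x-x(t)|\ge R)}$, and although \eqref{5.3} gives these are $\le \epsilon + \|Q\|_{L^2(|x|\ge R)}$, one must choose $R=R(\epsilon)$ large enough that the $Q$-tail is also $\lesssim\epsilon$ (using exponential/polynomial decay of $Q$) while keeping the $1/R$ and $1/R^2$ prefactors from ruining the estimate — a balancing that has to be done carefully but is routine. Since the statement and its proof are quoted from \cite{holmer10} (where $N=3$, $p=3$), I would simply check that the only ingredients used there — $Q$ real, $Q$ decaying, mass and gradient concentration from the modulation lemma, and $P(u)=0$ — all survive verbatim in the mass-supercritical, energy-subcritical range $0<s_c<1$, and refer to \cite{holmer10} for the computational details.
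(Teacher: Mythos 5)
Your outline is essentially the paper's approach: the paper gives no proof of Lemma~\ref{l52} at all and simply defers to \cite{holmer10}, whose argument is exactly the localized first-moment / zero-momentum computation you describe (with $P(u)=0$ killing the leading term and the reality and decay of $Q$ leaving an $O(\epsilon^{2})$ remainder). The one technical adjustment is that the cutoff should be taken at a fixed radius $R=R(T)\geq\max_{0\leq t\leq T}|x(t)|$ centered at the origin rather than moving with $x(t)$, since $x(t)$ is only assumed continuous and cannot be differentiated; with that change your steps (1)--(3) reproduce the standard proof, and all the ingredients indeed carry over to the range $0<s_{c}<1$.
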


\begin{proof}(Proof of proposition~\ref{p51}).
To the contrary, we suppose that there exists a solution~$u(t)$~satisfying
$M(u)=M(Q),$~~$E(u)/E(Q)=
\omega_{1}\lambda^{2}-\omega_{2}\lambda^{\frac{N(p-1)}{2}}$~and
\begin{equation}\label{5.4}
\lambda\leq\frac{\|\nabla u(t)\|_{2}}{\|\nabla Q\|_{2}}\leq\lambda(1+\rho_{0}).
\end{equation}
Since
~$\|\nabla u(t)\|_{2}^{2}\geq\lambda^{2}\|\nabla Q\|_{2}^{2}=2\omega_{1}\lambda^{2}E(Q),$~
we have
\begin{align*}
&4N(p-1)E(u)-\left(2N(p-1)-8\right)\|\nabla u\|_{2}^{2}\\
\leq &-4\left(N(p-1)\omega_{2}\lambda^{\frac{N(p-1)}{2}}-4\omega_{1}\lambda^{2}
\right)E(Q).
\end{align*}
By Proposition~\ref{p41}, there exist functions~$\theta(t)$~and~$x(t)$~such that for~$\rho=\rho_{0}$~
\begin{equation}\label{5.5}
\left\|u(t)-e^{i\theta(t)}\lambda^{\frac{N}{2}}Q\left(\lambda(\cdot-x(t))
\right)
\right\|_{2}\leq \epsilon(\rho)
\end{equation}
and
\begin{equation}\label{5.6}
\left\|\nabla\left[u(t)-e^{i\theta(t)}\lambda^{\frac{N}{2}}Q\left(\lambda(\cdot-x(t))
\right)\right]
\right\|_{2}\leq\lambda\epsilon(\rho).
\end{equation}
By the continuity of the~$u(t)$~flow, we may assume~$\theta(t)$~and~$x(t)$~are continuous.
Let$$R(T)=\max\left(\max_{0\leq t\leq T}|x(t)|,\log \epsilon(\rho)^{-1}\right).$$
For fixed~$T,$~take~$R=R(T)$~in the local virial identity \eqref{3.2}~.
Then owing to the exponential localization of~$Q(x)$~, ~\eqref{5.5}~and~\eqref{5.6}~imply that,
$$\left|A_{R}(u(t))\right|\leq\frac{C}{2}\lambda^{2}
\left(\epsilon(\rho)+e^{-R(T)}\right)^{2}\leq C\lambda^{2}\epsilon(\rho)^{2}.$$
Taking $\rho=\rho_{0}$ small enough to make~$ \epsilon(\rho)$~small such that for all~~$0\leq t\leq T,$~
$$z''_{R}(t)\leq -CE(Q)(\lambda^{\frac{N(p-1)}{2}}-\lambda^{2}),$$
and so
$$\frac{z_{R}(T)}{T^{2}}\leq\frac{z_{R}(0)}{T^{2}}+\frac{z'_{R}(0)}{T}-CE(Q)(\lambda^{\frac{N(p-1)}{2}}-\lambda^{2}).$$
By definition of~$z_{R}(t)$~we have
$$|z_{R}(0)|\leq CR^{2}\|u_{0}\|_{2}^{2}=C\|Q\|_{2}^{2}R^{2}$$and
$$|z'_{R}(0)|\leq CR\|u_{0}\|_{2}\|\nabla u_{0}\|_{2}\leq
C\|Q\|_{2}\|\nabla Q\|_{2}R(1+\rho_{0})\lambda.$$
Consequently,
$$\frac{z_{2R(T)}(T)}{T^2}\leq C\left(\frac{R(T)^{2}}{T^{2}}+\frac{\lambda R(T)}{T}\right)-CE(Q)(\lambda^{\frac{N(p-1)}{2}}-\lambda^{2}).$$
Taking ~$T$~sufficiently large ,Lemma~\ref{l52}~implies
$$0\leq \frac{z_{2R(T)}(T)}{T^2}\leq C\left(\lambda\epsilon(\rho)^{2}-(\lambda^{\frac{N(p-1)}{2}}-\lambda^{2})\right)<0$$
provided taking ~$\rho_{0}$~small enough .

Note that ~$\rho_{0}$~is independent of ~$T$~. We then get a contradiction.

\end{proof}

\section{Profile Decomposition}

In this section we make some extension of the cubic profile decomposition
\cite{holmer10}~to our general case, and we review some work done by the author in~\cite{yuan}.

First of all , we introduce some notations.
We say that~$(q,r)~$is~$\dot{H}^{s}(\mathbb{R}^{N})$~admissible and denote it by
~$(q,r)\in\Lambda_{s}$~if
$$\frac{2}{q}+\frac{N}{r}=\frac{N}{2}-s,\ \ \ \frac{2N}{N-2s}<r<\frac{2N}{N-2}$$
Correspondingly, we denote~$(q',r')$~the dual~$\dot{H}^{s}(\mathbb{R}^{N})$~admissible by~$(q',r')\in\Lambda'_{s}$~if
~$(q,r)\in\Lambda_{-s}$~with~$(q',r')$~is the H\"older~ dual to~$(q,r).$~
We also define the following Srichartz norm
$$\|u\|_{S(\dot{H}^{s})}=\sup_{(q,r)\in\Lambda_{s}}\|u\|_{L_t^qL_x^r}
 $$
and the dual Strichartz norm
$$\|u\|_{S'(\dot{H}^{-s})}=\inf_{(q',r')\in\Lambda'_{s}}\|u\|_{L_t^{q'}L_x^{r'}}=\inf_{(q,r)\in\Lambda_{-s}}\|u\|_{L_t^{q'}L_x^{r'}},$$
where~$(q',r')$~is the H\"older~ dual to~$(q,r).$~
Also as in ~\cite{holmer10}~,  the notation~$S(\dot{H}^{s};I)$~and~$S'(\dot{H}^{s};I)$~indicate a restriction
to a time subinterval~$I\subset(-\infty,+\infty).$~

\begin{remark}\label{rad}
By notation ~$\|\cdot\|_{S(\dot{H}^{s_{c}})}$~ in the sequel, we will in fact add the restriction~$q\geq r$~
 to the definition of  ~$(q,r)\in\Lambda_{s_{c}}$~without affecting the future arguments for our main results in this paper,
 which is needed in the proof of  Lemma \ref{l63} below.

\end{remark}

Now we first restate the linear profile decomposition  below  which was shown in ~\cite{yuan} .
\begin{lemma}\label{lpd}
(Profile expansion). Let $\phi_{n}(x)$ be an uniformly bounded
sequence in $H^{1}$,
then for each M there exists a subsequence of $\phi_{n}$, also
denoted by $\phi_{n}$, and (1) for each $1\leq j\leq M$, there
exists a (fixed in n) profile $\tilde{\psi}^{j}(x)$ in $H^1$,
 (2) for each $1\leq j\leq M$, there exists a sequence(in n)of time
shifts $t_{n}^{j}$, (3) for each $1\leq j \leq M$, there exists a
sequence (in n) of space shifts $x_{n}^{j}$, (4) there exists a
sequence (in n) of remainders $\tilde{W}_{n}^{M}(x)$ in $H^1$, such that
$$\phi_{n}(x)=\sum_{j=1}^{M}e^{-it_{n}^{j}\Delta}\tilde{\psi}^{j}(x-x_{n}^{j})+\tilde{W}_{n}^{M}(x),$$
The time and space sequences have a pairwise divergence property,
i.e., for $1\leq j\neq k\leq M$, we have
\begin{equation*}\label{4.1}
\lim_{n\rightarrow+\infty}(
|t_{n}^{j}-t_{n}^{k}|+|x_{n}^{j}-x_{n}^{k}|)=+\infty.
\end{equation*}
The remainder sequence has the following asymptotic smallness
property:
\begin{equation*}\label{4.2}
\lim_{M\rightarrow+\infty}[\lim_{n\rightarrow+\infty}\|e^{it\Delta}\tilde{W}_{n}^{M}\|_{S(\dot{H}^{s_{c}})}]=0.
\end{equation*}
For fixed M and any $0\leq s\leq1$, we have the asymptotic
Pythagorean expansion:
\begin{equation*}\label{4.3}
\|\phi_{n}\|_{\dot{H}^{s}}^{2}=\sum_{j=1}^{M}\|\tilde{\psi}^{j}\|_{\dot{H}^{s}}^{2}+\|\tilde{W}_{n}^{M}\|_{\dot{H}^{s}}^{2}+o_{n}(1).
\end{equation*}
\end{lemma}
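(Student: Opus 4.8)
I would follow the iterative profile extraction of Keraani~\cite{keraani}, adapted to the $\dot H^{s_c}$-critical Strichartz space $S(\dot H^{s_c})$ for $0<s_c<1$ (as in~\cite{holmer10} for the cubic $3$D case); the full details are those of~\cite{yuan}, and I only sketch the main steps here.

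First I would establish the driving ingredient, an inverse Strichartz estimate: there are $\beta=\beta(N,p)>0$ and, for each $A>0$, a constant $c(A)>0$ such that if $\{f_n\}\subset H^1$ satisfies $\|f_n\|_{H^1}\le A$ and $\liminf_n\|e^{it\Delta}f_n\|_{S(\dot H^{s_c})}\ge\ep$, then, after passing to a subsequence, there exist $\{t_n\}\subset\mathbb R$, $\{x_n\}\subset\mathbb R^N$ and a nonzero $\psi\in H^1$ with
\begin{equation*}
e^{it_n\Delta}f_n(\cdot+x_n)\rightharpoonup\psi\ \text{ weakly in } H^1,\qquad \|\psi\|_{\dot H^{s_c}}\ge c(A)\,\ep^{1+\beta}.
\end{equation*}
The proof of this uses a Littlewood--Paley decomposition of $e^{it\Delta}f_n$ together with interpolation of the Strichartz inequality against a weaker (negative-regularity Besov) norm; it is here that the admissibility conventions of Section~6 matter, in particular the restriction $q\ge r$ noted in Remark~\ref{rad} which, together with $0<s_c<1$, keeps all the exponents occurring in the interpolation admissible.

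Next I would iterate. Set $\tilde W_n^0=\phi_n$; given $\tilde W_n^{M-1}$, if $\limsup_n\|e^{it\Delta}\tilde W_n^{M-1}\|_{S(\dot H^{s_c})}=0$ take all remaining profiles to be $0$, and otherwise apply the inverse estimate to $\{\tilde W_n^{M-1}\}$ to get $\tilde\psi^M,t_n^M,x_n^M$ and set
\begin{equation*}
\tilde W_n^M(x)=\tilde W_n^{M-1}(x)-e^{-it_n^M\Delta}\tilde\psi^M(x-x_n^M),
\end{equation*}
so $e^{it_n^M\Delta}\tilde W_n^M(\cdot+x_n^M)\rightharpoonup0$ in $H^1$. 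Telescoping gives the stated expansion, and a diagonal subsequence handles all $M$ simultaneously. I would get the pairwise divergence $\lim_n(|t_n^j-t_n^k|+|x_n^j-x_n^k|)=+\infty$ for $j\ne k$ by contradiction: if $t_n^k-t_n^j$ and $x_n^k-x_n^j$ stayed bounded along a subsequence for some $k>j$, then after the change of variables fixing $(t_n^j,x_n^j)$ the weak limit defining $\tilde\psi^k$ would become a weak limit of translates of $e^{it_n^j\Delta}\tilde W_n^{j}(\cdot+x_n^j)\rightharpoonup0$, forcing $\tilde\psi^k=0$, a contradiction.

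Finally I would prove the Pythagorean expansion and the smallness of the remainder. Fixing $0\le s\le1$: since $e^{it_n^M\Delta}$ is unitary on $\dot H^s$, $e^{it_n^M\Delta}\tilde W_n^M(\cdot+x_n^M)\rightharpoonup0$, and the cross terms among distinct subtracted profiles vanish in the limit by the pairwise divergence, one obtains
\begin{equation*}
\|\phi_n\|_{\dot H^s}^2=\sum_{j=1}^M\|\tilde\psi^j\|_{\dot H^s}^2+\|\tilde W_n^M\|_{\dot H^s}^2+o_n(1).
\end{equation*}
Taking $s=s_c$ gives $\sum_j\|\tilde\psi^j\|_{\dot H^{s_c}}^2\le\limsup_n\|\phi_n\|_{\dot H^{s_c}}^2<\infty$, hence $\|\tilde\psi^M\|_{\dot H^{s_c}}\to0$; combined with the lower bound $\|\tilde\psi^M\|_{\dot H^{s_c}}\ge c(A)(\limsup_n\|e^{it\Delta}\tilde W_n^{M-1}\|_{S(\dot H^{s_c})})^{1+\beta}$ from the first step this forces $\lim_{M\to\infty}\limsup_n\|e^{it\Delta}\tilde W_n^M\|_{S(\dot H^{s_c})}=0$, which is the asymptotic smallness of the remainder; the $s=0$ and $s=1$ cases give the uniform $H^1$ bounds on the $\tilde\psi^j$ and $\tilde W_n^M$. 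The hard part will be the inverse Strichartz estimate above: producing a refinement of the $S(\dot H^{s_c})$ Strichartz inequality with a quantitative gain captured by a single weak limit, uniformly over $1+\frac4N<p<1+\frac4{N-2}$ (and $p<\infty$ when $N=1,2$); the low/high-frequency interpolation and the bookkeeping of admissible pairs are the real work, everything else being soft.
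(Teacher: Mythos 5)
Your sketch follows exactly the route the paper relies on: the paper omits the proof of Lemma~\ref{lpd} and defers to the Keraani-style argument in \cite{yuan} (with the admissibility bookkeeping of Remark~\ref{rpd}), which is precisely the iterative extraction via a refined/inverse Strichartz estimate in $S(\dot H^{s_c})$, pairwise divergence by the weak-limit contradiction, and the Pythagorean expansion from orthogonality of the parameters. You have correctly isolated the inverse Strichartz estimate as the only substantive ingredient and correctly flagged the role of the $q\ge r$ restriction, so the proposal is sound and takes essentially the same approach.
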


\begin{remark}\label{rpd}
We omit the proof of Lemma~\ref{lpd},~but would like to point out some modification from the statement in~\cite{yuan}:
In the reference the author introduced a concept of k-point~$(\frac{1}{r},\frac{1}{q})$~for which one can also refer to~\cite{kato},
and gave the proof in terms of that conception.
In fact, it is easy to check that ~$(\frac{1}{r},\frac{1}{q})$~is a p-point with the same~$p$~in our equation~\eqref{1.1}~
, if and only if ~$(q,r)\in\Lambda_{s_{c}}.$~Moreover,  it is interesting to note that
if~$(q,r)\in\Lambda_{s_{c}}$~ then
~$(\frac{q}{p},\frac{r}{p})\in\Lambda'_{s_{c}}.$~Thus we have the following Strichartz estimate which was frequently used in \cite{yuan}:
\begin{align*}
\left\|i\int^{t}_{0}e^{i(t-t')\Delta}|u|^{p-1}|u|(x,t')dt'\right\|_{L_{t}^{q}L_{x}^{r}}
\leq C\left\||u|^{p-1}|u|\right\|_{L_{t}^{\frac{q}{p}}L_{x}^{\frac{r}{p}}}
\leq C\left\|u\right\|^{p}_{L_{t}^{q}L_{x}^{r}}.
\end{align*}
 Furthermore, the author in~\cite{yuan} gave another useful claim and we will restate the equivalent version as follows:
For any ~$(q,r)\in\Lambda_{s_{c}}$,~~there exists~$(q_{1},r_{1})\in\Lambda_{0}$~and
~$(q'_{2},r'_{2})\in\Lambda'_{0}$~such that$$
\begin{cases}
\frac{1}{q'_{2}}=\frac{1}{q_{1}}+\frac{p-1}{q}\\
\frac{1}{r'_{2}}=\frac{1}{r_{1}}+\frac{p-1}{r}.
\end{cases}$$
Applying the above observation, our proof of Lemma~\ref{lpd}~will be almost the same as that in~\cite{yuan},
and that is why we will omit it here.
\end{remark}

Similar to Keraani~\cite{keraani}~and~\cite{km},~we give the following definition of the nonlinear profile:
\begin{definition}
Let~$V$~be a solution to the linear Schr\"{o}dinger equation. We say~$U$~is the nonlinear profile associated to
~$(V,\{t_{n}\}),$~if ~$U$~is a solution to the Hartree equation~\eqref{1.1}~satisfying
$$\|(U-V)(-t_{n})\|_{H^1}\rightarrow0\ \ \  as\ \  n\rightarrow\infty.$$
\end{definition}

Note that, similar to the arguments  in \cite{km},  by the local theory and
the proof of the existence of  wave operators,
there always exist a nonlinear profile associated to a given~$(V,\{t_{n}\}).$
Thus
 for every~$j$,~there exists a solution $v^j$ to \eqref{1.1} associated to~$(\tilde{\psi}^{j},\{t_{n}^{j}\})$~
such that$$\|v^{j}(\cdot-x_{n}^{j},-t_{n}^{j})-e^{-it_{n}^{j}\Delta}\tilde{\psi}^{j}(\cdot-x_{n}^{j})\|_{H^{1}}\rightarrow0
\ \ \  as\ \  n\rightarrow\infty.$$

If we let $NLH(t)\psi$ denote the solution to \eqref{1.1} with initial data $\psi$,
 by shifting the linear profile $\tilde{\psi}^{j}$ when necessary, we may denote  $v^{j}(-t_{n}^{j})$
as  $NLH(-t_{n}^{j})\psi^j$ with some $\psi^{j}\in H^1$. Thus using
the same method of replacing linear flows by nonlinear flows
 as applied  in~\cite{radial}
to give the following proposition:

\begin{proposition}\label{p61}
Let ~$\phi_{n}(x)$ ~be an uniformly bounded
sequence in ~$H^{1}$~.
There exists a subsequence of ~$\phi_{n}$~, also
denoted by ~$\phi_{n}$~, profiles~
$\psi^{j}(x)$~ in ~$H^1$~, and parameters~$x_{n}^{j}$~,~$t_{n}^{j}$~
so that for each~$M$~,
\begin{equation}\label{pc}
\phi_{n}(x)=\sum_{j=1}^{M}NLS(-t_{n}^{j})\psi^{j}(x-x_{n}^{j})+W_{n}^{M}(x),
\end{equation}
 where as~$n\rightarrow \infty$~\\
$\bullet $~For each~$j$~, either~$t_{n}^{j}=0,$~~$t_{n}^{j}\rightarrow+\infty$~or~$t_{n}^{j}\rightarrow-\infty.$~\\
$\bullet $~If ~$t_{n}^{j}\rightarrow+\infty,$~ then~$\|NLS(-t)\psi^{j}\|_{S(\dot{H}^{s_{c}};[0,\infty))}<\infty$~
and if~$t_{n}^{j}\rightarrow-\infty,$~then~\\$\|NLS(-t)\psi^{j}\|_{S(\dot{H}^{s_{c}};[-\infty,0))}<\infty$~\\
 $\bullet $~For~$j\neq k$~,
 \begin{equation*}\label{4.1}
\lim_{n\rightarrow+\infty}(
|t_{n}^{j}-t_{n}^{k}|+|x_{n}^{j}-x_{n}^{k}|)=+\infty.
\end{equation*}
$\bullet $~~$NLS(t)W_{n}^{M}$~is global for~$M$~large enough with
 \begin{equation*}\label{4.2}
\lim_{M\rightarrow+\infty}[\lim_{n\rightarrow+\infty}\|NLS(t)W_{n}^{M}\|_{S(\dot{H}^{s_{c}})}]=0.
\end{equation*}
 We also have the ~$H^{s}$~Pythagorean decomposition: for fixed~$M$~and~$0\leq s\leq 1$~,
\begin{equation}\label{6.1}
\|\phi_{n}\|_{\dot{H}^{s}}^{2}=\sum_{j=1}^{M}\|NLS(-t_{n}^{j})\psi^{j}\|_{\dot{H}^{s}}^{2}+\|W_{n}^{M}\|_{\dot{H}^{s}}^{2}+o_{n}(1),
\end{equation}
and the energy ~Pythagorean decomposition
\begin{equation}\label{6.2}
E(\phi_{n})=\sum_{j=1}^{M}E(\psi^{j})+E(W_{n}^{M})+o_{n}(1).
\end{equation}

\end{proposition}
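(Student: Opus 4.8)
The plan is to obtain Proposition~\ref{p61} by upgrading the linear profile decomposition of Lemma~\ref{lpd} to a nonlinear one, following the now-standard replacement of linear flows by nonlinear flows (Keraani, Holmer--Roudenko). First I would apply Lemma~\ref{lpd} to the bounded sequence $\phi_n$, producing linear profiles $\tilde\psi^j$, parameters $t_n^j$, $x_n^j$, and remainders $\tilde W_n^M$. After passing to a subsequence, for each $j$ the time shifts $t_n^j$ either stay bounded or diverge to $\pm\infty$; in the bounded case one absorbs $e^{-it_n^j\Delta}$ by replacing $\tilde\psi^j$ with its limit and setting $t_n^j=0$. For each $j$ one then invokes the existence of the nonlinear profile $v^j$ associated to $(\tilde\psi^j,\{t_n^j\})$ (guaranteed, as noted in the text, by the local theory together with the existence-of-wave-operators construction, Lemma~\ref{wave}), defines $\psi^j$ by $NLS(-t_n^j)\psi^j=v^j(-t_n^j)$ modulo the vanishing $H^1$-error, and rewrites each linear summand $e^{-it_n^j\Delta}\tilde\psi^j(\cdot-x_n^j)$ as $NLS(-t_n^j)\psi^j(\cdot-x_n^j)+o_{H^1}(1)$. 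Collecting these errors into the remainder gives the decomposition \eqref{pc} with a new remainder $W_n^M$ differing from $\tilde W_n^M$ by an $H^1$-null sequence (for fixed $M$).

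Next I would verify the four bulleted properties. The trichotomy $t_n^j\in\{0\}\cup\{\to\pm\infty\}$ is immediate from the subsequence choice. For the finite-Strichartz-norm claim when $t_n^j\to+\infty$: since $\|e^{-it_n^j\Delta}\tilde\psi^j\|_{S(\dot H^{s_c};[0,\infty))}\to0$ as $t_n^j\to+\infty$ (by Strichartz for the free evolution, using that $\tilde\psi^j\in H^1\subset\dot H^{s_c}$ and the tail of the Strichartz integral vanishes), the small-data theory Lemma~\ref{sd} applies to $NLS(-t)\psi^j$ on $[0,\infty)$ after $n$ is large, hence $\|NLS(-t)\psi^j\|_{S(\dot H^{s_c};[0,\infty))}<\infty$; symmetrically for $t_n^j\to-\infty$. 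The pairwise divergence of $(t_n^j,x_n^j)$ is inherited verbatim from Lemma~\ref{lpd}. For the smallness of $NLS(t)W_n^M$: one uses the long-time perturbation lemma (the nonlinear perturbation theory based on Strichartz estimates referenced in the introduction) together with $\|e^{it\Delta}W_n^M\|_{S(\dot H^{s_c})}=\|e^{it\Delta}\tilde W_n^M\|_{S(\dot H^{s_c})}+o_n(1)\to0$ as $M\to\infty$ to conclude $NLS(t)W_n^M$ is global with $\lim_M\lim_n\|NLS(t)W_n^M\|_{S(\dot H^{s_c})}=0$.

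For the two Pythagorean expansions: \eqref{6.1} in $\dot H^s$ follows from the corresponding $\dot H^s$-orthogonality in Lemma~\ref{lpd} once one notes $\|NLS(-t_n^j)\psi^j\|_{\dot H^s}=\|e^{-it_n^j\Delta}\tilde\psi^j\|_{\dot H^s}+o_n(1)$ (when $t_n^j=0$ the nonlinear and linear data agree up to $o_{H^1}(1)$; when $t_n^j\to\pm\infty$ the nonlinear profile converges to the linear one in $H^1$ by the scattering/wave-operator estimates) and that replacing $\tilde W_n^M$ by $W_n^M$ is an $o_n(1)$ change. The energy expansion \eqref{6.2} is the delicate point: $E$ is not a sum of an $L^2$-type quadratic form, so one must control the cross terms arising from the potential energy $\int|u|^{p+1}$. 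The main obstacle is precisely this — establishing that $\int|\sum_j g_n^j + W_n^M|^{p+1} = \sum_j\int|g_n^j|^{p+1}+\int|W_n^M|^{p+1}+o_n(1)$ where $g_n^j=NLS(-t_n^j)\psi^j(\cdot-x_n^j)$. This is handled by an interpolation/orthogonality argument: for $j\neq k$ the supports (in space-time) of $g_n^j$ and $g_n^k$ asymptotically separate because of the pairwise divergence of $(t_n^j,x_n^j)$ — when $t_n^j\to\pm\infty$ one uses $L^{p+1}$-decay of the (almost free) evolution, and when $x_n^j-x_n^k\to\infty$ one uses spatial translation; a further tail estimate using the $S(\dot H^{s_c})$-smallness of $W_n^M$ handles the interaction with the remainder, with the energy value of $W_n^M$ as $n\to\infty$ contributing only $\tfrac12\|\nabla W_n^M\|_2^2+o_n(1)$ since $\|W_n^M\|_{p+1}\to0$. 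Careful bookkeeping of these cross terms, exactly as in \cite{holmer10} but now with the general exponent $p$ in place of $p=3$, completes the proof.
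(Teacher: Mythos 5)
Your proposal follows essentially the same route as the paper, which itself only sketches this proof: apply the linear profile decomposition (Lemma~\ref{lpd}), replace each linear profile by the associated nonlinear profile via the wave-operator/local-theory construction, absorb the $H^1$-null discrepancies into the remainder, and reduce the energy Pythagorean expansion \eqref{6.2} to the $L^{p+1}$ orthogonality \eqref{6.3}, exactly as the paper does by reference to Holmer--Roudenko. The only quibble is your justification of the finite-Strichartz-norm bullet: $\|e^{i(t-t_n^j)\Delta}\tilde\psi^j\|_{S(\dot H^{s_c};[0,\infty))}$ does \emph{not} tend to $0$ when $t_n^j\to+\infty$ (it tends to the full-line norm); the correct statement is that the linear flow has small Strichartz norm on the far tail $(-\infty,-T]$, which lets the wave-operator construction give finiteness of $\|NLS(-t)\psi^j\|_{S(\dot H^{s_c};[0,\infty))}$ --- a minor repair that does not affect the architecture of the argument.
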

From a similar argument in~\cite{radial},~ we know that~ \eqref{6.2}~was proven by establishing the following first
\begin{equation}\label{6.3}
\|\phi_{n}\|_{p+1}^{p+1}=\sum_{j=1}^{M}\|NLS(-t_{n}^{j})\psi^{j}\|_{p+1}^{p+1}+\|W_{n}^{M}\|_{p+1}^{p+1}+o_{n}(1).
\end{equation}

The next lemma is an extension of the perturbation theory for the case~$N=3$~\cite{radial}.
By virtue of Remark~\ref{rpd},~the proof will also be similar to~\cite{yuan},~which we will represent in this paper.

\begin{lemma}\label{l62}
(Perturbation Theory). For each~ $A\geq 1$~, there exists~
$\epsilon_{0}=\epsilon_{0}(A)\ll1$~and~ $c=c(A)$ such that the following holds:
Fix ~$T>0$~. Let~$u=u(x,t)\in L^{\infty}([0,T];H^{1})$ ~solve
$$iu_{t}+\Delta
u+|u|^{p-1}u=0$$on~$[0,T]$~. Let
~ $\tilde{u}=\tilde{u}(x,t)\in L^{\infty}([0,T];H^{1})$~ and define
$$e =i\tilde{u}_{t}+\Delta
\tilde{u}+|\tilde{u}|^{p-1}\tilde{u}.$$
For each~$\epsilon\leq\epsilon_{0}$~, if for some~$(q_{1},r_{1})\in\Lambda_{-s_{c}}$~
$$\|\tilde{u}\|_{S(\dot{H}^{s_{c}};[0,T])}\leq A,\ \
\|e\|_{S'(\dot{H}^{-s_{c}};[0,T])}\leq \epsilon,\ \ and\ \
\|e^{it\Delta}(u(0)-\tilde{u}(0))\|_{S(\dot{H}^{s_{c}};[0,T])}\leq
\epsilon,$$
then
$$\|u-\tilde{u}\|_{S(\dot{H}^{s_{c}};[0,T])}\leq c(A) \epsilon. $$
\end{lemma}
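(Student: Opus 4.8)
The plan is to run the standard long-time perturbation (``stability'') argument of Kenig--Merle type, as used for NLS in \cite{radial} and \cite{yuan}, but carried out entirely in the scale-critical norm $S(\dot H^{s_c})$ using the Hölder pairing of exponents recorded in Remark~\ref{rpd}. Set $w=u-\tilde u$. Subtracting the equations for $u$ and $\tilde u$, one finds $iw_t+\Delta w+\big(|u|^{p-1}u-|\tilde u|^{p-1}\tilde u\big)=-e$, so Duhamel's formula gives
$$w(t)=e^{it\Delta}w(0)+i\int_0^t e^{i(t-s)\Delta}\big(|u|^{p-1}u-|\tilde u|^{p-1}\tilde u+e\big)(s)\,ds .$$
Since $p$ is in general non-integer, I would not expand the difference algebraically but instead use the Hölder-continuity bound $\big||u|^{p-1}u-|\tilde u|^{p-1}\tilde u\big|\le C\big(|u|^{p-1}+|\tilde u|^{p-1}\big)|w|$. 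Combining this with the Strichartz estimate of Remark~\ref{rpd} (which places a product of the form $|v|^{p-1}|w|$ into $S'(\dot H^{-s_c})$ at cost $\|v\|_{S(\dot H^{s_c})}^{p-1}\|w\|_{S(\dot H^{s_c})}$) and the triangle inequality $\|u\|_{S(\dot H^{s_c};I)}\le\|\tilde u\|_{S(\dot H^{s_c};I)}+\|w\|_{S(\dot H^{s_c};I)}$, one gets, on any subinterval $I$ with left endpoint $t_I$, the a priori bound
$$\|w\|_{S(\dot H^{s_c};I)}\le \|e^{i(t-t_I)\Delta}w(t_I)\|_{S(\dot H^{s_c};I)}+C\epsilon+C\big(\|\tilde u\|_{S(\dot H^{s_c};I)}^{p-1}+\|w\|_{S(\dot H^{s_c};I)}^{p-1}\big)\|w\|_{S(\dot H^{s_c};I)}.$$

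I would then partition $[0,T]$ into $J=J(A)$ consecutive intervals $I_1,\dots,I_J$ on each of which $\|\tilde u\|_{S(\dot H^{s_c};I_j)}\le\delta$, where $\delta=\delta(A)$ is a small constant to be fixed and $J$ depends only on $A$ and $\delta$ (possible since $\|\tilde u\|_{S(\dot H^{s_c};[0,T])}\le A<\infty$). On $I_1$ the hypothesis gives $\|e^{it\Delta}w(0)\|_{S(\dot H^{s_c};I_1)}\le\epsilon$, so, choosing $\delta$ with $C\delta^{p-1}\le\frac12$ and $\epsilon_0$ small, a continuity/bootstrap argument in the displayed bound yields $\|w\|_{S(\dot H^{s_c};I_1)}\le 2C\epsilon$. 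To move to $I_2$ one uses Duhamel and Strichartz once more to estimate $\|e^{i(t-t_2)\Delta}w(t_2)\|_{S(\dot H^{s_c};I_2)}$ by $\|e^{it\Delta}w(0)\|_{S(\dot H^{s_c};[0,T])}+C\big(\|\tilde u\|_{S(\dot H^{s_c};I_1)}^{p-1}+\|w\|_{S(\dot H^{s_c};I_1)}^{p-1}\big)\|w\|_{S(\dot H^{s_c};I_1)}+C\epsilon$, hence by a fixed multiple of $\epsilon$. Iterating through the $J$ intervals, the control of $\|w\|$ and of $\|e^{i(t-t_j)\Delta}w(t_j)\|$ is multiplied by a fixed factor at each step, so after $J$ steps one reaches $\|w\|_{S(\dot H^{s_c};[0,T])}\le C^{J(A)}\epsilon$. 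Setting $c(A)=C^{J(A)}$ and shrinking $\epsilon_0=\epsilon_0(A)$ so that every bootstrap hypothesis encountered along the $J$ steps (of the form $C\,c(A)^{p-1}\epsilon_0^{p-1}\le\frac12$) is satisfied, the conclusion follows.

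The essentially routine parts are the Hölder bookkeeping of admissible exponents — which is exactly what Remark~\ref{rpd} supplies — and the telescoping of the remainders $w(t_j)$. I expect the main obstacle to be two-fold. First, since $p$ need not be an integer, the difference of nonlinearities must be handled through the Hölder-continuity inequality above, and one must verify that the resulting product $(|u|^{p-1}+|\tilde u|^{p-1})|w|$ genuinely lands in a dual $\dot H^{-s_c}$-Strichartz class for the whole range $0<s_c<1$; this is where the admissibility window $\frac{2N}{N-2s_c}<r<\frac{2N}{N-2}$ and the pairing $(q/p,r/p)\in\Lambda'_{s_c}$ of Remark~\ref{rpd} are indispensable. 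Second, one must be careful that the number of subintervals $J$ depends only on $A$, and not on $T$ or on $u$ itself, so that the accumulated constant $c(A)$ stays finite; this is guaranteed precisely by the hypothesis $\|\tilde u\|_{S(\dot H^{s_c};[0,T])}\le A$. With these two points settled the argument closes by the standard continuity-in-$T$ bootstrap applied on each of the finitely many pieces.
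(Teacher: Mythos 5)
Your proposal is correct and follows essentially the same route as the paper's proof: the same Duhamel formulation for $w=u-\tilde u$, the same pointwise bound on the difference of nonlinearities combined with the exponent pairing of Remark~\ref{rpd}, the same partition of $[0,T]$ into $N(A)$ intervals where $\|\tilde u\|_{S(\dot H^{s_c};I_j)}\le\delta$, and the same telescoping of $e^{i(t-t_j)\Delta}w(t_j)$ with a geometrically growing constant absorbed into $c(A)$ and a correspondingly shrunken $\epsilon_0(A)$. No substantive differences.
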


\begin{proof}
Under the condition of the lemma, it suffices to prove that
for any~$(q,r)\in\Lambda_{s_{c}}$~and for some~$(q_{1},r_{1})\in\Lambda_{-s_{c}}$,~if
$$\|\tilde{u}\|_{L_{t}^{q}L_{x}^{r}}\leq
A,\ \
\|e\|_{L_{t}^{q'_1}L_{x}^{r'_1}}\leq \epsilon,\ \ and\ \
\|e^{it\Delta}(u(0)-\tilde{u}(0))\|_{L_{t}^{q}L_{x}^{r}}\leq
\epsilon,$$
then
$$\|u-\tilde{u}\|_{L_{t}^{q}L_{x}^{r}}\leq c(A) \epsilon. $$In fact,
the following arguments are similar to that  in \cite{yuan} except for some slight differences. One can also refer to \cite{radial} for
a similar proof.

Let $w$ defined by  $u=\tilde{u}+w$, then $w$ solves
\begin{equation}\label{a1}
iw_{t}+\Delta
w+|\tilde{u}+w|^{p-1}(\tilde{u}+w)-|\tilde{u}|^{p-1}\tilde{u}+e=0.
\end{equation}
Since $\|\tilde{u}\|_{L_{t}^{q}L_{x}^{r}}\leq A$, we can partition
$[0,T]$ into $N=N(A)$ intervals $I_{j}=[t_{j},t_{j+1}]$
such that for every $j$, $\|\tilde{u}\|_{L_{t\in
I_j}^{q}L_{x}^{r}}\leq\delta$ with $\delta$ sufficiently small to be
specified later.  The integral equation of \eqref{a1}
with initial data $w(t_{j})$ is
\begin{equation}\label{a2}
w(t)=e^{i(t-t_{j})\Delta}w(t_{j})+i\int_{t_{j}}^{t}e^{i(t-s)\Delta}W(\cdot,s)ds,
\end{equation}
where
$$W=(-|\tilde{u}+w|^{p-1}(\tilde{u}+w)+|\tilde{u}|^{p-1}\tilde{u})-e.$$
Applying
 the inhomogeneous Strichartz estimate in $I_{j}$ and from Remark \ref{rpd}, we have
\begin{eqnarray}
\|w\|_{L_{t\in I_j}^{q}L_{x}^{r}}&\leq&
\|e^{i(t-t_{j})\Delta}w(t_{j})\|_{L_{t\in I_j}^{q}L_{x}^{r}}\nonumber\\
&&+C||(-|\tilde{u}+w|^{p-1}(\tilde{u}+w)+|\tilde{u}|^{p-1}\tilde{u}||_{L_{t\in I_{j}}^{\frac{q}{p}}L_{x}^{\frac{r}{p}}} +\|e\|_{L_{t}^{q_1}L_{x}^{r_1}}\nonumber\\
&\leq&
\|e^{i(t-t_{j})\Delta}w(t_{j})\|_{L_{t\in I_j}^{q}L_{x}^{r}}\nonumber\\
&&+C||(|\tilde{u}|^{p-1}+|w|^{p-1})w||_{L_{t\in I_{j}}^{\frac{q}{p}}L_{x}^{\frac{r}{p}}} +\|e\|_{L_{t}^{q_1}L_{x}^{r_1}}\nonumber\\
&\leq&\|e^{i(t-t_{j})\Delta}w(t_{j})\|_{L_{t\in
I_j}^{q}L_{x}^{r}}\nonumber\\
&&+ C||\tilde{u}||^{p-1}_{L_{t\in
I_j}^{q}L_{x}^{r}}||w||_{L_{t\in
I_j}^{q}L_{x}^{r}}+C||w||_{L_{t\in
I_j}^{q}L_{x}^{r}}^{p}+\|e\|_{L_{t}^{q_1}L_{x}^{r_1}},\nonumber\\
&\leq&
\|e^{i(t-t_{j})\Delta}w(t_{j})\|_{L_{t\in I_j}^{q}L_{x}^{r}}\nonumber\\
&&+C\delta^{p-1}\|w\|_{L_{t\in I_j}^{q}L_{x}^{r}}+C\|w\|_{L_{t\in
I_j}^{q}L_{x}^{r}}^{p}+C\epsilon.\nonumber
\end{eqnarray}
If
\begin{equation}\label{a3}
\delta\leq (\frac{1}{4C})^{\frac{1}{p-1}},  \
(\|e^{i(t-t_{j})\Delta}w(t_{j})\|_{L_{t\in
I_j}^{q}L_{x}^{r}}+C\epsilon_{0})\leq
\frac{1}{2}(\frac{1}{4C})^{\frac{1}{p-1}},
\end{equation}
then
$$\|w\|_{L_{t\in I_j}^{q}L_{x}^{r}}\leq
2\|e^{i(t-t_{j})\Delta}w(t_{j})\|_{L_{t\in
I_j}^{q}L_{x}^{r}}+2C\epsilon.$$
 Now we take $t=t_{j+1}$ in
\eqref{a2}, and apply $e^{i(t-t_{j})\Delta}$ to the both sides
, we obtain
$$e^{i(t-t_{j+1})\Delta}w(t_{j+1})=e^{i(t-t_{j})\Delta}w(t_{j})+ i\int_{t_{j}}^{t_{j+1}}e^{i(t-s)\Delta}W(\cdot,s)ds. $$
Again, with the same method as above, we obtain
$$\|e^{i(t-t_{j+1})\Delta}w(t_{j+1})\|_{L_{t\in I_j}^{q}L_{x}^{r}}\leq
2\|e^{i(t-t_{j})\Delta}w(t_{j})\|_{L_{t\in
I_j}^{q}L_{x}^{r}}+2C\epsilon.$$
Iterating the above procedure
from $j=0$, we have
$$\|e^{i(t-t_{j})\Delta}w(t_{j})\|_{L_t^{q}L_{x}^{r}}\leq
2^{j}\|e^{i(t-t_{0})\Delta}w(t_{0})\|_{L_t^{q}L_{x}^{r}}+(2^{j}-1)2C\epsilon\leq
2^{j+2}C\epsilon.$$
To accommodate the second part of \eqref{a3} for all
intervals $I_{j},\ 0\leq j\leq N-1$, we require that
\begin{equation}\label{a4}
2^{N+2}C\epsilon_{0}\leq (\frac{1}{4C})^{\frac{1}{p-1}},
\end{equation}
and we obtain the result easily.

Now we recall
the parameter dependence of parameters:
We choose $\delta$ to meet the first
part of \eqref{a3}. Given A, the number of the interval $N$ is
determined, and the inequality \eqref{a4} tells how small
 $\epsilon_{0}$ should be
taken in terms of $N(A)$.

\end{proof}

Note from the proof above that, the parameters in Lemma \ref{l62} is not dependent on~$T$~.  As is stated in~\cite{holmer10}~for~$N=3$~, besides the~$H^{1}$~asymptotic orthogonality~\eqref{6.1}~ at~$t=0$~, this property
can be extended  to the NLS flow for~$0\leq t\leq T$~as an application of Lemma \ref{l62}  with an constant ~$A=A(T)$~ dependent
on~$T$ (but only through  $A$). As for the general Mass-supercritical
and Energy-subcritical case, we can prove  the following similar result:

\begin{lemma}\label{l63}
(~$H^{1}$~Pythagorean Decomposition Along the NLS Flow). Suppose ~$\phi_{n}(x)$ ~be a uniformly bounded
sequence in ~$H^{1}$~. Fix any time~$0<T<\infty$~. Suppose that~$u_{n}(t)\equiv NLS(t)\phi_{n}$~exists up
to time~$T$~for all~$n$~and
$$\lim_{n\rightarrow\infty}\|\nabla u_{n}(t)\|_{ L^{\infty}([0,T];L^{2})}<\infty.$$
Let ~$W_{n}^{M}(t)\equiv NLS(t)W_{n}^{M}.$~Then, for all~$j$~,
 ~$v^{j}(t)\equiv NLS(t)\psi^{j}$~exist up to time~$T$~and for all~$t\in[0,T],$~
\begin{equation}\label{6.4}
\|\nabla u_{n}\|_{2}^{2}=\sum_{j=1}^{M}\|\nabla v^{j}(t-t_{n}^{j})\|_{2}^{2}+\|\nabla W_{n}^{M}(t)\|_{2}^{2}+o_{n}(1).
\end{equation}
Here, ~$o_{n}(1)\rightarrow 0$~uniformly on~$0\leq t\leq T$~.

\end{lemma}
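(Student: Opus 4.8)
The plan is to adapt the argument of Holmer--Roudenko \cite{holmer10} for the $3$D cubic case, using the nonlinear profile decomposition of Proposition~\ref{p61} together with the perturbation theory of Lemma~\ref{l62}. First I would set up the bookkeeping: by Proposition~\ref{p61} applied to $\phi_n$, we have the nonlinear profile expansion \eqref{pc}, with the bounded-$H^1$ hypothesis giving $\sup_n\|\phi_n\|_{H^1}<\infty$ and the Pythagorean identities \eqref{6.1}--\eqref{6.2}. The $H^1$-orthogonality \eqref{6.1} at $t=0$ is exactly \eqref{6.4} at $t=0$ (modulo replacing $NLS(-t_n^j)\psi^j$ by $v^j(-t_n^j)$, which agree in $H^1$-norm); the content of the lemma is to propagate this orthogonality along the NLS flow up to time $T$, with an error $o_n(1)$ uniform in $t\in[0,T]$.

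The main step is to run the perturbation lemma with the \emph{approximate solution} $\tilde u_n(t)=\sum_{j=1}^{M}v^j(t-t_n^j)+W_n^M(t)$, where $v^j(t)=NLS(t)\psi^j$ and $W_n^M(t)=NLS(t)W_n^M$. First one checks that for $M$ fixed each $v^j$ exists up to time $T$: this follows from the a priori bound $\lim_n\|\nabla u_n\|_{L^\infty([0,T];L^2)}<\infty$ together with \eqref{6.1}, which forces $\sup_n\|\nabla v^j(t-t_n^j)\|_{L^\infty_{[0,T]}L^2_x}<\infty$ for each $j$; combined with the scattering bounds on $NLS(-t)\psi^j$ in the $t_n^j\to\pm\infty$ case (the second bullet of Proposition~\ref{p61}), a standard continuity/blow-up-criterion argument gives existence of $v^j$ on $[0,T]$ with a uniform $S(\dot H^{s_c})$ bound, and similarly $W_n^M(t)$ is global with small $S(\dot H^{s_c})$-norm for $M$ large. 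Then one verifies the three hypotheses of Lemma~\ref{l62}: (i) $\|\tilde u_n\|_{S(\dot H^{s_c};[0,T])}\leq A$ uniformly — this uses the pairwise divergence $|t_n^j-t_n^k|+|x_n^j-x_n^k|\to\infty$, which makes the profiles asymptotically non-interacting in every Strichartz norm, so the norm of the sum is controlled by the sum of the (bounded) norms; (ii) $\|e^{it\Delta}(u_n(0)-\tilde u_n(0))\|_{S(\dot H^{s_c};[0,T])}\leq\epsilon$ for $n$ large — by \eqref{pc}, $u_n(0)-\tilde u_n(0)=W_n^M(0)-W_n^M(0)=$ the remainder error, which is $e^{it\Delta}\tilde W_n^M$-small by the last bullet of Proposition~\ref{p61} after choosing $M$ large then $n$ large; (iii) the error term $e_n=i(\tilde u_n)_t+\Delta\tilde u_n+|\tilde u_n|^{p-1}\tilde u_n$ is small in $S'(\dot H^{-s_c};[0,T])$. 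For (iii), since each $v^j$ and $W_n^M$ solve \eqref{1.1} exactly, $e_n=|\sum v^j+W_n^M|^{p-1}(\sum v^j+W_n^M)-\sum|v^j|^{p-1}v^j-|W_n^M|^{p-1}W_n^M$, a sum of cross terms each containing a product of two profiles with divergent parameters; one estimates these by interpolation/H\"older in the $L^{q/p}_tL^{r/p}_x$ spaces (here the restriction $q\geq r$ from Remark~\ref{rad} is used), and they $\to 0$ as $n\to\infty$ by the orthogonality of the parameters, uniformly in $t\leq T$.

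With Lemma~\ref{l62} in force we get $\|u_n-\tilde u_n\|_{S(\dot H^{s_c};[0,T])}\leq c(A)\epsilon$, and by re-running the Strichartz estimates on the difference equation (propagating through the $N(A)$ subintervals) one upgrades this to smallness in $L^\infty_{[0,T]}\dot H^1$: $\|u_n(t)-\tilde u_n(t)\|_{H^1}\to 0$ uniformly on $[0,T]$. Then for each fixed $t\in[0,T]$,
\[
\|\nabla u_n(t)\|_2^2=\Big\|\nabla\Big(\sum_{j=1}^M v^j(t-t_n^j)+W_n^M(t)\Big)\Big\|_2^2+o_n(1),
\]
and expanding the square, the cross terms $\int\nabla v^j(t-t_n^j)\cdot\overline{\nabla v^k(t-t_n^k)}\,dx$ and $\int\nabla v^j(t-t_n^j)\cdot\overline{\nabla W_n^M(t)}\,dx$ tend to $0$ as $n\to\infty$ by the divergence of parameters (using that $v^j,W_n^M$ lie in $C([0,T];H^1)$ with uniform bounds and a density/approximation argument), giving \eqref{6.4}; tracking constants shows the $o_n(1)$ is uniform on $[0,T]$.

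The hard part will be step (iii), establishing the smallness of the perturbation error $e_n$ in the dual Strichartz norm uniformly on $[0,T]$: one must carefully handle the cross terms of the nonlinearity for a general power $p$ (rather than the integer $p=3$, where the algebra is a finite sum of monomials), which requires the H\"older relations among admissible pairs recorded in Remark~\ref{rpd} and a genuine use of the asymptotic orthogonality $|t_n^j-t_n^k|+|x_n^j-x_n^k|\to\infty$ to kill products of distinct profiles. A secondary subtlety is justifying a priori that all $v^j$ persist on $[0,T]$ — i.e.\ that no profile blows up before $T$ — which is where the hypothesis $\lim_n\|\nabla u_n\|_{L^\infty([0,T];L^2)}<\infty$ is essential and is extracted via the Pythagorean expansion \eqref{6.1}.
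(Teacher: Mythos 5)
Your setup (nonlinear profiles from Proposition~\ref{p61}, an approximate solution $\tilde u_n$, and the perturbation Lemma~\ref{l62}) is the same as the paper's, and you correctly flag the error estimate for $e_n$ and the persistence of the $v^j$ up to time $T$ as the delicate points (the paper handles the latter with a bootstrap: it defines $T^j$ as the maximal time on which $\|\nabla v^j\|_{L^\infty L^2}\le 2B$, works on $\widetilde T=\min_j T^j$, and uses the conclusion itself to force $\widetilde T=T$; your appeal to \eqref{6.1} alone is circular, since \eqref{6.1} holds only at $t=0$). The genuine gap is in your final step. First, the claimed upgrade from $\|u_n-\tilde u_n\|_{S(\dot H^{s_c};[0,T])}\le c(A)\epsilon$ to $\|u_n(t)-\tilde u_n(t)\|_{H^1}\to 0$ uniformly is not available: with the paper's choice $\tilde u_n=\sum_j v^j(\cdot-x_n^j,t-t_n^j)$ the initial discrepancy is $W_n^M$, whose $\dot H^1$ norm is bounded but in general not small (only $\|e^{it\Delta}W_n^M\|_{S(\dot H^{s_c})}$ is small), so the difference cannot be small in $L^\infty\dot H^1$. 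Second, even if you put $W_n^M(t)$ inside $\tilde u_n$ so that $w(0)=0$, your expansion of $\|\nabla\tilde u_n\|_2^2$ requires the cross terms $\int\nabla v^j(t-t_n^j)\cdot\overline{\nabla W_n^M(t)}$ to vanish, and these cannot be killed ``by divergence of parameters'': $W_n^M(t)=NLS(t)W_n^M$ is not a fixed profile translated by divergent shifts, and its asymptotic orthogonality to the $v^j$ in $\dot H^1$ at times $t>0$ is essentially the statement being proved.

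The paper avoids this entirely by a different endgame: from the $S(\dot H^{s_c})$-smallness of $u_n-\tilde u_n$ and the \emph{boundedness} of $\nabla(u_n-\tilde u_n)$ in $L^\infty L^2$, interpolation (estimate \eqref{p+1}, using $\tfrac{2N}{N-2s_c}<p+1<\tfrac{2N}{N-2}$) gives smallness of $u_n-\tilde u_n$ in $L^\infty([0,\widetilde T];L^{p+1})$ only. This yields the $L^{p+1}$ Pythagorean expansion \eqref{6.6} along the flow (the $W_n^M$ contribution being handled by a second application of Lemma~\ref{l62} with $u_n=W_n^M$ and $\tilde u_n=\sum_{j=M+1}^{M_1}v^j$). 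Combining \eqref{6.6} with the energy expansion \eqref{6.2} and conservation of $E$ and of $E(v^j)$, $E(W_n^M)$ gives \eqref{6.7}, and then \eqref{6.4} follows from the identity $\|\nabla f\|_2^2=2E(f)+\tfrac{2}{p+1}\|f\|_{p+1}^{p+1}$. You should replace your direct gradient expansion by this $L^{p+1}$-plus-energy argument; as written, the proof does not close.
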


\begin{proof}
Let~$M_{0}$~be such that for~$M_{1}\geq M_{0}$~and for~$\delta_{sd}$~in Lemma~\ref{sd},
we have~$$\|NLS(t)W_{n}^{M_{1}}\|_{S(\dot{H}^{s_{c}})}\leq \delta_{sd}/2$$
and $\|v^{j}\|_{S(\dot{H}^{s_{c}})}\leq \delta_{sd}$ for $j>M_{0}.$ Reorder the first~$M_{0}$~
profiles and introduce an index~$M_{2}$~,~$0\leq M_{2}\leq M_{0}$~, such that\\
$\bullet$~For each~$0\leq j\leq M_{2}$~we have~$t_{n}^{j}=0$~.(There is no ~$j$~in this category if~$M_{2}=0.$~ )\\
$\bullet$~For each~$M_{2}+1\leq j\leq M_{0}$~we have~$|t_{n}^{j}|\rightarrow\infty.$~(There is no ~$j$~in this category if~$M_{2}=M_{0}.$~ )

By definition of $M_{0}$,
 $v^{j}(t)$ for $j>M_{0}$ scatters in both time directions.
We claim that for fixed ~$T$~and~$M_{2}+1\leq j\leq M_{0}$~, ~$\| v^{j}(t-t_{n}^{j})\|_{S(\dot{H}^{s_{c}};[0,T])}\rightarrow 0$~
as~$n\rightarrow\infty$~. Indeed, take the case ~$t_{n}^{j}\rightarrow+\infty$~for example. By Proposition~\ref{p61},
~$\| v^{j}(-t)\|_{S(\dot{H}^{s_{c}};[0,\infty))}<\infty$~. Then for~$q<\infty$~, ~$\| v^{j}(-t)\|_{L^{q}([0,\infty);L^{r})}<\infty$~
implies ~$\| v^{j}(t-t_{n}^{j})\|_{L^{q}([0,T];L^{r})}\rightarrow 0.$~
On the other hand, since ~$v^{j}(t)$~in Proposition~\ref{p61}~is constructed by the existence of wave operators which converge in
~$H^{1}$~to a linear flow at ~$-\infty$~, then the ~$L^{\frac{2N}{N-2s_{c}}}$~ decay of the linear flow implies immediately that
 ~$\| v^{j}(t-t_{n}^{j})\|_{L^{\infty}([0,T];L^{\frac{2N}{N-2s_{c}}})}\rightarrow 0.$~

 Let~$B=\max(1,\lim_{n}\|\nabla u_{n}\|_{L^{\infty}([0,T];L^{2})})$~. For each~$1\leq j\leq M_{2}$~, define~$T^{j}\leq T$~to be
 the maximal forward time on which ~$\|\nabla v^{j}\|_{L^{\infty}([0,T^{j}];L^{2})}\leq2B.$
 ~Let~$\widetilde{T}=\min_{1\leq j\leq M_{2}}T^{j}$~, and if $M_{2}=0,$ we just take $\widetilde{T}=T.$
 Note that if we have proved ~\eqref{6.4}~holds for ~$T=\widetilde{T}$~, then by definition of~$T^{j}$~, using the continuity arguments,
 it follows from~\eqref{6.4}~ that for each~$1\leq j\leq M_{2},$~we have~$T^{j}=T.$~Hence~$\widetilde{T}=T.$~
 Thus, for the remainder of the proof, we just work on~$[0,\widetilde{T}].$~

 For each~$1\leq j\leq M_{2}$~,
~$\| v^{j}\|_{L^{\infty}([0,\widetilde{T}];L^{2})}
 =\|\psi^{j}\|_{2}\leq \lim_{n}\|\phi_{n}\|_{2}$~by~\eqref{6.1}~. Now, in view of the notation of~$S(\dot{H}^{s_{c}};[0,\widetilde{T}])$~and
  Remark \ref{rad},  we will give the~$S(\dot{H}^{s_{c}};[0,\widetilde{T}])$-norm
 boundedness of~$v^{j}$~in two cases:

 Let ~$(\tilde{q},\tilde{r})=(\frac{(p-1)(N+2)}{2},\frac{(p-1)(N+2)}{2}).$~
 Case 1, if~$\tilde{r}\geq\frac{2N}{N-2}$~and thus~$(\frac{2}{1-s_{c}},\frac{2N}{N-2})\in\Lambda_{s_{c}},$~
 then
\begin{align*}
\| v^{j}(t)\|_{S(\dot{H}^{s_{c}};[0,\widetilde{T}])}
&\leq C(\| v^{j}\|_{L^{\infty}([0,\widetilde{T}];L^{\frac{2N}{N-2s_{c}}})}
+\| v^{j}\|_{L^{\frac{2}{1-s_{c}}}([0,\widetilde{T}];L^{\frac{2N}{N-2}})})\\
&\leq C(\| v^{j}\|_{L^{\infty}([0,\widetilde{T}];L^{2})}^{1-s_{c}}
\| \nabla v^{j}\|_{L^{\infty}([0,\widetilde{T}];L^{2})}^{s_{c}}
+\widetilde{T}^{\frac{1-s_{c}}{2}}\| \nabla v^{j}\|_{L^{\infty}([0,\widetilde{T}];L^{2})})\\
&\leq C(1+\widetilde{T}^{\frac{1-s_{c}}{2}})B.
\end{align*}
 Case 2, if on the other hand~$\tilde{r}<\frac{2N}{N-2}.$ Since clearly $(\tilde{q},\tilde{r})\in\Lambda_{s_{c}},$
 \begin{align*}
\| v^{j}(t)\|_{S(\dot{H}^{s_{c}};[0,\widetilde{T}])}
&\leq C(\| v^{j}\|_{L^{\infty}([0,\widetilde{T}];L^{\frac{2N}{N-2s_{c}}})}
+\| v^{j}\|_{L^{\frac{(p-1)(N+2)}{2}}([0,\widetilde{T}];L^{\frac{(p-1)(N+2)}{2}})})\\
&\leq C(\| v^{j}\|_{L^{\infty}([0,\widetilde{T}];L^{2})}^{1-s_{c}}
\| \nabla v^{j}\|_{L^{\infty}([0,\widetilde{T}];L^{2})}^{s_{c}}
+\widetilde{T}^{\frac{(p-1)(N+2)}{2}}\| \nabla v^{j}\|_{L^{\infty}([0,\widetilde{T}];L^{2})})\\
&\leq C(1+\widetilde{T}^{\frac{(p-1)(N+2)}{2}})B.
\end{align*}

For fixed ~$M$~, let$$\tilde{u}_{n}(x,t)=\sum_{j=1}^{M}v^{j}(x-x_{n}^{j},t-t_{n}^{j}),$$
and let$$e_{n}=i\partial_{t}\tilde{u}_{n}+\Delta\tilde{u}_{n}+|\tilde{u}_{n}|^{p-1}\tilde{u}_{n}.$$
We claim that there exists~$A=A(\widetilde{T})$~(independent of ~$M$~)such that for all~$M> M_{0},$~
there exists~$n_{0}=n_{0}(M)$~such that for all~$n> n_{0},$~
$$\|\tilde{u}_{n}\|_{S(\dot{H}^{s_{c}};[0,\widetilde{T}])}\leq A.
\footnote{We in fact prove both~$\|\tilde{u}_{n}\|_{L^{\frac{(p-1)(N+2)}{2}}([0,\widetilde{T}];L^{\frac{(p-1)(N+2)}{2}})}$~
and~$\|\tilde{u}_{n}\|_{L^{\infty}([0,\widetilde{T}];L^{\frac{(2N)(N-2s_{c})}{2}})}$~are bounded, and thus,
by interpolation, for any~$(q,r)\in\Lambda_{s_{c}}~(q\geq r),$~we obtain the
$\|\tilde{u}_{n}\|_{L^{q}([0,\widetilde{T}];L^{r})}$~bound.}
 $$
Furthermore, we also claim that for each~$M>M_{0}$~and~$\epsilon>0,$~there exists~$n_{1}=n_{1}(M,\epsilon)$~
such that for~$n> n_{1}$~and for some~$ (q,r)\dot{H}^{-s_{c}}$~admissible,
$$\|e_{n}\|_{L^{q'}([0,\widetilde{T}];L^{r'})}\leq \epsilon.$$
Both of the two claims have exactly been verified in ~\cite{yuan}(in the proof of Proposition 4.4 there) , we shall not prove them here again.
Moreover, since~$u_{n}(0)-\tilde{u}_{n}(0)=W_{n}^{M},$~there exists~$M'=M'(\epsilon)$~large enough such that
for each~$M>M'$~there exists~$n_{2}=n_{2}(M')$~such that for~$n> n_{2},$~
~$$\|e^{it\Delta}(u(0)-\tilde{u}(0))\|_{S(\dot{H}^{s_{c}};[0,\widetilde{T}])}\leq\epsilon.$$~
For ~$A=A(\widetilde{T})$~in the first claim, Lemma~\ref{l62}~gives us ~$\epsilon_{0}=\epsilon_{0}(A)\ll1.$
~We select an arbitrary~~$\epsilon\leq\epsilon_{0}$~and obtain from above arguments an index~$M'=M'(\epsilon).$~
Now select an arbitrary~$M>M'$~, and set~$n'=\max(n_{0},n_{1},n_{2}).$~Then  by Lemma~\ref{l62}~and the above
arguments, for~$n> n',$~we have
\begin{equation}\label{6.5}
\|u_{n}-\tilde{u_{n}}\|_{S(\dot{H}^{s_{c}};[0,T])}\leq c(\widetilde{T}) \epsilon.
\end{equation}
In order to obtain the~$\|\nabla\tilde{u}_{n}\|_{L^{\infty}([0,\widetilde{T}];L^{2})}$~bound,
we also have to discuss~$j\geq M_{2}+1.$~ As is noted in the first paragraph of the proof,
~$\| v^{j}(t-t_{n}^{j})\|_{S(\dot{H}^{s_{c}};[0,\widetilde{T}])}\rightarrow 0$~as~$n\rightarrow\infty.$~
By Strichartz estimate we can easily get~
$\|\nabla v^{j}(t-t_{n}^{j})\|_{L^{\infty}([0,\widetilde{T}];L^{2})}\leq C\|\nabla v^{j}(-t_{n}^{j})\|_{2}.$~
By the pairwise divergence of parameters,
\begin{align*}
\|\nabla\tilde{u}_{n}\|^{2}_{L^{\infty}([0,\widetilde{T}];L^{2})}
&=\sum_{j=1}^{M_{2}}\|\nabla v^{j}(t)\|^{2}_{L^{\infty}([0,\widetilde{T}];L^{2})}
+\sum_{M_{2}+1}^{M}\|\nabla v^{j}(t-t_{n}^{j})\|^{2}_{L^{\infty}([0,\widetilde{T}];L^{2})}+o_{n}(1)\\
&\leq C\left(M_{2}B^{2}+\sum_{M_{2}+1}^{M}\|\nabla NLS(-t_{n}^{j})\psi^{j}\|_{2}^{2}+o_{n}(1)\right)\\
&\leq C\left(M_{2}B^{2}+\|\nabla\phi_{n}\|_{2}^{2}+o_{n}(1)\right)\\
&\leq C\left(M_{2}B^{2}+B^{2}+o_{n}(1)\right).
\end{align*}
Note that ~$\frac{2N}{N-2s_{c}}<p+1<\frac{2N}{N-2},$~then for some ~$0<\theta<1$~and from~\eqref{6.5}~we have
\begin{align}\label{p+1}
\|u_{n}-\tilde{u}_{n}\|_{L^{\infty}([0,\widetilde{T}];L^{p+1})}
&\leq C\left(\|u_{n}-\tilde{u}_{n}\|^{\theta}_{L^{\infty}([0,\widetilde{T}];L^{\frac{2N}{N-2s_{c}}})}
\|\nabla(u_{n}-\tilde{u}_{n})\|^{1-\theta}_{L^{\infty}([0,\widetilde{T}];L^{2})}
\right)\\ \nonumber
&\leq c(\widetilde{T})^{\theta}\left(M_{2}B^{2}+B^{2}+o_{n}(1)\right)^{\frac{1-\theta}{2}}\epsilon^{\theta}.
\end{align}

Now in the sequel we first replace the large parameter
 ~$M$~ in the notation ~$\tilde{u}_{n}$~and all other arguments above for~$M_{1}$~ which appears at the beginning of our proof.
 Then for any fixed ~$M,$~we will prove  \eqref{6.4}  on $[0,\widetilde{T}].$  In fact, we need only to establish that, for each
 ~$t\in[0,\widetilde{T}],$
\begin{equation}\label{6.6}
\|u_{n}\|_{p+1}^{p+1}=\sum_{j=1}^{M}\| v^{j}(t-t_{n}^{j})\|_{p+1}^{p+1}+\| W_{n}^{M}(t)\|_{p+1}^{p+1}+o_{n}(1).
\end{equation}
Since then by~\eqref{6.2}~and the energy conservation we have
\begin{equation}\label{6.7}
E(u_{n}(t))=\sum_{j=1}^{M}E(v^{j}(t-t_{n}^{j}))+E(W_{n}^{M}(t))+o_{n}(1).
\end{equation}
Thus ~\eqref{6.6}~combined with~\eqref{6.7}~gives~\eqref{6.4}~, which completes our proof. So now what is the remainder
 is to establish~\eqref{6.6}.

 We first apply the perturbation theory Lemma \ref{l62} to $u_n=W_{n}^{M}$ and $\tilde{u}_n=\sum_{j=M+1}^{M_{1}}v^{j}(t-t_{n}^{j})$.
 For any fixed  $M<M_{1}$,
by the profile composition~\eqref{pc}~and the definition of~$W_{n}^{M}(t)$~and
 ~$v^{j}(t),$ similar to the above two claims and the arguments followed, we obtain
$$
\|W_{n}^{M}(t)-\sum_{j=M+1}^{M_{1}}v^{j}(t-t_{n}^{j})\|_{p+1}\rightarrow 0\ \ as\ \ n\rightarrow\infty.$$
Then by the pairwise divergence of parameters,
\begin{align*}\label{}
\| u_{n}\|_{p+1}^{p+1}&=\| \tilde{u}_{n}\|_{p+1}^{p+1}+o_{n}(1)\\
&=\|\sum_{j=1}^{M_{1}} v^{j}(t-t_{n}^{j})\|_{p+1}^{p+1}+o_{n}(1)\\
&=\sum_{j=1}^{M}\| v^{j}(t-t_{n}^{j})\|_{p+1}^{p+1}+\|\sum_{j=M+1}^{M_{1}}v^{j}(t-t_{n}^{j})\|_{p+1}^{p+1}+o_{n}(1)\\
&=\sum_{j=1}^{M}\| v^{j}(t-t_{n}^{j})\|_{p+1}^{p+1}+\| W_{n}^{M}(t)\|_{p+1}^{p+1}+o_{n}(1).
\end{align*}
If on the other hand~$M\geq M_{1}$,~ we then easily get from the selection of~$M_{1}$~at the beginning of our proof that
~$\|W_{n}^{M}(t)\|_{p+1}=o_{n}(1)$~and~\eqref{p+1}~implies ~\eqref{6.6}.

\end{proof}

\begin{lemma}\label{l64}
(Profile Reordering).Let ~$\phi_{n}(x)$ ~be a  bounded
sequence in ~$H^{1}$~and let ~$\lambda_{0}>1.$~ Suppose that ~$M(\phi_{n})=M(Q),$~
 ~$E(\phi_{n})/E(Q)=\omega_{1}\lambda_{n}^{2}-\omega_{2}\lambda_{n}^{\frac{N(p-1)}{2}}$~
with~$\lambda_{n}\geq\lambda_{0}>1$~and~$\|\nabla \phi_{n}\|_{2}/\|\nabla Q\|_{2}\geq\lambda_{n}$~for each~$n.$~
Then, for a given~$M,$~ the profiles can be reordered so that there exist~$1\leq M_{1}\leq M_{2}\leq M$~and\\
(1) ~For each~$1\leq j\leq M_{1},$~we have~$t_{n}^{j}=0$~and~$v^{j}(t)\equiv NLS(t)\psi^{j}$~does not scatter as
~$t\rightarrow+\infty.$~(We in fact assert that at least one~$j$~belongs to this category.)\\
(2)~For each~$M_{1}+1\leq j\leq M_{2},$~we have~$t_{n}^{j}=0$~and~$v^{j}(t)$~scatters as
~$t\rightarrow+\infty.$~(There is no  ~$j$~in this category if~$M_{2}=M_{1}.$~ )\\
(3)~For each~$M_{2}+1\leq j\leq M$~we have~$|t_{n}^{j}|\rightarrow\infty.$~(There is no ~$j$~in this category if~$M_{2}=M.$~ )

\end{lemma}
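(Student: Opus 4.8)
The plan is to apply the nonlinear profile decomposition of Proposition~\ref{p61} to $\phi_{n}$,
$$\phi_{n}(x)=\sum_{j=1}^{M}NLS(-t_{n}^{j})\psi^{j}(x-x_{n}^{j})+W_{n}^{M}(x),$$
and then merely to relabel the profiles according to information already contained in that proposition. Recall that each $j$ has either $t_{n}^{j}=0$ or $|t_{n}^{j}|\to\infty$; put the latter last, as indices $M_{2}+1\le j\le M$, which gives (3). Among the profiles with $t_{n}^{j}=0$, list first (indices $1\le j\le M_{1}$) those $v^{j}(t)=NLS(t)\psi^{j}$ that fail to scatter as $t\to+\infty$, then (indices $M_{1}+1\le j\le M_{2}$) those that do; this yields the $t_{n}^{j}=0$ parts of (1) and (2). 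The only statement carrying content is that $M_{1}\ge1$, i.e. at least one profile has vanishing time shift and does not scatter forward.

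To prove $M_{1}\ge1$ I would argue by contradiction, assuming that \emph{every} profile with $t_{n}^{j}=0$ scatters as $t\to+\infty$. Set $\tilde u_{n}(x,t)=\sum_{j=1}^{M}v^{j}(x-x_{n}^{j},t-t_{n}^{j})$ and $e_{n}=i\partial_{t}\tilde u_{n}+\Delta\tilde u_{n}+|\tilde u_{n}|^{p-1}\tilde u_{n}$. One then checks that $\tilde u_{n}$ is a good approximate solution of \eqref{1.1} on $[0,\infty)$: its norm $\|\tilde u_{n}\|_{S(\dot{H}^{s_{c}};[0,\infty))}$ is bounded uniformly in $n$ for large $n$, since the $t_{n}^{j}=0$ profiles are assumed to scatter forward, the profiles with $t_{n}^{j}\to-\infty$ scatter forward by Proposition~\ref{p61}, and the profiles with $t_{n}^{j}\to+\infty$ are controlled through the one-sided bound $\|NLS(-t)\psi^{j}\|_{S(\dot{H}^{s_{c}};[0,\infty))}<\infty$ of Proposition~\ref{p61} together with the scattering theory of Section~2 applied to $\psi^{j}$ (whose mass and energy are constrained by the Pythagorean expansions \eqref{6.1}, \eqref{6.2} and the sub-threshold condition on $\phi_{n}$); meanwhile $\|e_{n}\|_{S'(\dot{H}^{-s_{c}};[0,\infty))}$ and $\|e^{it\Delta}W_{n}^{M}\|_{S(\dot{H}^{s_{c}};[0,\infty))}$ are made as small as we please by taking first $M$ and then $n$ large, exactly as in the proof of Lemma~\ref{l63}. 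Feeding this into the perturbation theory, Lemma~\ref{l62}, gives $\|u_{n}\|_{S(\dot{H}^{s_{c}};[0,\infty))}\le C$ for all large $n$; in particular $u_{n}=NLS(t)\phi_{n}$ is forward-global and scatters in $H^{1}$ as $t\to+\infty$.

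It then remains to rule out forward scattering. Fix one such large $n$. Scattering forces $\|u_{n}(t)\|_{p+1}\to0$, so conservation of energy and mass, together with $M(\phi_{n})=M(Q)$ and $\|\nabla Q\|_{2}^{2}=2\omega_{1}E(Q)$ from \eqref{2.2}, give
$$\lim_{t\to+\infty}\frac{\|\nabla u_{n}(t)\|_{2}^{2}}{\|\nabla Q\|_{2}^{2}}=\frac{2E(\phi_{n})}{\|\nabla Q\|_{2}^{2}}=\lambda_{n}^{2}-\frac{\omega_{2}}{\omega_{1}}\lambda_{n}^{\frac{N(p-1)}{2}}<\lambda_{n}^{2}.$$
On the other hand, since $M(\phi_{n})=M(Q)$ and $E(\phi_{n})/E(Q)=\omega_{1}\lambda_{n}^{2}-\omega_{2}\lambda_{n}^{\frac{N(p-1)}{2}}$ with $\lambda_{n}>1$ (so $E(\phi_{n})<E(Q)$ and $\lambda_{n}$ is precisely the root $\lambda$ of \eqref{2.6} attached to $\phi_{n}$), while $\|\nabla\phi_{n}\|_{2}/\|\nabla Q\|_{2}\ge\lambda_{n}$, Proposition~\ref{p21} places $u_{n}$ in case (2), whence $\|\nabla u_{n}(t)\|_{2}^{2}/\|\nabla Q\|_{2}^{2}\ge\lambda_{n}^{2}$ for all $t\ge0$; this contradicts the displayed limit. (When $E(\phi_{n})\le0$ the contradiction is immediate, since a forward-scattering solution satisfies $E=\frac{1}{2}\lim_{t}\|\nabla u_{n}(t)\|_{2}^{2}\ge0$ while $\|\nabla u_{n}(t)\|_{2}/\|\nabla Q\|_{2}\ge\lambda_{n}>1$.) Hence $M_{1}\ge1$, and the reordering above has all the asserted properties.

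I expect the main obstacle to be the uniform-in-$n$ bound $\|u_{n}\|_{S(\dot{H}^{s_{c}};[0,\infty))}\le C$ — that is, running the perturbation argument over the whole forward half-line rather than over a compact interval as in Lemma~\ref{l63}. Concretely, the delicate point is to control the contribution of the profiles with $t_{n}^{j}\to+\infty$ to $\|\tilde u_{n}\|_{S(\dot{H}^{s_{c}};[0,\infty))}$ using only the one-sided Strichartz finiteness of Proposition~\ref{p61} and the sub-threshold constraints on the $\psi^{j}$, and to verify that the smallness of $e_{n}$ and of $e^{it\Delta}W_{n}^{M}$ persists globally in forward time; this is where Remark~\ref{rpd} (the nonlinear $\dot{H}^{s_{c}}$-Strichartz estimate) and the global theory of Section~2 are genuinely used, in the same spirit as \cite{holmer10} and \cite{radial}.
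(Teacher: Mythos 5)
Your relabeling of the profiles and your final algebraic step (forward scattering of $u_{n}$ would force $\lim_{t\to+\infty}\|\nabla u_{n}(t)\|_{2}^{2}=2E(\phi_{n})<\lambda_{n}^{2}\|\nabla Q\|_{2}^{2}$, against case (2) of Proposition~\ref{p21}) are fine, but the middle step --- upgrading ``every $t_{n}^{j}=0$ profile scatters forward'' to ``$u_{n}$ scatters forward'' by running the perturbation lemma on all of $[0,\infty)$ --- has a genuine gap, and it is precisely the point you flag as delicate. For a profile with $t_{n}^{j}\to+\infty$, Proposition~\ref{p61} gives only $\|NLS(-t)\psi^{j}\|_{S(\dot{H}^{s_{c}};[0,\infty))}<\infty$, i.e. control of $v^{j}(s)$ for $s\le 0$; but on the half-line $t\in[0,\infty)$ the argument $t-t_{n}^{j}$ of this profile sweeps out $[-t_{n}^{j},\infty)$, so you would need $v^{j}$ to be forward-global with finite forward $S(\dot{H}^{s_{c}})$ norm, and nothing supplies that. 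The hypotheses impose no upper bound on $\|\nabla\phi_{n}\|_{2}$, and even when the Pythagorean expansions \eqref{6.1}--\eqref{6.2} put $M(\psi^{j})^{\frac{1-s_{c}}{s_{c}}}E(\psi^{j})$ below $M(Q)^{\frac{1-s_{c}}{s_{c}}}E(Q)$, Theorem~\ref{t22} additionally requires case (1) of Proposition~\ref{p21}, which an individual profile need not satisfy. Since your contradiction hypothesis constrains only the $t_{n}^{j}=0$ profiles, a profile with $t_{n}^{j}\to+\infty$ may blow up or grow in forward time, in which case $\tilde{u}_{n}$ is not a global approximate solution and $u_{n}$ need not scatter. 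This is why the perturbation machinery (Lemmas~\ref{l62} and~\ref{l63}) is only deployed on compact intervals $[0,T]$ in this paper.

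The paper closes the argument at a single finite time instead. From $M(\phi_{n})=M(Q)$, the energy identity and $\|\nabla\phi_{n}\|_{2}\ge\lambda_{n}\|\nabla Q\|_{2}$ one gets the quantitative lower bound $\|\phi_{n}\|_{p+1}^{p+1}\ge\lambda_{0}^{\frac{N(p-1)}{2}}\|Q\|_{p+1}^{p+1}$ (display \eqref{6.8}). Via the linear $L^{p+1}$ expansion \eqref{6.3} this forces at least one bounded $t_{n}^{j}$ (otherwise every profile's $L^{p+1}$ norm vanishes); and the same lower bound, valid at any time by the dichotomy, is then tested at one large but finite $t_{0}$ against the $L^{p+1}$ Pythagorean expansion \eqref{6.6} along the flow: if all $t_{n}^{j}=0$ profiles scattered, their $L^{p+1}$ norms at $t_{0}$ would be small, the $|t_{n}^{j}|\to\infty$ profiles contribute $o_{n}(1)$ at the fixed time $t_{0}$, and $W_{n}^{M}(t_{0})$ is small, a contradiction. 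I suggest replacing your global-in-time perturbation step with this fixed-time $L^{p+1}$ argument.
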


\begin{proof}
Firstly, we prove that there exists at least one~$j$~such that~$t_{n}^{j}$~converges as~$n\rightarrow\infty.$~ In fact,
\begin{align}\label{6.8}
\frac{\|\phi_{n}\|^{p+1}_{p+1}}{\| Q\|^{p+1}_{p+1}}
&=-\frac{1}{\omega_{2}}\frac{E(\phi_{n})}{E(Q)}+\frac{N(p-1)}{4}\frac{\|\nabla \phi_{n}\|_{2}^{2}}{\|\nabla Q\|_{2}^{2}}\\ \nonumber
&\geq-\frac{1}{\omega_{2}}\left(\omega_{1}\lambda_{n}^{2}-\omega_{2}\lambda_{n}^{\frac{N(p-1)}{2}}\right)
+\frac{N(p-1)}{4}\lambda_{n}^{2}\\ \nonumber
&=\lambda_{n}^{\frac{N(p-1)}{2}}\geq\lambda_{0}^{\frac{N(p-1)}{2}}>1.
\end{align}
If~$|t_{n}^{j}|\rightarrow\infty,$~then~$\|NLS(-t_{n}^{j})\psi^{j}\|_{p+1}\rightarrow 0$~and ~\eqref{6.3}~implies our claim.
Now if~$j$~is such that~$t_{n}^{j}$~converges as~$n\rightarrow\infty,$~ then we might as well assume ~$t_{n}^{j}=0.$~

Reorder the profiles~$\psi^{j}$~so that for~$1\leq j\leq M_{2},$~~we have~$t_{n}^{j}=0,$~
and for~$M_{2}+1\leq j\leq M$~we have~$|t_{n}^{j}|\rightarrow\infty.$~ What is the remainder is to show that there exists
one~$j,$~$1\leq j\leq M_{2},$~such that~$v^{j}(t)$~does not scatter as
~$t\rightarrow+\infty.$~To the contrary, if for all~$1\leq j\leq M_{2},$ $v^{j}(t)$ scatters,
 then we have $\lim_{t\rightarrow+\infty}\|v^{j}(t)\|_{p+1}=0.$ Let  $t_{0}$~be sufficiently large so that for all~
$1\leq j\leq M_{2},$~we have~$\|v^{j}(t_{0})\|_{p+1}^{p+1}\leq\epsilon/M_{2}.$~The ~$L^{p+1}$~orthogonality~\eqref{6.6}~
along the NLS flow and an argument as~\eqref{6.8}~ imply
\begin{align*}\label{}
\lambda_{0}^{\frac{N(p-1)}{2}}\| Q\|_{p+1}^{p+1}
&\leq\|u_{n}(t_{0})\|_{p+1}^{p+1}\\
&=\sum_{j=1}^{M_{2}}\| v^{j}(t_{0})\|_{p+1}^{p+1}+\sum_{j=M_{2}+1}^{M}\|v^{j}(t_{0}-t_{n}^{j})\|_{p+1}^{p+1}
+\| W_{n}^{M}(t_{0})\|_{p+1}^{p+1}+o_{n}(1).
\end{align*}
We know from ~Proposition~\ref{p61}~that,as~$n\rightarrow+\infty,$~~$\sum_{j=M_{2}+1}^{M}\|v^{j}(t_{0}-t_{n}^{j})\|_{p+1}^{p+1}\rightarrow 0,$~
and thus we have
\begin{align*}\label{}
\lambda_{0}^{\frac{N(p-1)}{2}}\| Q\|_{p+1}^{p+1}
\leq\epsilon
+\| W_{n}^{M}(t_{0})\|_{p+1}^{p+1}+o_{n}(1).
\end{align*}
This gives a contradiction since $W_n^M(t)$ is a scattering solution.

\end{proof}

\section{ Inductive Argument and Existence of a Critical Solution}

We now begin to prove  Theorem~\ref{th1}.~Note from Remark~\ref{p}~ that we have reduced Theorem~\ref{th1}~
to the case~$P(u)=0,$~thus we first
 give some definitions :
\begin{definition}\label{d71}
Let ~$\lambda>1.$~We say that~$\exists GB(\lambda,\sigma)$~holds if there exists a solution~$u(t)$~to\\~\eqref{1.1}~
such that$$P(u)=0,\ \ \ \ M(u)=M(Q),\ \ \ \frac{E(u)}{E(Q)}=\omega_{1}\lambda^{2}-\omega_{2}\lambda^{\frac{N(p-1)}{2}}$$
and$$\lambda\leq\frac{\|\nabla u(t)\|_{2}}{\|\nabla Q\|_{2}}\leq\sigma\ \ \ for\  all \ \ t\geq 0.$$
\end{definition}

$\exists GB(\lambda,\sigma)$~means that there exist solutions with energy~$\omega_{1}\lambda^{2}-\omega_{2}\lambda^{\frac{N(p-1)}{2}}$~globally bounded by~$\sigma.$~
Thus by Proposition~\ref{p51}~, ~$\exists GB(\lambda,\lambda(1+\rho_{0}(\lambda_{0})))$~is false for all~$\lambda\geq\lambda_{0}>1.$~

The statement ~$\exists GB(\lambda,\sigma)$~is false is equivalent to say that
for every solution~$u(t)$~to~\eqref{1.1}~\\with~$M(u)=M(Q)$~and~
$E(u)/E(Q)=\omega_{1}\lambda^{2}-\omega_{2}\lambda^{\frac{N(p-1)}{2}}$~
such that~$\|\nabla u(t)\|_{2}/\|\nabla Q\|_{2}\geq\lambda$~for all~$t,$~
there must exists a time ~$t_{0}\geq0$~such that~$\|\nabla u(t_{0})\|_{2}/\|\nabla Q\|_{2}\geq\sigma.$~
By resetting the initial time,  we can find a sequence~$t_{n}\rightarrow\infty$~such that
~$\|\nabla u(t_{n})\|_{2}/\|\nabla Q\|_{2}\geq\sigma$~for all~$n.$~

Note that if~$\lambda\leq\sigma_{1}\leq\sigma_{2},$~then ~$\exists GB(\lambda,\sigma_{2})$~is false implies
~$\exists GB(\lambda,\sigma_{1})$~is false. We will induct on the statement and define a threshold.

\begin{definition}\label{d72}
(The Critical Threshold.)~Fix~$\lambda_{0}>1.$~Let~$\sigma_{c}=\sigma_{c}(\lambda_{0})$~be the supremum of all
~$\sigma>\lambda_{0}$~such that~$\exists GB(\lambda,\sigma)$~is false for all~$\lambda$~such that~$\lambda_{0}\leq\lambda\leq\sigma.$~
\end{definition}

Proposition~\ref{p51}~implies that ~$\sigma_{c}(\lambda_{0})>\lambda_{0}.$~Let ~$u(t)$~be any solution to~\eqref{1.1}~
with~$P(u)=0,$~~$M(u)=M(Q),$~
$E(u)/E(Q)\leq\omega_{1}\lambda_{0}^{2}-\omega_{2}\lambda_{0}^{\frac{N(p-1)}{2}}$~
and~$\|\nabla u(0)\|_{2}/\|\nabla Q\|_{2}>1.$~If~$\lambda_{0}>1$~and~$\sigma_{c}=\infty,$~we claim that
there exists a sequence of times~$t_{n}$~such that~$\|\nabla u(t_{n})\|_{2}\rightarrow\infty.$~
In fact, if not, and let ~$\lambda\geq\lambda_{0}$~be such that~
$E(u)/E(Q)=\omega_{1}\lambda^{2}-\omega_{2}\lambda^{\frac{N(p-1)}{2}}.$~
Since there is no sequence~$t_{n}$~such that~$\|\nabla u(t_{n})\|_{2}\rightarrow\infty,$~
there must exists~$\sigma<\infty$~such that
~$\lambda\leq\|\nabla u(t)\|_{2}/\|\nabla Q\|_{2}\leq\sigma$~for all~$t\geq0,$~
which means that~$\exists GB(\lambda,\sigma)$~holds true. Thus ~$\sigma_{c}\leq\sigma<\infty$~
and we get a contradiction.

In view of the above claim, if we can prove that for every~$\lambda_{0}>1$~then~$\sigma_{c}(\lambda_{0})=\infty,$~we then
have in fact proved our Theorem~\ref{th1}.~Thus, in the sequel, we shall carry it out by contradiction; more precisely , fix ~$\lambda_{0}>1$~
and assume~$\sigma_{c}<\infty,$~we shall work toward a absurdity. (It, of course, suffices to do this for ~$\lambda_{0}$~close to 1,
and so we might as well assume that~$\lambda_{0}<(\frac{\omega_{1}}{\omega_{2}})^{\frac{2}{N(p-1)-4}},$
which will be convenient in the sequel.)
For that purpose, we need first to obtain the existence of a critical solution:

\begin{lemma}\label{l81}
$\sigma_{c}(\lambda_{0})<\infty$.~
Then there exist initial data~$u_{c,0}$~and~$\lambda_{c}\in[\lambda_{0},\sigma_{c}(\lambda_{0})]$~such that ~$u_{c}(t)\equiv NLS(t)u_{c,0}$~
is global, ~$P(u_{c})=0,$~~$M(u_{c})=M(Q),$~
~$E(u_{c})/E(Q)=\omega_{1}\lambda_{c}^{2}-\omega_{2}\lambda_{c}^{\frac{N(p-1)}{2}},$~and~
$$\lambda_{c}\leq\frac{\|\nabla u_{c}(t)\|_{2}}{\|\nabla Q\|_{2}}\leq\sigma_{c}\ \ \ for\  all \ \ t\geq 0.$$~

\end{lemma}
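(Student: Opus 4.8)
The plan is to run the standard minimal–element (concentration–compactness / induction-on-energy) argument of Kenig–Merle and \cite{holmer10}, with the nonlinear profile decomposition of Proposition~\ref{p61} at its core. First I would use the very definition of the critical threshold: since $\sigma_{c}(\lambda_{0})<\infty$, one can pick $\sigma_{n}\searrow\sigma_{c}$ and $\lambda_{n}\in[\lambda_{0},\sigma_{n}]$ for which $\exists GB(\lambda_{n},\sigma_{n})$ holds, hence solutions $u_{n}(t)=NLS(t)\phi_{n}$ with $P(u_{n})=0$, $M(u_{n})=M(Q)$, $E(u_{n})/E(Q)=\omega_{1}\lambda_{n}^{2}-\omega_{2}\lambda_{n}^{\frac{N(p-1)}{2}}$ and $\lambda_{n}\le\|\nabla u_{n}(t)\|_{2}/\|\nabla Q\|_{2}\le\sigma_{n}$ for all $t\ge 0$. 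Passing to a subsequence I would arrange $\lambda_{n}\to\lambda_{c}\in[\lambda_{0},\sigma_{c}]$; note $\{\phi_{n}\}$ is bounded in $H^{1}$, the $u_{n}$ are forward–global, and $\sup_{n}\|\nabla u_{n}\|_{L^{\infty}([0,\infty);L^{2})}<\infty$.

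Next I would apply Proposition~\ref{p61} to $\{\phi_{n}\}$ together with the reordering of Lemma~\ref{l64}: for each $M$ this produces profiles $\psi^{j}$, parameters $t_{n}^{j},x_{n}^{j}$, and indices $1\le M_{1}\le M_{2}\le M$ with $t_{n}^{1}=0$ and $v^{1}(t)=NLS(t)\psi^{1}$ not scattering forward. The heart of the proof is to show the decomposition is trivial: $\psi^{j}=0$ for $j\ge 2$ and $\|W_{n}^{M}\|_{H^{1}}\to 0$, so that after a space translation $\phi_{n}\to u_{c,0}:=\psi^{1}$ strongly in $H^{1}$ along a subsequence. The engine here is three identities. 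Since $NLS(t)$ conserves mass, the $\dot H^{0}$- and energy-Pythagorean expansions \eqref{6.1}--\eqref{6.2} give $\sum_{j}M(\psi^{j})+M(W_{n}^{M})\le M(Q)+o_{n}(1)$ and $E(\phi_{n})=\sum_{j}E(\psi^{j})+E(W_{n}^{M})+o_{n}(1)$; and the Pythagorean decomposition along the flow (Lemma~\ref{l63}), which applies precisely because the $u_{n}$ are forward–global with uniformly bounded gradient, yields $\|\nabla u_{n}(t)\|_{2}^{2}=\sum_{j}\|\nabla v^{j}(t-t_{n}^{j})\|_{2}^{2}+\|\nabla W_{n}^{M}(t)\|_{2}^{2}+o_{n}(1)$ uniformly on compact time intervals.

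Assuming a nontrivial splitting, I would argue for a contradiction as follows. Because $\lambda_{n}>1$ one has $E(\phi_{n})/E(Q)=\omega_{1}\lambda_{n}^{2}-\omega_{2}\lambda_{n}^{\frac{N(p-1)}{2}}<1$, and the scattering pieces (those with $|t_{n}^{j}|\to\infty$, those with $M_{1}<j\le M_{2}$, and the tail $W_{n}^{M}$ for $M$ large) carry nonnegative energy; hence any surviving piece has $E(\psi^{j})<E(Q)$ while $M(\psi^{j})\le M(Q)$. Since $E(Q)>0$ and $\frac{1-s_{c}}{s_{c}}>0$, this forces $M(\psi^{j})^{\frac{1-s_{c}}{s_{c}}}E(\psi^{j})<M(Q)^{\frac{1-s_{c}}{s_{c}}}E(Q)$ irrespective of the sign of $E(\psi^{j})$, so every piece is sub-threshold. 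Rescaling each piece to mass $M(Q)$ (a rescaling that, for mass strictly below $M(Q)$, strictly lowers the gradient ratio by a factor $(M(\psi^{j})/M(Q))^{\frac{1-s_{c}}{2s_{c}}}$) and invoking the dichotomy Proposition~\ref{p21} and the scattering Theorem~\ref{t22}: a sub-threshold piece either scatters forward — impossible for $v^{1}$ — or lies in the second case of Proposition~\ref{p21}, where its rescaled gradient ratio stays above its own $\lambda^{j}>1$ yet, via Lemma~\ref{l63} and the strict mass loss, remains bounded strictly below $\sigma_{c}$ for $n$ large; this produces a solution witnessing $\exists GB(\lambda^{j},\tilde\sigma)$ with $\tilde\sigma<\sigma_{c}$, contradicting the maximality of $\sigma_{c}$, while the lower bound $\|\nabla u_{n}(t)\|_{2}\ge\lambda_{n}\|\nabla Q\|_{2}$ simultaneously forbids $u_{n}$ itself from scattering. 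Hence only $\psi^{1}$ survives and $W_{n}^{M}\to 0$ in $H^{1}$. Finally I would pass to the limit: from $\phi_{n}\to u_{c,0}$ in $H^{1}$, continuous dependence of the flow on bounded time intervals (a consequence of the perturbation Lemma~\ref{l62}), and the conservation laws, the solution $u_{c}(t)=NLS(t)u_{c,0}$ satisfies $P(u_{c})=0$, $M(u_{c})=M(Q)$, $E(u_{c})/E(Q)=\omega_{1}\lambda_{c}^{2}-\omega_{2}\lambda_{c}^{\frac{N(p-1)}{2}}$, and $\lambda_{c}\le\|\nabla u_{c}(t)\|_{2}/\|\nabla Q\|_{2}\le\sigma_{c}$ for all $t\ge 0$ (so $u_{c}$ is forward–global), which is the assertion.

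The step I expect to be the main obstacle is the triviality of the profile decomposition: because the individual profile energies need not be nonnegative, the bookkeeping that rules out two or more non-scattering profiles and extracts a strict drop below $\sigma_{c}$ in the gradient budget must be carried out carefully, intertwining the constraint $M(u_{n})=M(Q)$, the hypothesis $P(u_{n})=0$ (so that momentum plays no role), the precise shape of $\lambda\mapsto\omega_{1}\lambda^{2}-\omega_{2}\lambda^{\frac{N(p-1)}{2}}$ (which attains its maximum value $1$ at $\lambda=1$), and the minimality of $\sigma_{c}$, exactly in the spirit of \cite{holmer10}; everything else is a routine adaptation of the cubic $3D$ argument.
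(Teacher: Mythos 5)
Your proposal is correct and follows essentially the same route as the paper: a minimizing sequence from the definition of $\sigma_{c}$, the profile decomposition with reordering (Lemma~\ref{l64}), the mass/energy Pythagorean expansions to isolate a sub-threshold non-scattering profile $\psi^{1}$, and Lemma~\ref{l63} together with the inductive hypothesis (falsity of $\exists GB(\lambda,\sigma)$ for $\sigma<\sigma_{c}$) to force $v^{j}\equiv 0$ for $j\geq 2$ and $W_{n}^{M}\to 0$, so that $u_{c,0}=\psi^{1}$. The only differences are cosmetic: you phrase the final step contrapositively (a nontrivial splitting would produce a witness strictly below $\sigma_{c}$) where the paper argues directly via a time sequence $t_{k}$ along which $v^{1}$ approaches $\sigma_{c}$, and you gloss over the boundary cases $\tilde{\lambda}=\sigma_{c}$ and $\tilde{\lambda}>\sigma_{c}$ as well as the check $\tilde{\lambda}\geq\lambda_{0}$ when $E(\psi^{1})<0$, all of which the paper handles explicitly but routinely.
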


We call ~$u_{c}$~ a critical solution since by definition of~$\sigma_{c}$~we have that for all~$\sigma<\sigma_{c}$~
and all~$\lambda_{0}\leq\lambda\leq\sigma,$~$\exists GB(\lambda,\sigma)$ is false, i.e., there are no solutions~$u(t)$~for which
$$P(u)=0,\ \ \ \ M(u)=M(Q),\ \ \ \frac{E(u)}{E(Q)}=\omega_{1}\lambda^{2}-\omega_{2}\lambda^{\frac{N(p-1)}{2}}$$
and$$\lambda\leq\frac{\|\nabla u(t)\|_{2}}{\|\nabla Q\|_{2}}\leq\sigma\ \ \ for\  all \ \ t\geq 0.$$

\begin{proof}
By definition of ~$\sigma_{c},$~there exist sequence~$\lambda_{n}$~and~$\sigma_{n}$~such that~$\lambda_{0}\leq\lambda_{n}\leq\sigma_{n}$~
and $\sigma_{n}\downarrow\sigma_{c}$~for which~$\exists GB(\lambda_{n},\sigma_{n})$~holds. This means that there exists~$u_{n,0}$~
such that~$u_{n}(t)\equiv NLS(t)u_{n,0}$~is global with ~$P(u_{n})=0,$~~$M(u_{n})=M(Q),$~
~$E(u_{n})/E(Q)=\omega_{1}\lambda_{n}^{2}-\omega_{2}\lambda_{n}^{\frac{N(p-1)}{2}},$~and~
$$\lambda_{n}\leq\frac{\|\nabla u_{n}(t)\|_{2}}{\|\nabla Q\|_{2}}\leq\sigma_{n}\ \ \ for\  all \ \ t\geq 0.$$~
The boundedness of~$\lambda_{n}$~ make us pass to a subsequence such that ~$\lambda_{n}$~converges with a limit ~$\lambda'\in[\lambda_{0},\sigma_{c}].$~

According to Lemma~\ref{l64}~where we take~$\phi_{n}=u_{n,0},$~for~$M_{1}+1\leq j\leq M_{2},$~
~$v^{j}(t)\equiv NLS(t)\psi^{j}$~ scatter as~$t\rightarrow+\infty$~and combined with Proposition~\ref{p61},~ for
~$M_{2}+1\leq j\leq M,$~~$v^{j}$~also scatter in one or the other time direction. Thus by the scattering theory, for~$M_{1}+1\leq j\leq M,$~
we have~$E(v_{j})=E(\psi_{j})\geq 0$~and then by~\eqref{6.2}~
$$\sum_{j=1}^{M_{1}}E(\psi^{j})\leq E(\phi_{n})+o_{n}(1).$$
Thus there exists at least one~$1\leq j\leq M_{1}$~with
$$E(\psi^{j})\leq\max(\lim_{n}E(\phi_{n}),0),$$
which, without loss of generality, we might as well take~$j=1.$~
Since, by the profile composition, also
~$M(\psi^{1})\leq \lim_{n}M(\phi_{n})=M(Q),$~we then have
$$\frac{M^{\frac{1-s_{c}}{s_{c}}}(\psi^{1})E(\psi^{1})}{M^{\frac{1-s_{c}}{s_{c}}}(Q)E(Q)}\leq \max\left(\lim_{n}\frac{E(\phi_{n})}{E(Q)},0\right).$$
Thus, there exist~$\tilde{\lambda}\geq\lambda_{0}$~
\footnote{If ~$\lim_{n}E(\phi_{n})\geq 0,$~ we have~$\tilde{\lambda}\geq\lambda'\geq\lambda_{0};$~ while in the case~$\lim_{n}E(\phi_{n})<0,$~
we will have ~$\tilde{\lambda}\geq(\frac{\omega_{1}}{\omega_{2}})^{\frac{2}{N(p-1)-4}}>\lambda_{0}$~ though we might not have~$\tilde{\lambda}\geq\lambda'$~.}such that
$$\frac{M^{\frac{1-s_{c}}{s_{c}}}(\psi^{1})E(\psi^{1})}{M^{\frac{1-s_{c}}{s_{c}}}(Q)E(Q)}
=\omega_{1}\tilde{\lambda}^{2}-\omega_{2}\tilde{\lambda}^{\frac{N(p-1)}{2}}.$$
Note that by Lemma~\ref{l64},~~$v^{1}$~does not scatter, so it follows from Theorem~\ref{t22}~that\\
~$\|\psi^{1}\|^{\frac{1-s_{c}}{s_{c}}}_{2}\|\nabla\psi^{1}\|_{2}<\|Q\|^{\frac{1-s_{c}}{s_{c}}}_{2}\|\nabla Q\|_{2}$~cannot hold.
 Then by Proposition \ref{p21},
we must have $\|\psi^{1}\|^{\frac{1-s_{c}}{s_{c}}}_{2}\|\nabla\psi^{1}\|_{2}\geq\tilde{\lambda}\|Q\|^{\frac{1-s_{c}}{s_{c}}}_{2}\|\nabla Q\|_{2}.$

Now if $\tilde{\lambda}>\sigma_{c}$ and recall that $t_n^1=0,$ then for all  $t$ we know that
\begin{align}\label{8.0}
\tilde{\lambda}^{2}&\leq
\frac{\|v^{1}(t)\|^{2\frac{1-s_{c}}{s_{c}}}_{2}\|\nabla v^{1}(t)\|^{2}_{2}}{\|Q\|^{2\frac{1-s_{c}}{s_{c}}}_{2}\|\nabla Q\|^{2}_{2}}
\leq\frac{\|\nabla v^{1}(t)\|^{2}_{2}}{\|\nabla Q\|^{2}_{2}}
\leq\frac{\sum_{j=1}^{M}\|\nabla v^{j}(t-t_{n}^{j})\|^{2}_{2}+\|\nabla W_{n}^{M}(t)\|^{2}_{2}}{\|\nabla Q\|^{2}_{2}}.
\end{align}
Taking $t=0$ for example, Lemma \ref{l63} implies that
\begin{align*}
\tilde{\lambda}^{2}
&\leq\frac{\sum_{j=1}^{M}\|\nabla v^{j}(-t_{n}^{j})\|^{2}_{2}+\|\nabla W_{n}^{M}\|^{2}_{2}}{\|\nabla Q\|^{2}_{2}}
\leq\frac{\|\nabla u_{n}(0)\|^{2}_{2}}{\|\nabla Q\|^{2}_{2}}+o_{n}(1)\leq \sigma_{c}^{2}+o_{n}(1)
\end{align*}
which contradicts the assumption ~$\tilde{\lambda}>\sigma_{c}$. Hence we must have~$\tilde{\lambda}\leq\sigma_{c}$.~

Now if $\tilde{\lambda}<\sigma_{c},$
we know from the definition of~$\sigma_{c}$~that
~$\exists GB(\tilde{\lambda},\sigma_{c}-\delta)$~is false for any~$\delta>0$~sufficiently small,
and then there exists a nondecreasing sequence~$t_{k}$~of times such that
$$\lim_{k}\frac{\|v^{1}(t_{k})\|_{2}^{\frac{1-s_{c}}{s_{c}}}\|\nabla v^{1}(t_{k})\|_{2}}
{\|Q\|_{2}^{\frac{1-s_{c}}{s_{c}}}\|\nabla Q\|_{2}}\geq \sigma_{c}.$$
Note that~$t_{n}^{1}=0,$~then
\begin{align}\label{8.1}
\sigma_{c}^{2}-o_{k}(1)&\leq
\frac{\|v^{1}(t_{k})\|^{2\frac{1-s_{c}}{s_{c}}}_{2}\|\nabla v^{1}(t_{k})\|^{2}_{2}}{\|Q\|^{2\frac{1-s_{c}}{s_{c}}}_{2}\|\nabla Q\|^{2}_{2}}
\leq\frac{\|\nabla v^{1}(t_{k})\|^{2}_{2}}{\|\nabla Q\|^{2}_{2}}\\ \nonumber
&\leq\frac{\sum_{j=1}^{M}\|\nabla v^{j}(t_{k}-t_{n}^{j})\|^{2}_{2}+\|\nabla W_{n}^{M}(t_{k})\|^{2}_{2}}{\|\nabla Q\|^{2}_{2}}\\ \nonumber
&\leq\frac{\|\nabla u_{n}(t)\|^{2}_{2}}{\|\nabla Q\|^{2}_{2}}+o_{n}(1)\\ \nonumber
&\leq \sigma_{c}^{2}+o_{n}(1),
\end{align}
where by Lemma~\ref{l63}~we take~$n=n(k)$~large. Sending~$k\rightarrow \infty$~and hence ~$n(k)\rightarrow \infty,$~
we conclude that all inequalities must be equalities.
Thus  we conclude that~$W_{n}^{M}(t_{k})\rightarrow0$~
in $H^{1},$ $M(v^{1})=M(Q)$ and  $ v^{j}\equiv0$ for all~$j\geq2. $ Thus easily~$P(v^{1})=P(u_{n})=0.$~
On the other hand if ~$\tilde{\lambda}=\sigma_{c},$   we need not the inductive hypothesis but, similar to \eqref{8.0},  obtain
\begin{align*}
\sigma_{c}^{2}
&\leq\frac{\sum_{j=1}^{M}\|\nabla v^{j}(-t_{n}^{j})\|^{2}_{2}+\|\nabla W_{n}^{M}\|^{2}_{2}}{\|\nabla Q\|^{2}_{2}}
\leq\frac{\|\nabla u_{n}(0)\|^{2}_{2}}{\|\nabla Q\|^{2}_{2}}+o_{n}(1)\leq \sigma_{c}^{2}+o_{n}(1),
\end{align*}
and then again,  we conclude that~$W_{n}^{M}\rightarrow0$~
in $H^{1},$ $M(v^{1})=M(Q)$ and  $ v^{j}\equiv0$ for all~$j\geq2. $
 Moreover, by Lemma~\ref{l63},~for all ~$t$~$$\frac{\|\nabla v^{1}(t)\|^{2}_{2}}{\|\nabla Q\|^{2}_{2}}
 \leq\lim_{n}\frac{\|\nabla u_{n}(t)\|^{2}_{2}}{\|\nabla Q\|^{2}_{2}}\leq\sigma_{c}^{2}.$$
 Hence, we take~$u_{c,0}=v^{1}(0)=\psi^{1}$~and~$\lambda_{c}=\tilde{\lambda}$~to complete our proof.

\end{proof}

\section{Concentration of Critical Solutions and Proof of Theorem~\ref{th1}}

In this section, we will finally prove  Theorem \ref{1.1} by virtue of the precompactness of the flow of the critical solution.
To simplify notation, we take~$u(t)=u_{c}(t)$~in the sequel.

\begin{lemma}\label{l91}
There exists a path~$x(t)$~in~$\mathbb{R}^{N}$~such that
$$K\equiv\left\{u(t,\cdot-x(t))|t\geq 0\right\}\subset H^{1}$$
is precompact in ~$H^{1}.$~

\end{lemma}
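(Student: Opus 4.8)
The plan is to show that the critical solution $u=u_c$ is \emph{precompact modulo translations}, i.e.\ that there is a path $x(t)$ so that $\{u(t,\cdot-x(t)):t\ge 0\}$ is precompact in $H^1$. The standard way (following \cite{km}, \cite{nonradial}, \cite{holmer10}) is to argue by contradiction: if no such path makes the set precompact, then there exist $\epsilon_0>0$ and a sequence of times $t_n\to\infty$ (the times must escape to infinity, since on any compact time interval $\{u(t)\}$ is automatically precompact by continuity of the flow) such that the sequence $\phi_n:=u(t_n,\cdot)$ has no subsequence converging in $H^1$ even after spatial translation. First I would record that $\phi_n$ is a bounded sequence in $H^1$: indeed $M(\phi_n)=M(u)=M(Q)$, $E(\phi_n)=E(u)$, and $\lambda_c\le\|\nabla\phi_n\|_2/\|\nabla Q\|_2\le\sigma_c$ for all $n$ by the properties of the critical solution from Lemma~\ref{l81}. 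So the nonlinear profile decomposition Proposition~\ref{p61} applies to $\phi_n$.

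Next I would run the profile reordering Lemma~\ref{l64} on $\phi_n$ (the hypotheses hold with $\lambda_n\equiv\lambda_c$): this gives $1\le M_1\le M_2\le M$, with $t_n^j=0$ and $v^j$ nonscattering for $1\le j\le M_1$ (at least one such $j$ exists), $t_n^j=0$ and $v^j$ scattering for $M_1+1\le j\le M_2$, and $|t_n^j|\to\infty$ for $M_2+1\le j\le M$. The argument of Lemma~\ref{l81} then localizes the ``critical mass/energy'': exactly as there, since every profile other than $v^1$ is a scattering solution one has $E(\psi^j)\ge0$ for $j\ge2$, so by the energy Pythagorean decomposition \eqref{6.2} there is $\tilde\lambda\ge\lambda_0$ with $M(\psi^1)^{(1-s_c)/s_c}E(\psi^1)/(M(Q)^{(1-s_c)/s_c}E(Q))=\omega_1\tilde\lambda^2-\omega_2\tilde\lambda^{N(p-1)/2}$, and since $v^1$ does not scatter, Theorem~\ref{t22} and Proposition~\ref{p21} force $\|\psi^1\|_2^{(1-s_c)/s_c}\|\nabla\psi^1\|_2\ge\tilde\lambda\|Q\|_2^{(1-s_c)/s_c}\|\nabla Q\|_2$. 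Repeating the $\tilde\lambda>\sigma_c$, $\tilde\lambda<\sigma_c$, $\tilde\lambda=\sigma_c$ trichotomy from the proof of Lemma~\ref{l81} — using the $H^1$ Pythagorean decomposition along the NLS flow, Lemma~\ref{l63}, applied at an appropriate time — one concludes that all inequalities in the Pythagorean expansions must be equalities, so $W_n^M\to0$ in $H^1$, $M(\psi^1)=M(Q)$, and $v^j\equiv0$ for every $j\ge2$. In other words the decomposition degenerates to the single profile $\phi_n=NLS(0)\psi^1(\cdot-x_n^1)+W_n^M$ with $W_n^M\to0$ in $H^1$.

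But $NLS(0)\psi^1=\psi^1$, so this says $\phi_n(\cdot+x_n^1)\to\psi^1$ in $H^1$; setting $x(t_n)=-x_n^1$ extracts a translated subsequence of $\phi_n$ converging in $H^1$, which contradicts the choice of $\{t_n\}$ as a sequence admitting no such convergence. This contradiction establishes precompactness of $K=\{u(t,\cdot-x(t)):t\ge0\}$; the path $x(t)$ on the separating sequence is then extended to all $t\ge0$ continuously using continuity of the flow and a covering argument (as in \cite{holmer10}), so that $x(\cdot)$ can be taken continuous. I expect the main obstacle to be the bookkeeping in the trichotomy on $\tilde\lambda$ — specifically, making sure the $H^1$-Pythagorean-along-the-flow identity \eqref{6.4} from Lemma~\ref{l63} is invoked at a valid time (one has global existence of $u_c$ and the a~priori bound $\|\nabla u_c(t)\|_2\le\sigma_c\|\nabla Q\|_2$, so the hypotheses of Lemma~\ref{l63} are met on every $[0,T]$) and squeezing out that \emph{all} remainder and subsidiary-profile contributions vanish; this is exactly the delicate step and mirrors the end of the proof of Lemma~\ref{l81}, so the present proof essentially reduces to citing that argument with $\phi_n=u(t_n)$ in place of $\phi_n=u_{n,0}$.
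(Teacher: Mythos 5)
Your proposal is correct and follows essentially the same route as the paper: reduce the lemma to showing that for every time sequence $t_n\to\infty$ the sequence $u(t_n)$ has a subsequence converging in $H^1$ modulo spatial translation (the reduction cited from \cite{nonradial}), then apply the profile reordering Lemma~\ref{l64} to $\phi_n=u(t_n)$ and rerun the trichotomy/Pythagorean argument of Lemma~\ref{l81} to force $W_n^M\to0$ in $H^1$ and $v^j\equiv0$ for $j\ge2$, leaving the single profile $\psi^1$ with $t_n^1=0$. The only cosmetic difference is that you phrase it as a proof by contradiction while the paper proves the sequential compactness directly; the substance is identical.
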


\begin{proof}
As is showed in \cite{nonradial} , it suffices to prove that for each sequence of times~$t_{n}\rightarrow\infty,$~
there exists a sequence~$x_{n}$~such that, by passing to a subsequence,  $u(t_{n},\cdot-x_{n})$ converges in  $H^{1}.$

Taking ~$\phi_{n}=u(t_{n})$~in Lemma~\ref{l64} and by definition of $u(t)=u_{c}(t),$
  similar to the proof of Lemma \ref{l81}, we obtain that
there exists at least one~$1\leq j\leq M_{1}$ with
$$E(\psi^{j})\leq\max(\lim_{n}E(\phi_{n}),0).$$
Without loss of generality, we can take~$j=1.$
Since, also
 $M(\psi^{1})\leq \lim_{n}M(\phi_{n})=M(Q),$
 there exist $\tilde{\lambda}\geq\lambda_{0}$
such that
$$\frac{M^{\frac{1-s_{c}}{s_{c}}}(\psi^{1})E(\psi^{1})}{M^{\frac{1-s_{c}}{s_{c}}}(Q)E(Q)}
=\omega_{1}\tilde{\lambda}^{2}-\omega_{2}\tilde{\lambda}^{\frac{N(p-1)}{2}}.$$
Note that by Lemma \ref{l64}, $v^{1}$ does not scatter, so
we must have~$\|\psi^{1}\|_{2}\|\nabla\psi^{1}\|_{2}\geq\tilde{\lambda}\|Q\|_{2}\|\nabla Q\|_{2}.$~\\
Then by the same way as in the proof of Lemma \ref{l81} , we get that
~$W_{n}^{M}(t_{k})\rightarrow0$~
in~$H^{1}$~ and~$ v^{j}\equiv0$~for all~$j\geq2.$~
 Since we know that~$W_{n}^{M}(t)$ is a scattering solution , this implies that
 \begin{equation}\label{8.2}
 W_{n}^{M}(0)=W_{n}^{M}\rightarrow0\ \ \ in\ \  H^{1}.
 \end{equation}
 Consequently, we have
 \begin{equation*}\label{}
u(t_{n})=NLS(-t_{n}^{1})\psi^{1}(x-x_{n}^{1})+W_{n}^{M}(x).
\end{equation*}
Note that by Lemma~\ref{l64},~ $t_{n}^{1}=0,$~and thus
 \begin{equation*}\label{}
u(t_{n},x+x_{n}^{1})=\psi^{1}(x)+W_{n}^{M}(x+x_{n}^{1}).
\end{equation*}
This equality and~\eqref{8.2}~imply our conclusion.
\end{proof}

Using the uniform-in-time ~$H^{1}$~concentration of~$u(t)=u_{c}(t)$~and by changing of variables,
we can easily get
\begin{corollary}\label{c92}
For each~$\epsilon>0,$~there exists~$R>0$~such that for all ~$t,$~
$$\|u(t,\cdot-x(t))\|_{H^{1}(|x|\geq R)}\leq \epsilon.$$
\end{corollary}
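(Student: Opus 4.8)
The plan is to obtain the uniform-in-time exterior smallness as a direct consequence of the $H^{1}$-precompactness of $K=\{u(t,\cdot-x(t)):t\ge 0\}$ proved in Lemma~\ref{l91}, by a standard finite-net argument. Fix $\epsilon>0$. Since $K$ is precompact in $H^{1}$, it is totally bounded, so there exist finitely many functions $f_{1},\dots,f_{m}\in H^{1}$ such that for every $t\ge 0$ there is an index $i=i(t)\in\{1,\dots,m\}$ with $\|u(t,\cdot-x(t))-f_{i}\|_{H^{1}(\mathbb{R}^{N})}\le \epsilon/2$.

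Next I would exploit that each $f_{i}$ belongs to $H^{1}(\mathbb{R}^{N})$: by dominated convergence, $\|f_{i}\|_{H^{1}(|x|\ge R)}\to 0$ as $R\to\infty$. Since there are only finitely many indices, one can choose a single radius $R=R(\epsilon)>0$ so large that $\|f_{i}\|_{H^{1}(|x|\ge R)}\le \epsilon/2$ holds for all $1\le i\le m$ simultaneously.

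Finally, for any $t\ge 0$, using the index $i=i(t)$ from the first step together with the elementary inequality $\|g\|_{H^{1}(|x|\ge R)}\le\|g\|_{H^{1}(\mathbb{R}^{N})}$, one obtains
\[
\|u(t,\cdot-x(t))\|_{H^{1}(|x|\ge R)}\le \|u(t,\cdot-x(t))-f_{i}\|_{H^{1}(|x|\ge R)}+\|f_{i}\|_{H^{1}(|x|\ge R)}\le \frac{\epsilon}{2}+\frac{\epsilon}{2}=\epsilon,
\]
which is exactly the assertion. There is no genuine obstacle here; the only points needing care are that the restriction of the $H^{1}$ norm to the exterior region $\{|x|\ge R\}$ is controlled by the full $H^{1}$ norm, so the $\epsilon/2$-net estimate transfers verbatim to the exterior, and that the finiteness of the net is precisely what permits a single radius $R$ to serve uniformly in $t$.
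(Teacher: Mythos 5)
Your proof is correct and is exactly the standard finite $\epsilon$-net argument that the paper implicitly relies on when it states that Corollary~\ref{c92} follows ``easily'' from the precompactness of $K$ in Lemma~\ref{l91}; the two key points you flag (uniform tail decay over a finite net, and domination of the exterior $H^{1}$ norm by the full one) are precisely what is needed.
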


With the localization property of $u_{c}$, we show, similar to \cite{holmer10}, that $u_{c}$ must blow up in finite time  using the same method as that
in the proof of Proposition~\ref{p32},
which contradicts the boundedness of~$u_{c}$~in~$H^{1}.$~ Hence,  $u_{c}$ cannot exist and
 $\sigma_{c}=\infty$. As is argued in section 7, this indeed completes the proof of Theorem \ref{th1}.

\appendix

\section{Nonzero Momentum}
Suppose that the solution~$u(x,t)$~with~$M(u)=M(Q),$~~$P(u)\neq0$.~Applying Galilean transform to~$u(x,t),$~we obtain
a new solution~$\tilde{u}(x,t)$:~
$$\tilde{u}(x,t)=e^{ix\cdot\xi_{0}}e^{-it|\xi_{0}|^{2}}u(x-2\xi_{0}t,t).$$
Take~$\xi_{0}=-\frac{P(u)}{M(u)}$~and we get
$$P(\tilde{u})=0,\ \ \ M(\tilde{u})=M(u)=M(Q),\ \ \  \|\nabla\tilde{u}\|_{2}^{2}=\|\nabla u\|_{2}^{2}-\frac{P(u)^{2}}{M(u)}$$
and
\begin{align*}
E(\tilde{u})=\frac{1}{2}\|\nabla u\|_{2}^{2}-\frac{1}{p+1}\|u\|_{p+1}^{p+1}+
\frac{M(u)}{2}\left(\xi_{0}+\frac{P(u)}{M(u)}\right)^{2}-\frac{P(u)^{2}}{2M(u)}=
E(u)-\frac{1}{2}\frac{P(u)^{2}}{M(u)}.
\end{align*}
Thus this choice of ~$\xi_{0}$~make ~$E(\tilde{u})$~attain its lowest value under any choice of~$\xi_{0}\in\mathbb{R}^{N}$.~
And as is stated in~\cite{holmer10},  ~$E(\tilde{u})<E(u)<E(Q)$~implies that we should always implement this transformation to maximize the applicability of
Proposition~\ref{p21}.~

Now what we should do is to show that if the dichotomy of Proposition~\ref{p21}~was already valid for~$u$,~then the selection of case~(1)~versus~(2)~
in Proposition~\ref{p21}~is preserved under the Galilean transformation.

Suppose~$M(u)=M(Q),~E(u)<E(Q)$~and~$P(u)\neq0$.~Define~$\tilde{u}(x,t)$~as above. Let~$\lambda_{-},\lambda$~be defined in terms of ~$E(u)$~
by~\eqref{2.6}~and~$\eta(t)$~in terms of~$u(t)$~by~\eqref{eta}.~Let$\tilde{\lambda}_{-},\tilde{\lambda}$~and~$\tilde{\eta}(t)$~
be the same quantities associated to~$\tilde{u}$.~

Firstly, suppose that case (1) of Proposition~\ref{p21}~holds for~$u$,~which in particular implies that
~$\eta(t)<1$~for all~$t$.~But clearly ~$\tilde{\eta}(t)<\eta(t)<1,$~thus, case (1) of Proposition~\ref{p21}~holds for~$\tilde{u}$~also.

Now conversely, suppose that case (1) of Proposition~\ref{p21}~holds for~$\tilde{u}$,~
then~$\tilde{\eta}(t)^{2}\leq\tilde{\lambda}_{-}^{2}$~for all~$t$.~We claim that
~$$\eta(t)^{2}=\frac{\|\nabla u\|_{2}^{2}}{\|\nabla Q\|_{2}^{2}}
=\tilde{\eta}(t)^{2}+\frac{P(u)^{2}}{2M(u)\|\nabla Q\|_{2}^{2}}
=\tilde{\eta}(t)^{2}+\frac{P(u)^{2}}{2\omega_{1}M(u)E(Q)}
\leq\lambda_{-}^{2}.
$$~
Indeed, this reduced to an algebraic problem now. Denote~$\alpha=\frac{E(u)}{E(Q)}$~and~$\beta=\frac{P(u)^{2}}{M(u)E(Q)}.$~\\
Then ~$\tilde{\lambda}_{-}$~is the smaller root of the equation:
$$\omega_{1}\tilde{\lambda}_{-}^{2}-\omega_{2}\tilde{\lambda}_{-}^{\frac{N(p-1)}{2}}
=\frac{E(\tilde{u})}{E(Q)}=\frac{E(u)}{E(Q)}-\frac{P(u)^{2}}{2M(u)E(Q)}=\alpha-\frac{\beta}{2},
$$
while ~$\lambda_{-}$~is the smaller root of
$$\omega_{1}\lambda_{-}^{2}-\omega_{2}\lambda_{-}^{\frac{N(p-1)}{2}}
=\frac{E(u)}{E(Q)}=\alpha.
$$
Let the function~$f(x)=\omega_{1}x-\omega_{2}x^{\frac{N(p-1)}{4}}.$~
Observe that the above claim follows if we could prove that
~$f(\tilde{\lambda}_{-}^{2}+\frac{\beta}{2\omega_{1}})\leq f(\lambda_{-}^{2}).$~
Equivalently, it suffices to show~$f(\tilde{\lambda}_{-}^{2}+\frac{\beta}{2\omega_{1}})\leq
f(\tilde{\lambda}_{-}^{2})+\frac{\beta}{2},$~or
\begin{align}\label{a}
f(\tilde{\lambda}_{-}^{2}+\frac{\beta}{2\omega_{1}})-
f(\tilde{\lambda}_{-}^{2})\leq\frac{\beta}{2}.
\end{align}
The left hand side of~\eqref{a}~is~$\frac{\beta}{2}-
\omega_{2}\left((\tilde{\lambda}_{-}^{2}+\frac{\beta}{2\omega_{1}})^{\frac{N(p-1)}{4}}-(\tilde{\lambda}_{-}^{2})^{\frac{N(p-1)}{4}}\right)$~
which is certainly no larger than~$\frac{\beta}{2}$~since ~$p-1>\frac{4}{N},$~and we conclude our claim.


\end{document}